\theoremstyle{break}
	\newtheorem{theorem}{Theorem}[section]
	\newtheorem{lemma}[theorem]{Lemma}
	\newtheorem{corollary}[theorem]{Corollary}
	\newtheorem{remark}[theorem]{Remark}
	\newtheorem{case}{Case}
	\newtheorem{definition}[theorem]{Definition}
	\newtheorem{assumption}{Assumption}
	\newcounter{parentnumber}
	\newenvironment{subtheorem}[1]
	{%
		\counterwithin*{#1}{parentnumber}
		\def\subtheoremcounter{#1}%
		\refstepcounter{#1}%
		\protected@edef\theparentnumber{\csname the#1\endcsname}%
		\setcounter{parentnumber}{\value{#1}}%
		\setcounter{#1}{0}%
		\expandafter\def\csname the#1\endcsname{\theparentnumber\alph{#1}}%
		\ignorespaces
	}{%
		\setcounter{\subtheoremcounter}{\value{parentnumber}}%
		\counterwithout*{\subtheoremcounter}{parentnumber} 
		\ignorespacesafterend
	}
\theoremstyle{nonumberplain}
        \newtheorem{proof}{Proof}
\numberwithin{equation}{section}
\newcommand{\RR}{\mathbb{R}}
\newcommand{\NN}{\mathbb{N}}
\newcommand{\ZZ}{\mathbb{Z}}
\renewcommand{\d}{\mathrm{d}}
\newcommand{\KAT}{\mathcal{T}}
\newcommand{\KAM}{\mathcal{M}}
\newcommand{\bignorm}[1]{\left\| {#1} \right\|}
\newcommand{\norm}[1]{\| {#1} \|}
\newcommand{\vtd}[2]{{\mathbf{VTD}_{#2}^{#1}}}
\newcommand{\CII}{\mathfrak{C}_0}
\newcommand{\CIIpointB}{\widetilde{\mathfrak{C}}_{0,1}}
\newcommand{\CIIpointtB}{\widetilde{\mathfrak{C}}_{0,2}}
\newcommand{\Csup}{\mathfrak{C}_1}
\newcommand{\Cpoint}{\mathfrak{C}_{1,1}}
\newcommand{\Cpointt}{\mathfrak{C}_{1,2}}
\newcommand{\CpointB}{\widetilde{\mathfrak{C}}_{1,1}}
\newcommand{\CpointtB}{\widetilde{\mathfrak{C}}_{1,2}}
\newcommand{\Cdiff}{\mathfrak{C}_2}
\newcommand{\Co}[1]{C{#1}}
\newcommand{\rIf}{r_\If}
\newcommand{\rIfII}{r_{\If}^{\mathscr{I}}}
\newcommand{\rIfIIi}[1]{r_{\If,{#1}}^{\mathscr{I}}}
\newcommand{\rIfInti}[1]{r_{\If,{#1}}^{\int}}
\newcommand{\rIfIIVar}{r_{\mathrm{var}}^{\mathscr{I}\!, \If}}
\newcommand{\rExII}{r_\mathrm{ex}^{\mathscr{I}}}
\newcommand{\rExIf}{r_\mathrm{ex}^\If}
\newcommand{\kIf}{k_\If}
\newcommand{\kII}{k_{\mathscr{I}}}
\newcommand{\kJJ}{k_{\mathcal{J}}}
\newcommand{\KIf}{K^\If}
\newcommand{\KTIf}{\widetilde{K}^\If}
\newcommand{\KII}{K^{\mathscr{I}}}
\newcommand{\KTII}{\widetilde{K}^{\mathscr{I}}}
\DeclareMathOperator{\df}{def}
\DeclareMathOperator{\rf}{rem}
\newcommand{\Id}{\mathrm{Id}}
\newcommand{\If}{\mathcal{I}}
\newcommand{\Ifhat}{\widehat{\mathcal{I}}}
\newcommand{\Ifapp}{\mathcal{I}^\mathrm{app}}
\newcommand{\II}{
	{\mathchoice
	{
		\raisebox{-0.3mm}{$\displaystyle \mathlarger{{{\mathscr{I}}}}_{\!n}$}
	}
	{
		\raisebox{-0.3mm}{$\mathlarger{{{\mathscr{I}}}}_{\!n}$}
	}
	{
		\mathscr{I}_n
	}
	{
		\mathscr{I}_n
	}}
}
\newcommand{\IIn}[1]{
	\II\!\left[#1\right]
}
\newcommand{\IIhat}{
	{\mathchoice
	{
		\raisebox{-0.3mm}{$\displaystyle \mathlarger{{{\widehat{\mathscr{I}}}}}$}
	}
	{
		\raisebox{-0.3mm}{$\mathlarger{{{\widehat{\mathscr{I}}}}}$}
	}
	{
		\widehat{\mathscr{I}}
	}
	{
		\widehat{\mathscr{I}}
	}}
}
\newcommand{\JJhat}[1][\If]{\widehat{\mathcal{J}}^{\mathscr{I}\!, {#1}}}
\newcommand{\JJn}[1][\If]{\mathcal{J}_n^{\mathscr{I}\!, {#1}}}
\newcommand{\JJ}[1][\If]{\mathcal{J}^{\mathscr{I}\!, {#1}}}
\newcommand{\PPhat}[1][\If]{\widehat{\mathcal{P}}^{\mathscr{I}\!, {#1}}}
\newcommand{\PPn}[1][\If]{\mathcal{P}_n^{\mathscr{I}\!, {#1}}}
\title{Unified Analysis for Variational Time Discretizations of Higher Order and
Higher Regularity Applied to Non-stiff ODEs}
\author{Simon Becher, Gunar Matthies \footnote{Institute of Numerical Mathematics,
Technical University of Dresden, Dresden 01062, Germany.
\mbox{e-mail:} Simon.Becher@tu-dresden.de, Gunar.Matthies@tu-dresden.de}}
\date{\today}
\begin{document}

	\maketitle
	
	\begin{abstract}
		We present a unified analysis for a family of variational time discretization methods,
		including discontinuous Galerkin methods and continuous Galerkin--Petrov methods,
		applied to non-stiff initial value problems. Besides the well-definedness of the
		methods, the global error and superconvergence properties are analyzed under
		rather weak abstract assumptions which also allow considerations of a wide
		variety of quadrature formulas. Numerical
		experiments illustrate and support the theoretical results.
	\end{abstract}
	
	\noindent \textit{AMS subject classification (2010):} 65L05, 65L20, 65L60
	
	\noindent \textit{Key words:}
	Discontinuous Galerkin, Continuous Galerkin--Petrov, Variational methods,
	Quadrature, Higher order, Superconvergence.

  \section{Introduction}
	
	We study variational time discretization methods applied to non-stiff
	initial value problems of the form
	\begin{equation}
		\label{initValueProb}
		u'(t) = f\big(t,u(t)\big),\qquad u(t_0) = u_0 \in \RR^d,
	\end{equation}
	where $f$ is supposed to be sufficiently smooth and to satisfy
	a Lipschitz condition on the second variable with constant $L > 0$.
	Here, an initial value $u_0$ is given at $t=t_0$. The system of ordinary
	differential equations (ODEs) is considered on a time interval
	$I=(t_0,t_0+T]$ with arbitrary but fixed length $T > 0$. 
	
	Probably the most popular variational discretization schemes
	for solving~\eqref{initValueProb} numerically are discontinuous
	Galerkin (dG) and continuous Galerkin--Petrov (cGP) methods.
	Both have been studied in the literature for many decades.
	However, an important part for the design of a then fully computable discretization,
	namely the use of quadrature methods for approximate integration, is often
	only marginally considered or the assumptions 
	on the quadrature formulas are quite restrictive and, thus,
	allow a small variability of quadrature formulas only.
	So, for example, the number of quadrature points is supposed to equal
	the dimension of the local ansatz space of the discrete solution,
	cf.~\cite{DD86,DHT81}, or the number of independent variational
	conditions, cf.~\cite{Hul72a,Hul72b}. Often also the investigations are
	restricted to special Gauss, Gauss--Radau, or Gauss--Lobatto formulas.
	A quite general setting is considered in~\cite{ES02} where so-called internal and
	external quadrature formulas are studied. External quadrature means that integrals
	over products of $f$ with test functions are approximated by quadrature
	rules. If $f$ is replaced by a polynomial approximation before (numerical)
	integration then internal quadrature formulas appear. Since in~\cite{ES02} 
	the dynamical behavior of the schemes is analyzed, it is assumed that at
	least for Dahlquist's test problem $u' = \lambda u$ all integrals are integrated
	exactly which again is quite restrictive.
	
	In this paper we try to keep the requirements on the approximation
	operators involved in the definition of the discrete method as low as possible.
	Our scope is to figure out how the approximation properties of these
	(external and internal) operators affect the error behavior
	of the numerical solution. Thereby our studies are not restricted to the
	dG or cGP method but cover the whole family of variational
	discretization methods recently introduced in~\cite{BMW19,BM21}.
	The used assumptions are quite general and abstract in order
	enable the study of a wide variety of methods. Especially, we allow that
	the right-hand side is approximated by a composition of various
	interpolation operators (interpolation cascade) or that the quadrature formulas
	also use derivative values. Therefore all variational methods considered
	in~\cite{BM21} and their convergence results can be verified by a
	rigorous and unified error analysis in the non-stiff case now.
	
	The paper is structured as follows. In Section~\ref{sec:formulation}
	the variational time discretization methods are formulated. The
	well-definedness of the method formulation is studied in
	Section~\ref{sec:existence} where the existence of unique discrete
	solutions is proven under appropriate conditions. Section~\ref{sec:error}
	is devoted to the error analysis. Here, at first, the global error is bounded
	by a sum of rather abstract approximation error terms. Then, in order
	to give a better insight into the error behavior, the convergence orders of these
	approximation errors are estimated in terms of basic quantities
	describing the approximation properties of the involved operators.
	Thereafter, superconvergence results in the time mesh nodes are derived in
	Section~\ref{sec:superconvergence}. Finally, in Section~\ref{sec:numerics}
	we use numerical experiments to illustrate the convergence behavior of
	the variational time discretization methods and show that the proven estimates
	are sharp. 
	
	Notation: In this paper $C$ denotes a generic constant independent of the
	time mesh interval length. To describe the vector-valued case ($d > 1$) in
	an easy way, let $(\cdot,\cdot)$ be the standard inner product and $\|\cdot\|$
	the Euclidean norm on $\RR^d$, $d \in \NN$. Besides let $e_j$, $1 \leq j \leq d$,
	be the $j$th standard unit vector in $\RR^d$.
	
	For an arbitrary interval $J$ and positive integers $q$, the spaces of continuous
	and $k$ times continuously differentiable $\RR^q$-valued functions
	on $J$ are written as $C(J,\RR^q)$ and $C^k(J,\RR^q)$, respectively.
	Furthermore, the space of square-integrable $\RR^q$-valued functions shall be
	denoted by $L^2(J,\RR^q)$ or, for convenience, sometimes also by $C^{-1}(J,\RR^q)$.
	For non-negative integers $s$, we write $P_s(J, \RR^q)$ for the space of $\RR^q$-valued
	polynomials of degree less than or equal to $s$. For $q=1$, we sometimes omit
	$\RR^q$. Further notation will be introduced later at the beginning of the
	sections where it is needed.

	\section{Formulation of the methods}
	\label{sec:formulation}
	
	The interval $I$ is decomposed by
	\begin{gather*}
		t_0 < t_1 < \dots < t_{N-1} < t_N = t_0+T
	\end{gather*}
	into $N$ disjoint subintervals $I_n := (t_{n-1},t_n]$, where $n = 1,\ldots, N$.
	Furthermore, we set
	\begin{gather*}
		\tau_n := t_n - t_{n-1}, \qquad
		\tau := \max_{1\le n\le N} \tau_n.
	\end{gather*}
	For convenience and to simplify the notation in some proofs, we assume $\tau_n \leq 1$
	which is not really a restriction since we are anyway
	interested in the asymptotic error behavior for $\tau \to 0$.
	For any piecewise continuous functions $v$ we define by
	\begin{gather*}
		v(t_n^+) := \lim_{t\to t_n+0} v(t),\qquad
		v(t_n^-) := \lim_{t\to t_n-0} v(t),\qquad
		[v]_{n} := v(t_n^+) - v(t_n^-)
	\end{gather*}
        the one-sided limits and the jump of $v$ at $t_n$.
	
	We now present a slight generalization of the variational time discretization
	methods $\vtd{r}{k}$ investigated in~\cite{BM21}. Let $r, k \in \ZZ$, $0 \leq k \leq r$.
	Then the local version ($I_n$-problem) of the numerical method reads as follows:
	
	Given $U(t_{n-1}^-)\in\RR^d$, find $U\in P_r(I_n,\RR^d)$ such that
	\begin{subequations}
	\label{eq:locProbGen}
	\begin{align}
		U(t_{n-1}^+)
			& = U(t_{n-1}^-),
			&& \text{if } k \geq 1, 
			\label{eq:locProbGenI} \\
		U^{(i+1)}(t_n^-)
			& = \frac{\d^i}{\d t^i} \Big(f\big(t,U(t)\big)\Big)\Big|_{t=t_n^-},
			&& \text{if } k \geq 2, \, i = 0, \ldots, \left\lfloor\tfrac{k}{2}\right\rfloor - 1,
			\label{eq:locProbGenE} \\
		U^{(i+1)}(t_{n-1}^+)
			& = \frac{\d^i}{\d t^i} \Big(f\big(t,U(t)\big)\Big)\Big|_{t=t_{n-1}^+},
			&& \text{if } k \geq 3, \, i = 0, \ldots, \left\lfloor\tfrac{k-1}{2}\right\rfloor - 1,
			\label{eq:locProbGenA}
	\end{align}
	and
	\begin{equation}
		\label{eq:locProbGenVar}
		\IIn{ \Big( U',\varphi\Big) } + \delta_{0,k}\Big(\big[U\big]_{n-1},\varphi(t_{n-1}^+)\Big) =
		\IIn{ \Big( \If_n f\big(\cdot ,U(\cdot)\big), \varphi \Big) }
		\qquad\forall \varphi\in P_{r-k}(I_n,\RR^d)
	\end{equation}
	\end{subequations}
	where $U(t_0^-) = u_0$ and $\delta_{i,j}$ is the Kronecker symbol.
	
	Here, $\II$ denotes an integrator that typically represents either the
	integral over $I_n$ or the application of a quadrature formula for approximate
	integration. Details will be described later on. Moreover, $\If_n$ could be used
	to model some projection/interpolation of $f$ or the usage of some special
	(internal) quadrature rules even if $\II$ is just the integral.

	\section{Existence and Uniqueness}
	\label{sec:existence}
	
	First of all, we agree that both $\II$ and $\If_n$ are local versions
	(obtained by transformation) of appropriate linear operators $\IIhat$ and $\Ifhat$
	given on the closure of the reference interval $(-1,1]$. However, $\II$ is an
	approximation of the integral operator while $\If_n$ approximates the identity operator.
	Thus, the transformations scale quite differently. More precisely, let
	\begin{gather}
		\label{def:timeTransfNew}
		T_n:(-1,1]\to I_n,\quad \hat{t}\mapsto \frac{t_n + t_{n-1}}{2}+\frac{\tau_n}{2} \hat{t},
	\end{gather}
	denote the affine transformation that maps the reference interval $(-1,1]$
	on an arbitrary mesh interval $I_n = (t_{n-1},t_n]$. Furthermore, let $\kII$
	and $\kIf$ be the smallest non-negative integers such that $\IIhat$ and $\Ifhat$
	are well-defined for functions in $C^{\kII}([-1,1])$ and $C^{\kIf}([-1,1])$,
	respectively. Then we have for all $\varphi \in C^{\kII}(\overline{I}_n,\RR^d)$
	and for all $\psi \in C^{\kIf}(\overline{I}_n,\RR^d)$ that
	\begin{gather*}
		\IIn{\varphi} = \IIhat\!\left[\varphi \circ T_n\right] \left(T_n\right)' 
			= \frac{\tau_n}{2} \IIhat\!\left[\varphi \circ T_n\right] \qquad \text{and} \qquad
		\If_n \psi = \big(\Ifhat (\psi \circ T_n)\big) \circ T_n^{-1}.
	\end{gather*}
	Of course, these operators act component-wise when applied to vector-valued functions.
	Moreover, we suppose for all non-negative integers $l$ that 
	$\Ifhat \hat{v} \in C^{l}([-1,1])$ holds for all $\hat{v} \in C^{\max\{\kIf,l\}}([-1,1])$,
	i.e., $\Ifhat \hat{v}$ is at least as smooth as $\hat{v}$.
	
	The study of existence and uniqueness of solutions to~\eqref{eq:locProbGen}
	as well as the later error analysis is strongly connected with the following
	operator.
	
	Let $\JJhat : C^{\kJJ+1}([-1,1]) \to P_r([-1,1])$ where
	$\kJJ := \max\left\{\left\lfloor \frac{k}{2} \right\rfloor-1,\,\kII,\,\kIf\right\}$
	be defined by
	\begin{subequations}
	\label{def:JJhat}
	\begin{align}
		\big(\JJhat \hat{v}\big)^{(i)}(-1^+)
			& = \hat{v}^{(i)}(-1^+), \qquad
			&& \text{if } k \geq 1, \, i = 0,\ldots,\left\lfloor \tfrac{k-1}{2} \right\rfloor\!, \\
		\big(\JJhat \hat{v}\big)^{(i)}(+1^-)
			& = \hat{v}^{(i)}(+1^-),
			&& \text{if } k \geq 2, \, i = 1,\ldots,\left\lfloor \tfrac{k}{2} \right\rfloor\!, \\
		& \hspace{-9em} \IIhat\Big[\big(\JJhat \hat{v}\big)' \widehat{\varphi}\Big] + \delta_{0,k} \JJhat \hat{v}(-1^+) \widehat{\varphi}(-1^+) \nonumber \\
		& \hspace{-7.5em} = \IIhat\Big[\big(\Ifhat (\hat{v}')\big) \widehat{\varphi}\Big] + \delta_{0,k} \hat{v}(-1^+) \widehat{\varphi}(-1^+)
			&& \forall \widehat{\varphi} \in P_{r-k}([-1,1]).
	\end{align}
	\end{subequations}
	
	\begin{assumption}
		\label{assIIhat}
		As before let $r,k \in \ZZ$, $0 \leq k \leq r$, be the parameters of the method.
		The integrator $\IIhat$ is such that
		$\widehat{\psi}\in P_{r-\max\{1,k\}}([-1,1])$ and
		\begin{gather*}
			\IIhat\Big[\big(1-\hat{t}\,\big)^{\left\lfloor \frac{k}{2} \right\rfloor}
				\big(1+\hat{t}\,\big)^{\left|\left\lfloor \frac{k-1}{2} \right\rfloor\right|}
				\widehat{\psi} \widehat{\varphi}\Big]
			= 0 \qquad \forall \widehat{\varphi} \in P_{r-\max\{1,k\}}([-1,1])
		\end{gather*}
		imply $\widehat{\psi} \equiv 0$. Note that the absolute value in the
		exponent is needed only for $k=0$.
	\end{assumption}
	
	\begin{lemma}
		\label{le:JJhatWellDef}
		Let $r,k \in \ZZ$, $0 \leq k \leq r$. Suppose that Assumption~\ref{assIIhat} holds. Then $\JJhat$
		given by~\eqref{def:JJhat} is well-defined. If $\Ifhat$ preserves polynomials
		up to degree $\tilde{r}$ then $\JJhat$ preserves polynomials up to degree $\min\{\tilde{r}+1,r\}$.
	\end{lemma}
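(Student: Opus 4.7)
The plan is to reduce well-definedness to uniqueness via a dimension count, then prove uniqueness by extracting polynomial factors from the vanishing boundary conditions and invoking Assumption~\ref{assIIhat}. The target space $P_r([-1,1])$ has dimension $r+1$, and the conditions in~\eqref{def:JJhat} supply $\lfloor (k-1)/2 \rfloor + 1$ equations at $-1$ (for $k \ge 1$), $\lfloor k/2 \rfloor$ equations at $+1$ (for $k \ge 2$), and $r-k+1$ equations from the variational form. Using $\lfloor (k-1)/2 \rfloor + \lfloor k/2 \rfloor = k-1$ for $k \ge 1$, these total $r+1$ in every case (including $k=0$, where only the variational block contributes), so the linear system is square. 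Thus it suffices to show that the only $\hat{W} \in P_r([-1,1])$ solving the homogeneous version of~\eqref{def:JJhat} corresponding to $\hat{v} \equiv 0$ is $\hat{W} \equiv 0$.

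For $k \ge 1$, the idea is to factor $\hat{W}'$ to match the structure in Assumption~\ref{assIIhat}. The vanishing $\hat{W}^{(i)}(-1^+) = 0$ for $i = 0, \dots, \lfloor (k-1)/2 \rfloor$ implies that $(1+\hat{t})^{\lfloor (k-1)/2 \rfloor + 1}$ divides $\hat{W}$, hence $(1+\hat{t})^{\lfloor (k-1)/2 \rfloor}$ divides $\hat{W}'$; similarly, for $k \ge 2$, the conditions at $+1$ give that $(1-\hat{t})^{\lfloor k/2 \rfloor}$ divides $\hat{W}'$. Writing $\hat{W}' = (1-\hat{t})^{\lfloor k/2 \rfloor} (1+\hat{t})^{\lfloor (k-1)/2 \rfloor} \widehat{\psi}$ with $\widehat{\psi} \in P_{r - \max\{1,k\}}([-1,1])$, the homogeneous variational equation reads $\IIhat\bigl[(1-\hat{t})^{\lfloor k/2 \rfloor} (1+\hat{t})^{\lfloor (k-1)/2 \rfloor} \widehat{\psi} \widehat{\varphi}\bigr] = 0$ for all $\widehat{\varphi} \in P_{r-k}([-1,1])$, so Assumption~\ref{assIIhat} forces $\widehat{\psi} \equiv 0$. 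Then $\hat{W}' \equiv 0$, and the boundary value $\hat{W}(-1^+) = 0$ closes the argument.

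The case $k=0$ requires a minor separate step because nothing a priori forces $\hat{W}$ to vanish at $-1$. The plan is to first restrict the test space to $\{\widehat{\varphi} \in P_r([-1,1]) : \widehat{\varphi}(-1^+) = 0\} = (1+\hat{t}) \, P_{r-1}([-1,1])$, which kills the jump term and reduces the variational equation to $\IIhat\bigl[(1+\hat{t}) \hat{W}' \widetilde{\varphi}\bigr] = 0$ for all $\widetilde{\varphi} \in P_{r-1}([-1,1])$. Since $|\lfloor -1/2 \rfloor| = 1$, this matches Assumption~\ref{assIIhat} with $\widehat{\psi} = \hat{W}' \in P_{r-1}([-1,1])$, giving $\hat{W}' \equiv 0$; testing the full equation with $\widehat{\varphi} \equiv 1$ then yields $\hat{W}(-1^+) = 0$, so $\hat{W} \equiv 0$.

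For the preservation claim, I would simply verify that any $\hat{v} \in P_{\min\{\tilde{r}+1,r\}}([-1,1])$ already satisfies the defining equations~\eqref{def:JJhat}: the interpolation conditions become tautologies, and the variational condition reduces to $\IIhat\bigl[(\hat{v}' - \Ifhat \hat{v}') \widehat{\varphi}\bigr] = 0$, which holds because $\hat{v}' \in P_{\min\{\tilde{r}, r-1\}}([-1,1]) \subseteq P_{\tilde{r}}([-1,1])$ is fixed by $\Ifhat$. The uniqueness part then gives $\JJhat \hat{v} = \hat{v}$. The only subtle point is making the exponents in the factorization of $\hat{W}'$ agree exactly with those in Assumption~\ref{assIIhat}, and seeing that for $k=0$ the absolute value in the exponent there is precisely the bookkeeping device that accommodates the missing left boundary condition via the test-space restriction described above.
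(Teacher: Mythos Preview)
Your proposal is correct and follows essentially the same route as the paper's proof: the same dimension count, the same factorization of $\hat{W}'$ via the boundary conditions for $k\ge 1$, the same $(1+\hat t)$-restricted test functions for $k=0$, and the same uniqueness-based verification of polynomial preservation. The only differences are cosmetic (order of cases, slightly more explicit justification of the preservation claim).
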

	\begin{proof}
		We have to study existence and uniqueness of the approximation. 
		We need to determine $r+1$ coefficients of a polynomial in
		$P_r([-1,1])$. To this we have
		\begin{gather*}
			\left(\left\lfloor \tfrac{k-1}{2}\right\rfloor + 1\right) + \left\lfloor \tfrac{k}{2} \right\rfloor + (r-k + 1)
				= \tfrac{k}{2} + \tfrac{k-1}{2} - \tfrac{1}{2} + 2 + r - k = r + 1
		\end{gather*}
		linear conditions. So, it remains to prove that for $\hat{v} \equiv 0$ we obtain $\JJhat \hat{v} \equiv 0$
		as uniquely defined approximation. The conditions with $\hat{v} \equiv 0$ read as follows
		\begin{subequations}
		\begin{align}
			\big(\JJhat \hat{v}\big)^{(i)}(-1^+)
				& = 0,
				&& \text{if } k \geq 1, \, i=0,\ldots,\left\lfloor \tfrac{k-1}{2} \right\rfloor\!, 
				\label{genhelp1} \\
			\big(\JJhat \hat{v}\big)^{(i)}(+1^-)
				& = 0,
				&& \text{if } k \geq 2, \, i=1,\ldots,\left\lfloor \tfrac{k}{2} \right\rfloor\!, 
				\label{genhelp2} \\
			\IIhat\Big[\big(\JJhat \hat{v}\big)' \widehat{\varphi}\Big] + \delta_{0,k}\JJhat \hat{v}(-1^+) \widehat{\varphi}(-1^+)
				& = 0
				&& \forall \widehat{\varphi} \in P_{r-k}([-1,1]).
				\label{genhelp3}
		\end{align}
		\end{subequations}
		
		Now, there are two cases. For $k=0$ testing in~\eqref{genhelp3} with
		$\widehat{\varphi} = (1+\hat{t}\,) \widetilde{\varphi}$, $\widetilde{\varphi} \in P_{r-1}([-1,1])$,
		we obtain $\IIhat\Big[\big(\JJhat \hat{v}\big)' (1+\hat{t}\,) \widetilde{\varphi}\Big] = 0$.
		So, by Assumption~\ref{assIIhat} we get $\big(\JJhat \hat{v}\big)' \equiv 0$. 
		Using this and choosing in~\eqref{genhelp3} the test function $\widehat{\varphi} \equiv 1$,
		it furthermore follows $\JJhat \hat{v}(-1^+) = 0$ which then yields
		$\JJhat \hat{v} \equiv 0$.
		
		Otherwise, for $k \geq 1$ we see from~\eqref{genhelp1}, \eqref{genhelp2} that
		$\big(\JJhat \hat{v}\big)'(\hat{t}\,) = \big(1-\hat{t}\,\big)^{\left\lfloor \frac{k}{2} \right\rfloor}
		\big(1+\hat{t}\,\big)^{\left\lfloor \frac{k-1}{2} \right\rfloor} \widehat{\psi}(\hat{t}\,)$
		with a polynomial $\widehat{\psi} \in P_{r-k}([-1,1])$. Because of Assumption~\ref{assIIhat}
		the variational condition~\eqref{genhelp3} implies that $\widehat{\psi} \equiv 0$.
		Thus, $\big(\JJhat \hat{v}\big)' \equiv 0$. Additionally using~\eqref{genhelp1}
		for $i=0$, it again follows that $\JJhat \hat{v} \equiv 0$.
		
		Hence, either way $\JJhat \hat{v} \equiv 0$ is the unique approximation of $\hat{v} \equiv 0$
		and so in general the approximation is always uniquely determined.
		
		Using the uniqueness of the approximation, the second statement can be easily verified.
	\end{proof}
	
	\begin{lemma}
		\label{le:normdefNew}
		Let $r\in\NN$ and $k\in\ZZ$ such that $0\leq k \leq r$.
		Suppose that Assumption~\ref{assIIhat} holds. Then the mapping
		\begin{gather}
			\label{def:normdef}
			\widehat{\psi} \mapsto \! \left(
				\sum_{i=0}^{\lfloor{\frac{k-1}{2}}\rfloor-1} \left\| \widehat{\psi}^{(i)}(-1^+) \right\|
				+ \sum_{i=0}^{\lfloor{\frac{k}{2}}\rfloor-1} \left\| \widehat{\psi}^{(i)}(+1^-)\right\|
				+ \sum_{i=\delta_{0,k}}^{r-k} \left\| \IIhat\Big[\widehat{\psi}(\,\hat{t}\,) \, (1+\hat{t}\,)^{i} \Big] \right\|
				\right)
		\end{gather}
		defines a norm on $P_{r-1}([-1,1],\RR^d)$.
	\end{lemma}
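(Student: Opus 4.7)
The plan is to check the four norm axioms, of which the first three (non-negativity, absolute homogeneity, triangle inequality) are immediate because the mapping is a sum of Euclidean norms of linear images of $\widehat\psi$. So the entire substance lies in definiteness: showing that if the expression in~\eqref{def:normdef} vanishes, then $\widehat\psi\equiv 0$. Since all three sums act component-wise in $\RR^d$, it suffices to argue for $d=1$.

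Suppose the sum vanishes. Then
\begin{gather*}
\widehat{\psi}^{(i)}(-1^+) = 0 \quad (i = 0,\ldots, \lfloor (k-1)/2\rfloor - 1),\qquad
\widehat{\psi}^{(i)}(+1^-) = 0 \quad (i = 0,\ldots, \lfloor k/2 \rfloor - 1),
\end{gather*}
and $\IIhat[\widehat\psi(\hat t)(1+\hat t)^i] = 0$ for $i = \delta_{0,k},\ldots, r-k$. Since $\{(1+\hat t)^i\}_{i=0}^{r-k}$ is a basis of $P_{r-k}([-1,1])$ and $\{(1+\hat t)^i\}_{i=1}^{r}$ spans $(1+\hat t)P_{r-1}([-1,1])$, linearity turns the last set of conditions into statements against full test spaces of polynomials. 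A direct count (of the same form as in the proof of Lemma~\ref{le:JJhatWellDef}) shows we have precisely $r=\dim P_{r-1}([-1,1])$ conditions, so there is hope that this is enough to force $\widehat\psi\equiv 0$.

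I would then split into two cases, mirroring the case distinction already used in Lemma~\ref{le:JJhatWellDef}. For $k=0$ the endpoint sums are empty, and the integrator conditions give $\IIhat[(1+\hat t)\widehat\psi\widehat\varphi] = 0$ for all $\widehat\varphi\in P_{r-1}([-1,1])$; this is exactly the hypothesis of Assumption~\ref{assIIhat} with $k=0$ (where $(1-\hat t)^{0}(1+\hat t)^{1} = 1+\hat t$ and $r - \max\{1,k\} = r-1$), so it yields $\widehat\psi\equiv 0$. For $k\ge 1$ the vanishing boundary derivatives force a factorization
$$\widehat\psi(\hat t) = (1-\hat t)^{\lfloor k/2\rfloor}\,(1+\hat t)^{\lfloor (k-1)/2\rfloor}\,\widetilde\psi(\hat t),$$
where by degree count $\deg\widetilde\psi \le (r-1) - (k-1) = r-k$, i.e., $\widetilde\psi\in P_{r-k}([-1,1])$. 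Substituting this into the integrator conditions $\IIhat[\widehat\psi\widehat\varphi]=0$ for $\widehat\varphi\in P_{r-k}([-1,1])$ and invoking Assumption~\ref{assIIhat} (now with $r-\max\{1,k\}=r-k$) gives $\widetilde\psi\equiv 0$, hence $\widehat\psi\equiv 0$.

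The main obstacle, modest as it is, is bookkeeping: verifying that the degrees and test-space dimensions line up correctly in the factorization step of the $k\ge 1$ case, and handling the $k=0$ subtlety that the test set $\{(1+\hat t)^i\}_{i=1}^{r}$ is shifted and coincides with $(1+\hat t)\cdot P_{r-1}$ rather than with $P_{r-1}$ itself — which is precisely what Assumption~\ref{assIIhat} is designed to absorb through the $(1+\hat t)^{|\lfloor (k-1)/2\rfloor|}$ factor whose absolute value becomes active only for $k=0$.
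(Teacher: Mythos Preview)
Your proposal is correct and follows essentially the same approach as the paper: the paper's own proof simply says that absolute homogeneity and the triangle inequality are clear, and for positive definiteness one should ``adapt the arguments that are used in the proof of Lemma~\ref{le:JJhatWellDef} to show $\big(\JJhat[\Id] \hat{v}\big)' \equiv 0$ if $\hat{v} \equiv 0$'' --- which is exactly the case split ($k=0$ versus $k\ge 1$) and factorization argument you have spelled out in detail. In effect you have unpacked what the paper leaves as a one-line reference.
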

	\begin{proof}
		The absolute homogeneity and the triangle inequality follow from the linearity
		of the integrator and of the derivatives along with the respective properties
		of the norm $\| \cdot \|$. Finally, to show the positive definiteness, we can
		adapt the arguments that are used in the proof of Lemma~\ref{le:JJhatWellDef}
		to show $\big(\JJhat[\Id] \hat{v}\big)' \equiv 0$ if $\hat{v} \equiv 0$.
		Note that precisely those $r$ conditions of~\eqref{def:JJhat} that uniquely
		define $\big(\JJhat[\Id] \hat{v}\big)'$ also appear on the right-hand side of the
		mapping~\eqref{def:normdef}.
	\end{proof}
	
	\begin{lemma}
		\label{le:normdefDG}
		Let $r \in \NN_0$. Suppose that Assumption~\ref{assIIhat} holds.
		Then the mapping
		\begin{gather}
			\label{def:normdefDG}
			\widehat{\psi} \mapsto 
				\sum_{i=0}^{r} \left\| \IIhat\Big[\widehat{\psi}'(\,\hat{t}\,) \, (1+\hat{t}\,)^{i} \Big]
					+ \delta_{0,i} \widehat{\psi}(-1^+)\right\|
		\end{gather}
		defines a norm on $P_{r}([-1,1],\RR^d)$. 
	\end{lemma}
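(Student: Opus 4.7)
The plan is to mirror the strategy used for Lemma~\ref{le:normdefNew}. Absolute homogeneity and the triangle inequality are immediate from the linearity of $\IIhat$, of the differentiation operator, and of point evaluation, combined with the corresponding properties of $\|\cdot\|$ on $\RR^d$. All the work therefore lies in verifying positive definiteness, i.e.\ showing that if every summand vanishes then $\widehat{\psi}\equiv 0$. This is where I would invoke Assumption~\ref{assIIhat} in the special case $k=0$, which is exactly tuned for the dG setting (the factor $(1+\hat t)$ appears on account of the convention $|\lfloor -1/2\rfloor|=1$).

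Assume the whole sum is zero. For each $i\in\{1,\dots,r\}$ we then have $\IIhat\big[\widehat{\psi}'(\hat t)(1+\hat t)^{i}\big]=0$, and for $i=0$ we get $\IIhat[\widehat{\psi}'(\hat t)] + \widehat{\psi}(-1^+) = 0$. Since $\{(1+\hat t)^{i-1}\}_{i=1}^{r}$ is a basis of $P_{r-1}([-1,1])$, the first batch of conditions rewrites as
\begin{equation*}
\IIhat\big[(1+\hat t)\,\widehat{\psi}'(\hat t)\,\widehat{\varphi}(\hat t)\big]=0 \qquad\forall \widehat{\varphi}\in P_{r-1}([-1,1]).
\end{equation*}
Because $\widehat{\psi}\in P_{r}([-1,1],\RR^d)$, its derivative satisfies $\widehat{\psi}'\in P_{r-1}([-1,1],\RR^d)=P_{r-\max\{1,0\}}([-1,1],\RR^d)$. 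Applying Assumption~\ref{assIIhat} with $k=0$ component-wise to $\widehat{\psi}'$ then forces $\widehat{\psi}'\equiv 0$, so $\widehat{\psi}$ is a constant vector.

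Feeding $\widehat{\psi}'\equiv 0$ back into the $i=0$ condition yields $\widehat{\psi}(-1^+)=0$; being constant, $\widehat{\psi}$ must vanish identically. The degenerate case $r=0$ is immediate: the only term in the sum is $\|\widehat{\psi}(-1^+)\|$ for the constant $\widehat{\psi}$, whose vanishing forces $\widehat{\psi}\equiv 0$. The main (mild) obstacle is the bookkeeping around the index shift $i\mapsto i-1$ that converts the test family $\{(1+\hat t)^i\}_{i=1}^r$ into a basis of $P_{r-1}$, together with the observation that the jump contribution in the $i=0$ summand plays precisely the role of the $\delta_{0,k}$-term in~\eqref{def:JJhat}, which is what allows Assumption~\ref{assIIhat} to be invoked in its $k=0$ incarnation.
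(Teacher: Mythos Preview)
Your proof is correct and follows essentially the same route as the paper: absolute homogeneity and the triangle inequality via linearity, and positive definiteness by the $k=0$ case of the argument in Lemma~\ref{le:JJhatWellDef} (test with $(1+\hat t)\widetilde\varphi$ to invoke Assumption~\ref{assIIhat}, conclude $\widehat\psi'\equiv 0$, then use the $i=0$ summand to kill the constant). The paper's own proof is a two-line reference to exactly these ingredients, and you have simply spelled out the details it leaves implicit.
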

	\begin{proof}
		As in the proof of Lemma~\ref{le:normdefNew}, the absolute homogeneity
		and the triangle inequality are clear. Moreover, by an adaption of the
		$k=0$ case of the proof of Lemma~\ref{le:JJhatWellDef} we get the
		positive definiteness.
	\end{proof}
	
	Recall from~\eqref{def:timeTransfNew} the affine transformation $T_n$
	mapping the reference interval $(-1,1]$ on an arbitrary mesh interval $I_n = (t_{n-1},t_n]$.
	Then, of course, any $\varphi \in P_{r}(I_n,\RR^d)$ is associated with a
	polynomial $\widehat{\varphi}\in P_{r}\big((-1,1],\RR^d\big)$ by
	\begin{gather*}
		\widehat{\varphi}(\,\hat{t}\,) := \big(\varphi \circ T_n \big)(\,\hat{t}\,)
			= \varphi\big(T_n(\,\hat{t}\,)\big)
		\qquad\forall \hat{t}\in(-1,1].
	\end{gather*}
	The chain rule yields
	\begin{gather*}
		\widehat{\varphi}'(\,\hat{t}\,) = \varphi'(t) T_n'(\,\hat{t}\,) = \frac{\tau_n}{2} \varphi'(t)
		\quad \text{with} \quad t = T_n(\,\hat{t}\,) = \frac{t_n + t_{n-1}}{2}+\frac{\tau_n}{2} \hat{t},
	\end{gather*}
	where $\widehat{\varphi}$ and $T_n$ are differentiated with respect to $\hat{t}$
	and $\varphi$ is differentiated with respect to $t$. The inverse of $T_n$
        and its derivative are given by
	\begin{gather*}
		T_n^{-1}(t) = \frac{2t-(t_n+t_{n-1})}{\tau_n}, \qquad
		\left(T_n^{-1}\right)'(t) = \frac{2}{\tau_n}
	\end{gather*}
	for all $t \in I_n$.
	
	\begin{lemma}
		\label{le:normIneq}
		Let $r,k \in \ZZ$, $0\leq k \leq r$. Suppose that Assumption~\ref{assIIhat} holds. 
		Then there is a positive constant $\kappa_{r,k}$, independent of $\tau_n$, such that
		\begin{align*}
			& \int_{I_n} \big\|\varphi'(t)\big\| \, \d t \leq \tau_n \sup_{t \in I_n} \big\|\varphi'(t)\big\| \\
			& \quad \leq \kappa_{r,k} \Bigg(
					\sum_{i=1}^{\lfloor{\frac{k-1}{2}}\rfloor} \!\!\!\left(\tfrac{\tau_n}{2}\right)^{i} \big\| \varphi^{(i)}(t_{n-1}^+)\big\|
					+ \sum_{i=1}^{\lfloor{\frac{k}{2}}\rfloor} \!\left(\tfrac{\tau_n}{2}\right)^{i} \big\| \varphi^{(i)}(t_n^-)\big\|
				+ \sum_{i=\delta_{0,k}}^{r-k} \left\| \IIn{\varphi'(t) \, (1+T_n^{-1}(t))^i } \right\|\!
				\Bigg)
		\end{align*}
		and
		\begin{gather*}
			\sup_{t \in I_n} \big\|\varphi(t)\big\|
			\leq \min\left\{\left\| \varphi(t_{n-1}^+) \right\|, \left\| \varphi(t_{n}^-) \right\| \right\}
				+ \int_{I_n} \big\|\varphi'(t)\big\| \, \d t
		\end{gather*}
		for all $\varphi \in P_{r}(I_n,\RR^d)$.
	\end{lemma}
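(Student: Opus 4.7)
The plan is to prove the two claims separately, with the main inequality pulled back to the reference interval $(-1,1]$ via the affine map $T_n$.

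The bound $\sup_{t\in I_n}\|\varphi(t)\|\leq \min\{\|\varphi(t_{n-1}^+)\|,\|\varphi(t_n^-)\|\} + \int_{I_n}\|\varphi'(t)\|\,\d t$ follows directly from the fundamental theorem of calculus: for every $t\in I_n$ and every $s\in\{t_{n-1}^+,t_n^-\}$ one has $\|\varphi(t)\|\leq \|\varphi(s)\| + \|\int_s^t \varphi'(\sigma)\,\d\sigma\| \leq \|\varphi(s)\| + \int_{I_n}\|\varphi'(\sigma)\|\,\d\sigma$; choosing the smaller of the two endpoint values and taking the supremum over $t$ gives the claim.

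For the main inequality the left half $\int_{I_n}\|\varphi'\|\,\d t \leq \tau_n\sup_{I_n}\|\varphi'\|$ is immediate, so the real task is to bound $\tau_n\sup_{I_n}\|\varphi'\|$. I set $\widehat{\varphi}:=\varphi\circ T_n\in P_r([-1,1],\RR^d)$ and $\widehat{\psi}:=\widehat{\varphi}'\in P_{r-1}([-1,1],\RR^d)$. By Lemma~\ref{le:normdefNew} the expression~\eqref{def:normdef} defines a norm on the finite-dimensional space $P_{r-1}([-1,1],\RR^d)$, so by equivalence of norms there exists a constant $\hat\kappa_{r,k}>0$, depending only on $r$, $k$, $\Ifhat$ and $\IIhat$ but not on $\varphi$ or $\tau_n$, such that $\sup_{\hat t\in[-1,1]}\|\widehat{\psi}(\hat t)\|$ is at most $\hat\kappa_{r,k}$ times the value of~\eqref{def:normdef} at $\widehat{\psi}$. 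Combined with the identity $\tau_n\sup_{I_n}\|\varphi'\| = 2\sup_{[-1,1]}\|\widehat{\varphi}'\| = 2\sup_{[-1,1]}\|\widehat{\psi}\|$, it then suffices to rewrite each term in~\eqref{def:normdef} as a quantity on $I_n$.

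This last step is pure bookkeeping of scaling factors. The chain rule gives $\widehat{\psi}^{(i)}(-1^+) = (\tau_n/2)^{i+1}\varphi^{(i+1)}(t_{n-1}^+)$ and $\widehat{\psi}^{(i)}(+1^-) = (\tau_n/2)^{i+1}\varphi^{(i+1)}(t_n^-)$, so after re-indexing $j=i+1$ the two boundary-derivative sums in~\eqref{def:normdef} match exactly the corresponding sums on the right-hand side of the lemma. For the integrator terms I use $\widehat{\varphi}'(\hat t) = (\tau_n/2)\varphi'(T_n(\hat t))$ together with the defining relation $\IIhat[g\circ T_n] = (2/\tau_n)\IIn{g}$; these yield $\IIhat[\widehat{\psi}(\hat t)(1+\hat t)^i] = \IIn{\varphi'(t)(1+T_n^{-1}(t))^i}$, with the factors of $\tau_n/2$ cancelling exactly. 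Setting $\kappa_{r,k}:=2\hat\kappa_{r,k}$ then produces the asserted inequality. The only real obstacle is keeping the scaling factors straight so that $\kappa_{r,k}$ remains independent of $\tau_n$; the nontrivial mathematical input, that~\eqref{def:normdef} actually is a norm, has already been supplied by Lemma~\ref{le:normdefNew}.
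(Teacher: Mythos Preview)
Your proof is correct and follows essentially the same approach as the paper: the fundamental theorem of calculus for the second inequality, and norm equivalence on $P_{r-1}([-1,1],\RR^d)$ via Lemma~\ref{le:normdefNew} together with the affine transformation $T_n$ for the first. The paper additionally disposes of the trivial case $r=0$ (where $\varphi'\equiv 0$ and Lemma~\ref{le:normdefNew}, which requires $r\in\NN$, does not apply) before invoking the norm equivalence, but otherwise the arguments coincide.
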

	\begin{proof}
		Since the statement is obviously true for $r=0$, we can assume $r\ge 1$ in the following.
		By the fundamental theorem of calculus we have for $t \in I_n$ that
		\begin{gather*}
			\varphi(t) = \varphi(t_{n-1}^+) + \int_{t_{n-1}}^t \varphi'(s) \, \d s
				= \varphi(t_{n}^-) - \int_{t}^{t_n} \varphi'(s) \, \d s.
		\end{gather*}
		Therefore, it follows that
		\begin{subequations}
		\begin{align*}
			\sup_{t\in I_n} \big\|\varphi(t)\big\|
				& \leq \big\|\varphi(t_{n-1}^+)\big\| + \sup_{t \in I_n} \bigg\|\int_{t_{n-1}}^t \varphi'(s) \, \d s \,\bigg\|
				\leq \big\|\varphi(t_{n-1}^+)\big\| + \int_{I_n} \big\| \varphi'(s) \big\| \, \d s
			\shortintertext{and analogously}
			\sup_{t\in I_n} \big\|\varphi(t)\big\|
				& \leq \big\|\varphi(t_{n}^-)\big\| + \int_{I_n} \big\| \varphi'(s) \big\| \, \d s
		\end{align*}
		\end{subequations}
		which already gives the second statement.
		
		On the finite dimensional function space $P_{r-1}\big((-1,1],\RR^d\big)$, both
		\begin{align*}
			\widehat{\psi} \mapsto \sup_{\hat{t} \in (-1,1]} \big\|\widehat{\psi}(\,\hat{t}\,)\big\|
			\qquad\text{and}\qquad
			\eqref{def:normdef}
		\end{align*}
		define equivalent norms. Hence, for all $\widehat{\psi} \in P_{r-1}\big((-1,1],\RR^d\big)$ we have
		\begin{align*}
			\sup_{\hat{t} \in (-1,1]} \big\|\widehat{\psi}(\,\hat{t}\,)\big\|
			\leq \frac{\kappa_{r,k}}{2} \left(
				\sum_{i=0}^{\lfloor{\frac{k-1}{2}}\rfloor-1} \left\| \widehat{\psi}^{(i)}(-1^+) \right\|
				+ \sum_{i=0}^{\lfloor{\frac{k}{2}}\rfloor-1} \left\| \widehat{\psi}^{(i)}(+1^-)\right\|
				+ \sum_{i=\delta_{0,k}}^{r-k} \left\| \IIhat\Big[\widehat{\psi}(\,\hat{t}\,) \, (1+\hat{t}\,)^{i} \Big] \right\|
				\right)
		\end{align*}
		where $\kappa_{r,k}$ is a positive constant that is independent of $\tau_n$.
		
		So, using the transformation $T_n$ given in~\eqref{def:timeTransfNew}, the last estimate
		together with integral transformations (which analogously also hold for the integrator)
		and the chain rule imply that
		\begin{align*}
			& \int_{I_n} \big\|\varphi'(t)\big\| \, \d t
				\leq \tau_n \sup_{t \in I_n} \big\| \varphi'(t) \big\|
				= \tau_n \sup_{\hat{t} \in (-1,1]} \big\| \tfrac{2}{\tau_n} \widehat{\varphi}'(\,\hat{t}\,) \big\|
				= 2 \sup_{\hat{t} \in (-1,1]} \big\| \widehat{\varphi}'(\,\hat{t}\,) \big\| \\
			& \quad \leq {\kappa}_{r,k} \!\left(
				\sum_{i=1}^{\lfloor{\frac{k-1}{2}}\rfloor} \left\| \widehat{\varphi}^{(i)}(-1^+) \right\|
				+ \sum_{i=1}^{\lfloor{\frac{k}{2}}\rfloor} \left\| \widehat{\varphi}^{(i)}(+1^-)\right\|
				+ \sum_{i=\delta_{0,k}}^{r-k} \left\| \IIhat\Big[\widehat{\varphi}'(\,\hat{t}\,) \, (1+\hat{t}\,)^{i} \Big] \right\|
				\right) \\
			& \quad = {\kappa}_{r,k} \!\left(
				\sum_{i=1}^{\lfloor{\frac{k-1}{2}}\rfloor} \!\left(\tfrac{\tau_n}{2}\right)^{i} \left\| \varphi^{(i)}(t_{n-1}^+)\right\|
				+ \sum_{i=1}^{\lfloor{\frac{k}{2}}\rfloor} \!\left(\tfrac{\tau_n}{2}\right)^{i} \left\| \varphi^{(i)}(t_n^-)\right\|
				+ \sum_{i=\delta_{0,k}}^{r-k} \left\| \IIn{\varphi'(t) \, (1+T_n^{-1}(t))^i } \right\|
				\right)
		\end{align*}
		for all $\varphi \in P_{r}\big(I_n,\RR^d\big)$. This completes the proof.
	\end{proof}
	
	\begin{lemma}
		\label{le:normIneqDG}
		Let $r \in \NN_0$. Suppose that Assumption~\ref{assIIhat} holds. 
		Then there is a positive constant $\kappa_{r}$, independent of $\tau_n$, such that
		\begin{gather*}
			\sup_{t \in I_n} \big\|\varphi(t)\big\|
			\leq \kappa_{r} \sum_{i=0}^{r} \left\| \IIn{\varphi'(t) \, (1+T_n^{-1}(t))^i } + \delta_{0,i} \varphi(t_{n-1}^+)\right\|
		\end{gather*}
		for all $\varphi \in P_{r}(I_n,\RR^d)$.
	\end{lemma}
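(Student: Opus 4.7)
The plan is to mirror the proof of Lemma~\ref{le:normIneq}, replacing the role of Lemma~\ref{le:normdefNew} by Lemma~\ref{le:normdefDG}. First I would invoke Lemma~\ref{le:normdefDG} to conclude that the expression in~\eqref{def:normdefDG} defines a norm on the finite-dimensional space $P_r([-1,1],\RR^d)$. Since the sup-norm $\widehat{\psi}\mapsto \sup_{\hat{t}\in(-1,1]}\|\widehat{\psi}(\,\hat{t}\,)\|$ is another norm on the same space, the two are equivalent, and there exists a positive constant $\kappa_r$ (depending only on $r$ and on $\IIhat$) such that
\[
    \sup_{\hat{t}\in(-1,1]} \big\|\widehat{\varphi}(\,\hat{t}\,)\big\|
    \leq \kappa_r \sum_{i=0}^{r}
    \Big\|\IIhat\!\big[\widehat{\varphi}'(\,\hat{t}\,)\,(1+\hat{t}\,)^i\big] + \delta_{0,i}\,\widehat{\varphi}(-1^+)\Big\|
\]
holds for all $\widehat{\varphi}\in P_r([-1,1],\RR^d)$.

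The remaining task is to transport this reference-interval estimate to $I_n$ via the affine map $T_n$ from~\eqref{def:timeTransfNew}. Setting $\widehat{\varphi} = \varphi \circ T_n$ for an arbitrary $\varphi \in P_r(I_n,\RR^d)$, the chain rule yields $\widehat{\varphi}'(\,\hat{t}\,) = \tfrac{\tau_n}{2}\,\varphi'(T_n(\,\hat{t}\,))$, while the definition of $\II$ gives $\IIn{\psi}=\tfrac{\tau_n}{2}\,\IIhat[\psi\circ T_n]$. The two factors $\tfrac{\tau_n}{2}$ cancel against each other, so that
\[
    \IIhat\!\big[\widehat{\varphi}'(\,\hat{t}\,)\,(1+\hat{t}\,)^i\big]
    = \IIn{\varphi'(t)\,\big(1+T_n^{-1}(t)\big)^i}
    \quad \text{and} \quad
    \widehat{\varphi}(-1^+) = \varphi(t_{n-1}^+).
\]
Together with the obvious identity $\sup_{\hat{t}\in(-1,1]}\|\widehat{\varphi}(\,\hat{t}\,)\|=\sup_{t\in I_n}\|\varphi(t)\|$, this turns the reference-interval estimate into the claimed bound with the very same constant $\kappa_r$.

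I do not anticipate any real obstacle; the argument is essentially a routine norm-equivalence on the reference interval combined with the scaling identities above. The only subtlety worth highlighting is the $\tau_n$-independence of $\kappa_r$, which follows automatically from the cancellation of the $\tfrac{\tau_n}{2}$ factors between the derivative and the integrator scaling — exactly the same mechanism that makes the analogous bound in Lemma~\ref{le:normIneq} uniform in $\tau_n$.
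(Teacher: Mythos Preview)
Your proposal is correct and follows exactly the approach sketched in the paper's own proof: invoke Lemma~\ref{le:normdefDG}, use norm equivalence on the finite-dimensional reference space $P_r([-1,1],\RR^d)$, and transfer to $I_n$ via $T_n$, with the $\tfrac{\tau_n}{2}$ factors from the chain rule and the integrator scaling cancelling to give a $\tau_n$-independent constant. In fact you have written out more detail than the paper does, which merely says the argument is analogous to that of Lemma~\ref{le:normIneq}.
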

	\begin{proof}
		Similar to the proof of Lemma~\ref{le:normIneq} the statement follows
		from Lemma~\ref{le:normdefDG} using the norm equivalence on the finite
		dimensional space $P_{r}\big((-1,1],\RR^d\big)$ along with transformations
		via $T_n$ given in~\eqref{def:timeTransfNew} between the reference interval $(-1,1]$
		and the actual interval $I_n$.
	\end{proof}
	
	In the following, we assume that the inverse inequalities
	\begin{gather}
		\label{ieq:invIneq}
		\sup_{t \in I_n}\big\| V^{(l)}(t) \big\| \leq C_\mathrm{inv} \left(\tfrac{\tau_n}{2}\right)^{-l} \sup_{t \in I_n} \big\| V(t) \big\|
	        \qquad \forall V\in P_r(I_n,\RR^d),\: l\in\NN,
	\end{gather}
	hold true where $C_\mathrm{inv} > 0$ is independent of $\tau_n$ but
	may depend on $l$ and $r$. The proof is standard and uses transformations to
	together with norm equivalences on finite dimensional spaces on the reference interval.
	
	For the proof of the existence and uniqueness of solutions of~\eqref{eq:locProbGen}
	we need some more assumptions.
	
	\begin{assumption}
		\label{assSupII}
		The reference integrator $\IIhat$ satisfies
		\begin{equation*}
			\left\| \IIhat\!\left[\widehat{\varphi}\right] \right\|
				\leq \CII \sum_{j=0}^{\kII} \sup_{\hat{t} \in [-1,1]} \big\| \widehat{\varphi}^{(j)}(\,\hat{t}\,)\big\|
			\qquad \forall \widehat{\varphi} \in C^{\kII}([-1,1],\RR^d)
		\end{equation*}
		where as before $\kII \geq 0$ is the smallest integer such that $\IIhat$
		is well-defined on $C^{\kII}([-1,1])$. This means by transformation that
		\begin{equation*}
			\left\| \IIn{ \varphi }\right\|
				\leq \CII \frac{\tau_n}{2} \sum_{j=0}^{\kII} \left(\tfrac{\tau_n}{2}\right)^j
					\sup_{t \in I_n} \big\| \varphi^{(j)}(t)\big\|
			\qquad \forall \varphi \in C^{\kII}(\overline{I}_n,\RR^d)
		\end{equation*}
		holds for the local integrators $\II$, $1 \leq n \leq N$.
	\end{assumption}
	
	\begin{assumption}
		\label{assSup}
		Let $0 \leq l \leq \kII$. The reference approximation operator
		$\Ifhat$ satisfies
		\begin{equation*}
			\sup_{\hat{t} \in [-1,1]} \big\| (\Ifhat \widehat{\varphi} )^{(l)}(\,\hat{t}\,)\big\|
				\leq \Csup \sum_{j=0}^{\max\{\kIf,l\}}
					\sup_{\hat{t} \in [-1,1]} \big\| \widehat{\varphi}^{(j)}(\,\hat{t}\,)\big\|
			\qquad \forall \widehat{\varphi} \in C^{\max\{\kIf,l\}}([-1,1],\RR^d)
		\end{equation*}
		where as before $\kIf \geq 0$ is the smallest integer such that $\Ifhat$
		is well-defined on $C^{\kIf}([-1,1])$.
		This means by transformation that
		\begin{equation*}
			\sup_{t \in I_n} \big\| (\If_n \varphi )^{(l)}(t)\big\|
				\leq \Csup \sum_{j=0}^{\max\{\kIf,l\}} \left(\tfrac{\tau_n}{2}\right)^{j-l}
					\sup_{t \in I_n} \big\| \varphi^{(j)}(t)\big\|
			\qquad \forall \varphi \in C^{\max\{\kIf,l\}}(\overline{I}_n,\RR^d)
		\end{equation*}
		holds for the local approximation operators $\If_n$, $1 \leq n \leq N$.
	\end{assumption}
	
	\begin{assumption}
		\label{assDiff}
		For $0 \leq i \leq \kJJ = \max\left\{\left\lfloor \tfrac{k}{2} \right\rfloor -1, \kII, \kIf \right\}$,
		it holds for sufficiently smooth functions $v, w$ that
		\begin{gather*}
			\left\| \frac{\d^i}{\d t^i} \Big(f\big(t,v(t)\big)-f\big(t,w(t)\big)\Big)\Big|_{t=s}\right\|
				\leq \Cdiff \, \sum_{l=0}^i \big\| (v-w)^{(l)}(s)\big\|
			\qquad \text{for } \mathrm{a.e.}\, s \in \overline{I} = [t_0,t_0+T].
		\end{gather*}
		Here $\Cdiff$ depends on $\kJJ$ and $f$.
	\end{assumption}
	
	\begin{remark}
		Sufficient conditions for Assumption~\ref{assDiff} would be, e.g.,
		\begin{enumerate}[(i)]
			\item for $\kJJ = 0$: $f$ satisfies a Lipschitz condition on
				the second variable with constant $L>0$,
			\item for $\kJJ \geq 1$: $f$ is affine linear in $u$, i.e., $f(t,u(t)) = A(t)u(t)+b(t)$, and
				$\left\| A(\cdot) \right\|_{C^{\kJJ}} < \infty$. Then the inequality follows from
				Leibniz' rule for the $i$th derivative.
			\item In the literature, see e.g.~\cite[p.~74]{Hen62}, there also
				appear conditions of the form
				\begin{gather*}
					\sup_{t \in I, \, y \in \RR^d} \left\| \frac{\partial}{\partial y} f^{(i)}(t,y) \right\| < \infty,
					\qquad 0 \leq i \leq \kJJ,
				\end{gather*}
				where $f^{(i)}$ denotes the $i$th total derivative of $f$ with respect to $t$ in the sense
				of~\cite[p.~65]{Hen62}. These conditions may be weaker in some cases.
		\end{enumerate}
		Since in general the constant $\Cdiff$ is somewhat connected to the Lipschitz
		constant and, thus, to the stiffness of the ode system, the dependence
		on this constant shall be studied very thoroughly in the analysis.
	\end{remark}
	
	Now, we shall investigate the solvability of the local problem~\eqref{eq:locProbGen}.
	
	\begin{theorem}[Existence and uniqueness]
		\label{th:solvableInt_genNew}
		Let $r,k \in \ZZ$, $0 \leq k \leq r$. We suppose that Assumptions~\ref{assIIhat},
		\ref{assSupII}, \ref{assSup}, and~\ref{assDiff} hold. Then there is a constant
		$\gamma_{r,k}>0$ multiplicatively depending on $\Cdiff^{-1}$ but independent of $n$
		such that problem~\eqref{eq:locProbGen} has a unique solution for all $1 \leq n \leq N$
		when $\tau_n \le \gamma_{r,k}$.
	\end{theorem}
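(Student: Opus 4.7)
I would recast the local problem \eqref{eq:locProbGen} as a fixed-point equation on the finite-dimensional Banach space $P_r(I_n,\RR^d)$ equipped with the sup-norm $\|\varphi\|_{\infty,I_n}:=\sup_{t\in I_n}\|\varphi(t)\|$, and invoke Banach's fixed-point theorem once $\tau_n$ is small enough relative to $\Cdiff^{-1}$. The key observation is that Lemma~\ref{le:JJhatWellDef} already handles the \emph{linear} problem obtained by freezing the nonlinearity $f(\cdot,U(\cdot))$ on the right-hand side, so a Picard-type iteration based on $\JJhat$ is the natural device.

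\textbf{The fixed-point map.} For $U_\mathrm{old}\in P_r(I_n,\RR^d)$ I would pick a primitive $\tilde v_\mathrm{old}\in C^\infty(\overline{I}_n,\RR^d)$ of $f(\cdot,U_\mathrm{old}(\cdot))$ normalised by $\tilde v_\mathrm{old}(t_{n-1}^+)=U(t_{n-1}^-)$, and set $\Phi(U_\mathrm{old}):=\JJn \tilde v_\mathrm{old}$, where $\JJn$ denotes the local counterpart of $\JJhat$ on $I_n$ (obtained by transformation via $T_n$). Lemma~\ref{le:JJhatWellDef} makes $\Phi$ well-defined on $P_r(I_n,\RR^d)$; moreover, the output only uses $\tilde v_\mathrm{old}(t_{n-1}^+)$, derivatives $\tilde v_\mathrm{old}^{(i)}$ at the interval endpoints, and $\Ifhat(\tilde v_\mathrm{old}')$, all of which are determined by $U_\mathrm{old}$ and the given datum $U(t_{n-1}^-)$. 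Comparing the defining conditions of $\JJn$ with \eqref{eq:locProbGenI}--\eqref{eq:locProbGenVar} shows that a polynomial $U\in P_r(I_n,\RR^d)$ solves \eqref{eq:locProbGen} iff $U=\Phi(U)$.

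\textbf{Contraction estimate and conclusion.} For $U_1,U_2\in P_r(I_n,\RR^d)$ set $W:=\Phi(U_1)-\Phi(U_2)$ and $F(t):=f(t,U_1(t))-f(t,U_2(t))$. By linearity of $\JJn$, the polynomial $W$ satisfies the homogeneous version of the initial/jump condition together with $W^{(i+1)}(t_{n-1}^+)=F^{(i)}(t_{n-1}^+)$ and $W^{(i+1)}(t_n^-)=F^{(i)}(t_n^-)$ up to the relevant orders, and a variational equation whose right-hand side is $\IIn{(\If_n F,\varphi)}$. Applying Lemma~\ref{le:normIneq} (for $k\ge 1$) or Lemma~\ref{le:normIneqDG} (for $k=0$) to $W$, the endpoint-derivative contributions $(\tau_n/2)^i\|F^{(i-1)}(\cdot)\|$ are controlled by Assumption~\ref{assDiff}, the integrator contributions $\|\IIn{\If_n F\cdot(1+T_n^{-1})^i}\|$ by Assumptions~\ref{assSupII} and~\ref{assSup}, and each resulting $\|(U_1-U_2)^{(l)}(t)\|$ is replaced by $C_\mathrm{inv}(\tau_n/2)^{-l}\|U_1-U_2\|_{\infty,I_n}$ via the inverse inequality \eqref{ieq:invIneq}. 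Careful bookkeeping (using $\tau_n\le 1$) yields
\[
	\|W\|_{\infty,I_n} \leq C_{r,k}\,\Cdiff\,\tau_n\,\|U_1-U_2\|_{\infty,I_n}
\]
with $C_{r,k}$ independent of $n$ and $\Cdiff$. Setting $\gamma_{r,k}:=(2C_{r,k}\Cdiff)^{-1}$ makes $\Phi$ a $\tfrac12$-contraction whenever $\tau_n\le\gamma_{r,k}$, and Banach's fixed-point theorem delivers the claimed unique solution of \eqref{eq:locProbGen}. The main obstacle is the $\tau_n$-accounting in the final step: one has to verify that the positive $(\tau_n/2)$-prefactors supplied by Lemmas~\ref{le:normIneq}/\ref{le:normIneqDG} and by Assumption~\ref{assSupII} always overcompensate the negative $(\tau_n/2)^{-l}$ arising from the inverse inequality, so that every surviving factor of $\Cdiff$ is paired with at least one net $\tau_n$.
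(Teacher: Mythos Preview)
Your proposal is correct and follows essentially the same route as the paper. The paper defines the fixed-point map $g$ directly by listing the point conditions \eqref{gA_gen}--\eqref{gDerA_gen} and the variational condition \eqref{gVar_gen}, whereas you realise the same map as $\JJn$ applied to a primitive of $f(\cdot,U_{\mathrm{old}}(\cdot))$; unwinding the definition of $\JJn$ shows these maps coincide, and from there your contraction argument (Lemmas~\ref{le:normIneq}/\ref{le:normIneqDG} for the norm, Assumption~\ref{assDiff} for the point-derivative terms, Assumptions~\ref{assSupII}+\ref{assSup} and Leibniz' rule for the integrator terms, inverse inequality~\eqref{ieq:invIneq} to close) matches the paper's treatment of the terms $\mathrm{(I)}+\mathrm{(II)}$ and $\mathrm{(III)}$ almost line by line.
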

	\begin{proof}
		Since the single local problems can be solved one after another,
		it suffices to prove that~\eqref{eq:locProbGen} determines a unique
		solution for a given initial value $U_{n-1}^-$.
		
		In order to show this, we use the auxiliary mapping $g : P_r(I_n,\RR^d)\to
		P_r(I_n,\RR^d)$ given by
		\begin{subequations}
		\label{g_gen}
		\begin{align}
			g(U)(t_{n-1}^+)
				& = U_{n-1}^-, 
				&& \text{if } k \geq 1, \label{gA_gen} \\
			g(U)^{(i+1)}(t_n^-)
				& = \frac{\d^i}{\d t^i} \Big(f\big(t,U(t)\big)\Big)\Big|_{t=t_n^-},
				&& \text{if } k \geq 2, \: i = 0, \ldots, \left\lfloor\tfrac{k}{2}\right\rfloor - 1, \label{gDerE_gen}\\
			g(U)^{(i+1)}(t_{n-1}^+)
				& = \frac{\d^i}{\d t^i} \Big(f\big(t,U(t)\big)\Big)\Big|_{t=t_{n-1}^+},
				&& \text{if } k \geq 3, \: i = 0, \ldots, \left\lfloor\tfrac{k-1}{2}\right\rfloor - 1, \label{gDerA_gen}
		\end{align}
		and
		\begin{multline}
			\label{gVar_gen}
			\IIn{ \Big( g(U)'(t),\varphi(t)\Big) } + \delta_{0,k} \Big(g(U)(t_{n-1}^+),\varphi(t_{n-1}^+)\Big) \\
			= \IIn{ \Big( \If_n f\big(t,U(t)\big), \varphi(t)\Big) } + \delta_{0,k} \Big(U_{n-1}^-,\varphi(t_{n-1}^+)\Big)
			\qquad\forall \varphi\in P_{r-k}(I_n,\RR^d).
		\end{multline}
		\end{subequations}
		Since all these conditions are linear, $g(U)$ is uniquely determined
		by~\eqref{g_gen} if and only if we have the unique solvability of the correspondent
		homogeneous problem, i.e.,
		find $V \in P_r(I_n,\RR^d)$ such that
		\begin{subequations}
		\label{homogProb_gen}
		\begin{align}
			V(t_{n-1}^+)
				& = 0, 
				&& \text{if } k \geq 1, \label{homgA_gen}\\
			V^{(i+1)}(t_n^-)
				& = 0,
				&& \text{if } k \geq 2, \: i = 0, \ldots, \left\lfloor\tfrac{k}{2}\right\rfloor - 1, \label{homgDerE_gen}\\
			V^{(i+1)}(t_{n-1}^+)
				& = 0,
				&& \text{if } k \geq 3, \: i = 0, \ldots, \left\lfloor\tfrac{k-1}{2}\right\rfloor - 1, \label{homgDerA_gen}
		\end{align}
		and
		\begin{equation}
			\label{homgVar_gen}
			\IIn{ \Big( V'(t),\varphi(t)\Big) } + \delta_{0,k} \Big(V(t_{n-1}^+),\varphi(t_{n-1}^+)\Big) = 0
			\qquad\forall \varphi\in P_{r-k}(I_n,\RR^d).
		\end{equation}
		\end{subequations}
		Analogously to the proof of Lemma~\ref{le:JJhatWellDef}, we obtain that the conditions
		of~\eqref{homogProb_gen} allow the trivial solution $V\equiv 0$ only. So the homogeneous
		problem is uniquely solvable and, thus, the mapping $g$ is well-defined by~\eqref{g_gen}.
				
		It can be easily seen that for given $U_{n-1}^-$ every fixed point of $g$ is a solution
		of the local problem~\eqref{eq:locProbGen} and vice versa. To get the existence of
		a unique fixed point, Banach's fixed point theorem shall be applied. Therefore, we
		will prove for $\tau_n \le \gamma_{r,k}$ with $\gamma_{r,k}>0$ to be defined
		that $g$ is on $\left\{U \in P_r(I_n,\RR^d) : U(t_{n-1}^+) = U_{n-1}^- \text{ if } k \geq 1\right\}$
		a contraction with respect to the supremum norm.
		
		So, let $V,W \in P_r(I_n,\RR^d)$ and additionally assume $V(t_{n-1}^+) = W(t_{n-1}^+) = U_{n-1}^-$ if $k \geq 1$.
		Since $g(V)- g(W) \in P_{r}(I_n,\RR^d)$, Lemma~\ref{le:normIneq} (if $k \geq 1$)
		or Lemma~\ref{le:normIneqDG} (if $k=0$), respectively, yields
		\begin{align}
			& \sup_{t \in I_n} \big\| g(V)-g(W) \big\| \label{ghelp1_gen} \\[-3ex]
			& \quad \leq \bar{\kappa}_{r,k} \Bigg(
				\overbrace{\sum_{i=1}^{\lfloor{\frac{k-1}{2}}\rfloor} \!\left(\tfrac{\tau_n}{2}\right)^{i}
					\left\| \big(g(V)- g(W)\big)^{(i)}(t_{n-1}^+)\right\|}^{\mathrm{(I)}}
				+ \overbrace{\sum_{i=1}^{\lfloor{\frac{k}{2}}\rfloor} \!\left(\tfrac{\tau_n}{2}\right)^{i}
					\left\| \big(g(V)- g(W)\big)^{(i)}(t_n^-)\right\|}^{\mathrm{(II)}} \nonumber \\
			& \hspace{4em} + \underbrace{\sum_{i=0}^{r-k} \left\| \IIn{\big(g(V)- g(W)\big)'(t) \, (1+T_n^{-1}(t))^i } 
				+ \delta_{0,k}\delta_{0,i} \big(g(V)-g(W)\big)(t_{n-1}^+) \right\|}_{\mathrm{(III)}}
				\Bigg) \nonumber
		\end{align}
		where
		\begin{equation*}
			\bar{\kappa}_{r,k} := \begin{cases}\kappa_r, & k=0,\\ \kappa_{r,k}, & k>0.\end{cases}
		\end{equation*}
		Here, for $k \geq 1$ we also used that $\big(g(V)- g(W)\big)(t_{n-1}^+) = \big(V- W\big)(t_{n-1}^+) = 0$
		by~\eqref{gA_gen}. Involving~\eqref{gDerE_gen} and~\eqref{gDerA_gen}
		the sums $\mathrm{(I)}$ and $\mathrm{(II)}$ can be rewritten as
		\begin{align*}
			\mathrm{(I)}+ \mathrm{(II)}
				& = \frac{\tau_n}{2} \!
					\left(\sum_{i=0}^{\lfloor{\frac{k-1}{2}}\rfloor-1} \!\left(\tfrac{\tau_n}{2}\right)^{i}
					\left\| \frac{\d^i}{\d t^i} \Big(f\big(t,V(t)\big)-f\big(t,W(t)\big)\Big)\Big|_{t=t_{n-1}^+}\right\| \right. \\
				& \hspace{6em} \left.+ \sum_{i=0}^{\lfloor{\frac{k}{2}}\rfloor-1} \!\left(\tfrac{\tau_n}{2}\right)^{i}
					\left\| \frac{\d^i}{\d t^i} \Big(f\big(t,V(t)\big)-f\big(t,W(t)\big)\Big)\Big|_{t=t_n^-}\right\| \right) \!.
		\end{align*}
		Thus by Assumption~\ref{assDiff} we conclude
		\begin{align*}
			\mathrm{(I)}+ \mathrm{(II)}
				& \leq \Cdiff \frac{\tau_n}{2} \!
					\left(\sum_{i=0}^{\lfloor{\frac{k}{2}}\rfloor-1} \!\left(\tfrac{\tau_n}{2}\right)^{i}
					\sum_{l=0}^i \left\| (V-W)^{(l)}(t_n^-)\right\|
				  + \! \sum_{i=0}^{\lfloor{\frac{k-1}{2}}\rfloor-1} \!\left(\tfrac{\tau_n}{2}\right)^{i}
					\sum_{l=0}^i \left\| (V-W)^{(l)}(t_{n-1}^+)\right\| \right) \\
				& \leq \Cdiff \left\lfloor{\frac{k}{2}}\right\rfloor \frac{\tau_n}{2} \!
					\left(\sum_{l=0}^{\lfloor{\frac{k}{2}}\rfloor-1} \!\left(\tfrac{\tau_n}{2}\right)^{l}
					\left\| (V-W)^{(l)}(t_n^-)\right\|
				  + \sum_{l=0}^{\lfloor{\frac{k-1}{2}}\rfloor-1} \!\left(\tfrac{\tau_n}{2}\right)^{l}
					\left\| (V-W)^{(l)}(t_{n-1}^+)\right\| \right)\!.
		\end{align*}
		The appearing sums can be bounded by the inverse inequalities~\eqref{ieq:invIneq}
		in the following way
		\begin{align*}
			& \sum_{l=0}^{\lfloor{\frac{k}{2}}\rfloor-1} \!\left(\tfrac{\tau_n}{2}\right)^{l} \big\|(V-W)^{(l)}(t_n^-)\big\|
				\leq \sum_{l=0}^{\lfloor{\frac{k}{2}}\rfloor-1} \!\left(\tfrac{\tau_n}{2}\right)^{l} \sup_{t \in I_n}\big\|(V-W)^{(l)}(t)\big\| \\
			& \qquad \leq
					\sum_{l=0}^{\lfloor{\frac{k}{2}}\rfloor-1} \!\left(\tfrac{\tau_n}{2}\right)^{l} C_{\mathrm{inv}} \left(\tfrac{\tau_n}{2}\right)^{-l}
					\sup_{t \in I_n}\big\|(V-W)(t)\big\|
				\leq C_{\mathrm{inv}} \left\lfloor{\tfrac{k}{2}}\right\rfloor \sup_{t \in I_n}\big\|(V-W)(t)\big\|.
		\end{align*}
		The argumentation for the second sum is analogue. Altogether, we obtain
		\begin{gather}
			\label{ghelp2(i)(ii)_gen}
			\mathrm{(I)}+ \mathrm{(II)}
				\leq \Cdiff C_\mathrm{inv} \left\lfloor\tfrac{k}{2}\right\rfloor (k-1) \,
				\frac{\tau_n}{2} \sup_{t\in I_n} \big\|(V-W)(t)\big\|.
		\end{gather}
		
		We now consider the third sum of~\eqref{ghelp1_gen}. Exploiting~\eqref{gVar_gen}
		we gain that
		\begin{gather*}
			\mathrm{(III)}
				= \sum_{i=0}^{r-k}
				\left\| \IIn{\big(\If_n f\big(t,V(t)\big)-\If_n f\big(t,W(t)\big)\big) \, (1+T_n^{-1}(t))^i } \right\|\!
		\end{gather*}
		where we used test functions of the form $\varphi(t) = (1+T_n^{-1}(t))^i e_j$, $j = 1,\ldots, d$,
		in order to derive the component-wise identity needed here. Applying the estimate 
		of Assumption~\ref{assSupII} it follows
		\begin{align*}
			& \left\| \IIn{\big(\If_n f\big(t,V(t)\big)-\If_n f\big(t,W(t)\big)\big) \, (1+T_n^{-1}(t))^i } \right\| \\
			& \quad \leq \CII \frac{\tau_n}{2} \sum_{j=0}^{\kII} \left(\tfrac{\tau_n}{2}\right)^j
				\sup_{t \in I_n} \left\| \frac{\d^j}{\d t^j} \Big(
					\big(\If_n f\big(t,V(t)\big)-\If_n f\big(t,W(t)\big)\big) \, (1+T_n^{-1}(t))^i \Big) \right\| \\
			& \quad \leq \CII \frac{\tau_n}{2} \sum_{j=0}^{\kII} \sum_{l=0}^j \tbinom{j}{l} \left(\tfrac{\tau_n}{2}\right)^j
				\sup_{t \in I_n} \left\| \frac{\d^{l}}{\d t^{l}} \Big(\If_n f\big(t,V(t)\big)-\If_n f\big(t,W(t)\big)\Big) \right\|
				\, \sup_{t \in I_n} \left\| \big((1+T_n^{-1}(t))^i\big)^{(j-l)} \right\|
		\end{align*}
		where Leibniz' rule for $j$th derivative was exploited for the last inequality.
		Obviously, $(1+T_n^{-1}(t))^i$ is a polynomial of degree $i$ in $t$ and one easily shows
		\begin{gather*}
			\big((1+T_n^{-1}(t))^i\big)^{(j-l)} =
			\begin{cases}
				\big(\tfrac{2}{\tau_n}\big)^{j-l} \tfrac{i!}{(i-(j-l))!} (1+T_n^{-1}(t))^{i-(j-l)},
					& 0 \leq j-l \leq i,\\
				0,	& i < j-l.
			\end{cases}
		\end{gather*}
		Moreover, by construction $1+T_n^{-1}(t) \in (0,2]$ for all $t \in I_n$. Hence, we get
		\begin{align*}
			& \left\| \IIn{\big(\If_n f\big(t,V(t)\big)-\If_n f\big(t,W(t)\big)\big) \, (1+T_n^{-1}(t))^i } \right\| \\
			& \quad \leq \CII \frac{\tau_n}{2} \sum_{j=0}^{\kII} \sum_{l=\max\{0,j-i\}}^{j} \tbinom{j}{l} \tfrac{i!}{(i-(j-l))!} 2^{i-(j-l)}
				\left(\tfrac{\tau_n}{2}\right)^{l}
				\sup_{t \in I_n} \left\| \frac{\d^{l}}{\d t^{l}} \Big(\If_n f\big(t,V(t)\big)-\If_n f\big(t,W(t)\big)\Big) \right\| \\
			& \quad = \CII \frac{\tau_n}{2} \sum_{l=0}^{\kII} \left(\tfrac{\tau_n}{2}\right)^{l}
				\sup_{t \in I_n} \left\| \frac{\d^{l}}{\d t^{l}} \Big(\If_n f\big(t,V(t)\big)-\If_n f\big(t,W(t)\big)\Big) \right\|
					\sum_{j=l}^{\min\{l+i,\kII\}} \tbinom{j}{l} \tfrac{i!}{(i-(j-l))!} 2^{i-(j-l)}.
		\end{align*}
		Now, involving Assumption~\ref{assSup} we conclude further that
		\begin{align*}
			& \left\| \IIn{\big(\If_n f\big(t,V(t)\big)-\If_n f\big(t,W(t)\big)\big) \, (1+T_n^{-1}(t))^i } \right\| \\
			& \quad \leq \CII \Csup \frac{\tau_n}{2} \sum_{l=0}^{\kII} \sum_{j=0}^{\max\{\kII,\kIf\}} \left(\tfrac{\tau_n}{2}\right)^{j}
				\sup_{t \in I_n} \left\| \frac{\d^{j}}{\d t^{j}} \Big(f\big(t,V(t)\big)-f\big(t,W(t)\big)\Big) \right\|
					\sum_{j=0}^{\min\{i,\kII-l\}} \tbinom{j+l}{l} \tfrac{i!}{(i-j)!} 2^{i-j} \\
			& \quad \leq \CII \Csup \frac{\tau_n}{2} \sum_{j=0}^{\max\{\kII,\kIf\}} \left(\tfrac{\tau_n}{2}\right)^{j}
				\sup_{t \in I_n} \left\| \frac{\d^{j}}{\d t^{j}} \Big(f\big(t,V(t)\big)-f\big(t,W(t)\big)\Big) \right\|
					\underbrace{\sum_{l=0}^{\kII} \sum_{j=0}^{\min\{i,\kII-l\}} \tbinom{j+l}{l} \tfrac{i!}{(i-j)!} 2^{i-j}}_{= C_{\Sigma,i}}.
		\end{align*}
		The backmost double sum $C_{\Sigma,i}$ can be simplified and estimated for example as follows
		\begin{gather}
			\label{def:CSigma}
			C_{\Sigma,i} = \sum_{l=0}^{\kII} \sum_{j=0}^{\min\{i,\kII-l\}} \tbinom{i}{j} \tfrac{(j+l)!}{l!} 2^{i-j}
				\leq \kII! \sum_{l=0}^{\kII} \sum_{j=0}^{i} \tbinom{i}{j} 2^{i-j}
				= \kII! \sum_{l=0}^{\kII} 3^i
				= (\kII+1)! \, 3^i.
		\end{gather}
		By Assumption~\ref{assDiff} we then obtain that
		\begin{align*}
			\mathrm{(III)}
				& \leq \CII \Csup \Cdiff \frac{\tau_n}{2}
					\sum_{j=0}^{\max\{\kII,\kIf\}} \left(\tfrac{\tau_n}{2}\right)^{j} \sum_{l=0}^j \sup_{t \in I_n} \big\| \big(V-W\big)^{(l)}(t) \big\|
					\sum_{i=0}^{r-k} C_{\Sigma,i} \\
				& = \CII \Csup \Cdiff \frac{\tau_n}{2}
					\sum_{l=0}^{\max\{\kII,\kIf\}} \left(\tfrac{\tau_n}{2}\right)^{l} \sup_{t \in I_n} \big\| \big(V-W\big)^{(l)}(t) \big\| 
					\sum_{j=l}^{\max\{\kII,\kIf\}} \left(\tfrac{\tau_n}{2}\right)^{j-l} \sum_{i=0}^{r-k} C_{\Sigma,i} \\
				& \leq \CII \Csup \Cdiff (\max\{\kII,\kIf\}+1) (\kII+1)! \frac{\left(3^{r-k+1} -1\right)}{2} \frac{\tau_n}{2}
					\sum_{l=0}^{\max\{\kII,\kIf\}} \left(\tfrac{\tau_n}{2}\right)^{l} \sup_{t \in I_n} \big\| \big(V-W\big)^{(l)}(t) \big\|.
		\end{align*}
		So, applying the inverse inequalities~\eqref{ieq:invIneq} we conclude similar to the estimation
		of~$\mathrm{(I)}$ and~$\mathrm{(II)}$ that
		\begin{gather}
			\label{ghelp2(iii)_gen}
			\mathrm{(III)}
				\leq \CII \Csup \Cdiff C_\mathrm{inv} (\max\{\kII,\kIf\}+1)^2 (\kII+1)! \frac{\left(3^{r-k+1} -1\right)}{2} \frac{\tau_n}{2}
					\sup_{t \in I_n} \big\| \big(V-W\big)(t) \big\|.
		\end{gather}
		
		Now, combining~\eqref{ghelp1_gen}, \eqref{ghelp2(i)(ii)_gen}, and~\eqref{ghelp2(iii)_gen},
		we get
		\begin{gather*}
			\sup_{t \in I_n} \big\| g(V)-g(W) \big\|
				\leq \bar{\kappa}_{r,k} \Cdiff
					\left(\Co{\big(C_\mathrm{inv},k\big)} + \Co{\big(\CII,\Csup,C_\mathrm{inv},\kIf,\kII,r,k\big)} \right)
					\frac{\tau_n}{2} \sup_{t\in I_n} \big\|(V-W)(t)\big\|.
		\end{gather*}
		Hence, for sufficiently small $\tau_n \leq \gamma_{r,k}$ where $\gamma_{r,k}$ multiplicatively
		depends on $\Cdiff^{-1}$ but is independent of $n$, the mapping $g$ is on
		$\left\{U \in P_r(I_n,\RR^d) : U(t_{n-1}^+) = U_{n-1}^- \text{ if } k \geq 1\right\}$
		a contraction with respect to the supremum norm.
		By Banach's fixed point theorem we have the existence of a unique
		fixed point which is just the unique solution of the local
		problem~\eqref{eq:locProbGen}.
	\end{proof}
	
	\section{Error analysis}
	\label{sec:error}
	
	In order to prove an error estimate we shall reuse the operator $\JJhat$
	introduced in the previous section. It is used to define a local approximation
	on the interval $I_n$.
	\begin{lemma}[Approximation operator]
		\label{le:approxJJn}
		Let $r,k\in\ZZ$ with $0\le k\le r$. Moreover, suppose that
		Assumption~\ref{assIIhat} holds. Then, the operator
		$\JJn:C^{\kJJ+1}(\overline{I}_n,\RR^d) \to P_r(I_n,\mathbb{R}^d)$ given by
		\begin{subequations}
		\label{def:JJn}
		\begin{align}
			(\JJn v)^{(i)}(t_{n-1}^+)
				& = v^{(i)}(t_{n-1}^+),
				&& \text{if } k \geq 1, \, i=0,\ldots,\left\lfloor \tfrac{k-1}{2} \right\rfloor\!,
			\label{def:JJn_A} \\
			(\JJn v)^{(i)}(t_{n}^-)
				& = v^{(i)}(t_{n}^-),
				&& \text{if } k \geq 2, \, i=1,\ldots,\left\lfloor \tfrac{k}{2} \right\rfloor\!,
			\label{def:JJn_E} \\
				& \hspace{-9em} \IIn{ \big((\JJn v)'(t), \varphi(t)\big)} + \delta_{0,k} \JJn v(t_{n-1}^+) \varphi(t_{n-1}^+) && \nonumber \\
				& \hspace{-7.5em} = \IIn{ \big(\If_n (v')(t), \varphi(t)\big)} + \delta_{0,k} v(t_{n-1}^+) \varphi(t_{n-1}^+)
				&& \forall \varphi \in P_{r-k}(I_n,\RR^d)
			\label{def:JJn_Var}
		\end{align}
		\end{subequations}
		is well-defined.
	\end{lemma}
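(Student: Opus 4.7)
The plan is to obtain $\JJn$ by pulling back the reference operator $\JJhat$ via the affine transformation $T_n$ from Lemma~\ref{le:JJhatWellDef}. Specifically, for $v\in C^{\kJJ+1}(\overline{I}_n,\RR^d)$, set $\hat{v}:=v\circ T_n \in C^{\kJJ+1}([-1,1],\RR^d)$ and define
\begin{gather*}
    (\JJn v)(t) := \bigl(\JJhat \hat{v}\bigr)\bigl(T_n^{-1}(t)\bigr), \qquad t\in I_n.
\end{gather*}
Because $\JJhat\hat{v}\in P_r([-1,1],\RR^d)$ and $T_n^{-1}$ is affine, this yields an element of $P_r(I_n,\RR^d)$. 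Well-definedness of the right-hand side follows from Assumption~\ref{assIIhat} via Lemma~\ref{le:JJhatWellDef}. It then remains to verify that this polynomial satisfies the three defining relations \eqref{def:JJn_A}--\eqref{def:JJn_Var}, and to argue uniqueness.

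For the pointwise conditions \eqref{def:JJn_A} and \eqref{def:JJn_E}, the chain rule together with $(T_n^{-1})'=2/\tau_n$ and its powers, applied both to $\JJn v$ and to $\hat v=v\circ T_n$, gives the identities
\begin{gather*}
    (\JJn v)^{(i)}(t_{n-1}^+) = \bigl(\tfrac{2}{\tau_n}\bigr)^{\!i} (\JJhat\hat{v})^{(i)}(-1^+), \qquad
    \hat{v}^{(i)}(-1^+) = \bigl(\tfrac{\tau_n}{2}\bigr)^{\!i} v^{(i)}(t_{n-1}^+),
\end{gather*}
and the analogous relations at the right endpoint. The matching scaling factors cancel, so \eqref{def:JJn_A} and \eqref{def:JJn_E} follow directly from the corresponding conditions defining $\JJhat$.

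The key bookkeeping step is the variational identity \eqref{def:JJn_Var}. Given any $\varphi\in P_{r-k}(I_n,\RR^d)$, set $\widehat{\varphi}:=\varphi\circ T_n\in P_{r-k}([-1,1],\RR^d)$ and plug it into the reference variational identity for $\JJhat\hat{v}$. I will need the two transformation rules stated right after \eqref{def:timeTransfNew}, namely
\begin{gather*}
    \IIn{\psi} = \tfrac{\tau_n}{2}\,\IIhat[\psi\circ T_n], \qquad \If_n \psi = \bigl(\Ifhat(\psi\circ T_n)\bigr)\circ T_n^{-1},
\end{gather*}
together with the chain-rule identity $(v\circ T_n)'=\tfrac{\tau_n}{2}(v'\circ T_n)$, which yields $\Ifhat\hat{v}' = \tfrac{\tau_n}{2}(\If_n v')\circ T_n$, and similarly $(\JJhat\hat{v})'=\tfrac{\tau_n}{2}((\JJn v)'\circ T_n)$. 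Substituting these into the reference identity and using $\IIhat[\cdot]=\tfrac{2}{\tau_n}\IIn{\cdot\circ T_n^{-1}}$, the factors $\tfrac{\tau_n}{2}$ combine into a single $\tfrac{\tau_n}{2}\cdot\tfrac{2}{\tau_n}=1$ on both sides and the jump terms $\JJhat\hat{v}(-1^+)\widehat{\varphi}(-1^+)=\JJn v(t_{n-1}^+)\varphi(t_{n-1}^+)$ and $\hat{v}(-1^+)\widehat{\varphi}(-1^+)=v(t_{n-1}^+)\varphi(t_{n-1}^+)$ carry over literally; this gives exactly \eqref{def:JJn_Var}. Uniqueness is automatic: any two candidates for $\JJn v$ would, after composition with $T_n$, provide two elements of $P_r([-1,1],\RR^d)$ satisfying \eqref{def:JJhat} for $\hat{v}$, contradicting the uniqueness part of Lemma~\ref{le:JJhatWellDef}. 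The main obstacle is purely notational, namely keeping track of all the $\tau_n/2$ factors arising from differentiation under $T_n$ versus the different scaling of $\IIhat$ and $\Ifhat$; no new analytic input beyond Lemma~\ref{le:JJhatWellDef} is required.
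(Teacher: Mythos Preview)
Your proof is correct and follows the same approach as the paper, which simply states that existence and uniqueness are a direct consequence of Lemma~\ref{le:JJhatWellDef}. You have merely made explicit the affine pullback via $T_n$ and the associated scaling bookkeeping that the paper leaves implicit; no different idea is involved.
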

	\begin{proof}
		The existence and uniqueness of the approximation is a direct consequence of
		Lemma~\ref{le:JJhatWellDef}.
	\end{proof}
	
	For convenience, we define for $v \in C^{\kJJ+1}(\overline{I},\RR^d)$
	a global approximation operator $\JJ$ by combining the local
	approximations on the mesh intervals, i.e.,
	$(\JJ v) \big|_{I_n} = \JJn \big(v|_{I_n}\big) \in P_{r}(I_n,\RR^d)$
	for $n = 1,\ldots,N$ and setting $\JJ v(t_0^-) = v(t_0^-) = v(t_0)$.
	Note that even for $k \geq 1$ in general $\JJ v$ is not globally continuous.
	
	\begin{lemma}
		\label{le:stabJJn}
		Let $r,k \in \ZZ$, $0 \leq k \leq r$, and suppose that Assumptions~\ref{assIIhat},
		\ref{assSupII}, and~\ref{assSup} hold. Then for $1 \leq n \leq N$ and
		$0 \leq l \leq r$ we have for all
		$v \in C^{\kJJ+1}(\overline{I}_n,\RR^d)$
		\begin{equation*}
			\sup_{t \in I_n} \big\| (\JJn v )^{(l)}(t)\big\|
				\leq C \sum_{j=0}^{\kJJ+1}
					\left(\tfrac{\tau_n}{2}\right)^{j-l} \sup_{t \in I_n} \big\| v^{(j)}(t)\big\|.
		\end{equation*}
	\end{lemma}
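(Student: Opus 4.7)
The plan is to prove the estimate in two stages: first establish the case $l = 0$ directly from the defining conditions~\eqref{def:JJn} together with the norm-type inequalities from Lemma~\ref{le:normIneq} (if $k\geq 1$) or Lemma~\ref{le:normIneqDG} (if $k=0$), and then pass to arbitrary $l \in \{0,\ldots,r\}$ by the inverse inequality~\eqref{ieq:invIneq}.

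Treat first the case $k \geq 1$. I would apply Lemma~\ref{le:normIneq} to $\varphi = \JJn v \in P_r(I_n,\RR^d)$. The boundary derivative terms simplify via~\eqref{def:JJn_A} and~\eqref{def:JJn_E}: one has $(\JJn v)^{(i)}(t_{n-1}^+) = v^{(i)}(t_{n-1}^+)$ for $i \leq \lfloor (k-1)/2\rfloor$ and $(\JJn v)^{(i)}(t_n^-) = v^{(i)}(t_n^-)$ for $i \leq \lfloor k/2\rfloor$, each controlled trivially by $\sup_{t\in I_n}\|v^{(i)}(t)\|$. The integrator terms $\|\IIn{(\JJn v)'(t)(1+T_n^{-1}(t))^i}\|$ are rewritten via the variational condition~\eqref{def:JJn_Var} as $\|\IIn{\If_n(v')(t)(1+T_n^{-1}(t))^i}\|$ (the $\delta_{0,k}$ contributions vanish for $k\geq 1$). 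The case $k=0$ is completely parallel but uses Lemma~\ref{le:normIneqDG} in place of Lemma~\ref{le:normIneq}; the jump contribution is retained and combines with $v(t_{n-1}^+)$.

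The central calculation is the bound on the integrator terms. Applying Assumption~\ref{assSupII}, then Leibniz' rule to distribute the $j$th derivative between $\If_n(v')$ and the polynomial $(1+T_n^{-1}(t))^i$ (whose derivatives scale with factors of $(\tau_n/2)^{-1}$), and finally Assumption~\ref{assSup} to express $(\If_n v')^{(m)}$ through derivatives of $v'$, i.e.\ of $v^{(m+1)}$ for $m\leq\max\{\kIf,\kII\}$, all terms collapse to the shape $C(\tau_n/2)^j \sup_{t\in I_n}\|v^{(j)}(t)\|$ with $j \leq \kJJ+1$. This is essentially the calculation that produced~\eqref{ghelp2(iii)_gen} in the proof of Theorem~\ref{th:solvableInt_genNew}, but simpler since here we deal with a single function $v$ rather than a difference $V-W$ and Assumption~\ref{assDiff} is not needed. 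Combined with the boundary terms and with the second inequality of Lemma~\ref{le:normIneq} (or the single estimate of Lemma~\ref{le:normIneqDG}), this yields
\[
\sup_{t\in I_n}\|\JJn v(t)\| \leq C \sum_{j=0}^{\kJJ+1}\left(\tfrac{\tau_n}{2}\right)^j \sup_{t\in I_n}\|v^{(j)}(t)\|,
\]
which is the case $l=0$.

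Finally, for $1 \leq l \leq r$ the inverse inequality~\eqref{ieq:invIneq} applied to $V = \JJn v$ supplies the missing factor $(\tau_n/2)^{-l}$ and gives the claim in full generality. The main obstacle is the careful bookkeeping of derivative scalings in the combined application of Assumption~\ref{assSupII}, Leibniz' rule, and Assumption~\ref{assSup}, but as this structural pattern has already been executed (in a more involved form) inside the proof of Theorem~\ref{th:solvableInt_genNew}, the present argument is a cleaner instance of the same mechanism.
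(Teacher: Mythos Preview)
Your proposal is correct. It differs from the paper's proof only in presentation: the paper argues abstractly by transforming to the reference interval, invoking the boundedness of $\JJhat$ as a linear operator $C^{\kJJ+1}([-1,1])\to P_r([-1,1])$ (which is where Assumptions~\ref{assSupII} and~\ref{assSup} enter implicitly), and then transforming back to obtain the $(\tau_n/2)^j$ factors; the inverse inequality supplies $(\tau_n/2)^{-l}$ just as in your argument. Your route instead stays on $I_n$ and makes the mechanism explicit by invoking Lemmas~\ref{le:normIneq}/\ref{le:normIneqDG}, the defining conditions~\eqref{def:JJn}, and the Leibniz/assumption chain already seen in the proof of Theorem~\ref{th:solvableInt_genNew}. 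Your approach is longer but more transparent about how each assumption is used; the paper's scaling argument is terser but leaves the continuity of $\JJhat$ to the reader.
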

	\begin{proof}
		To derive the estimate, an inverse inequality, which yields the
		factor $\left(\frac{\tau_n}{2}\right)^{-l}$, is applied first.
		The remaining term, $\sup_{t \in I_n} \big\| \JJn v (t)\big\|$,
		can be bounded on the reference interval independent of
		$\tau_n$. The factors $\left(\frac{\tau_n}{2}\right)^{j}$ then
		result from transformation.
	\end{proof}
	
	For the argument used to prove the error estimate we need additional
	assumptions. In detail, compared to Theorem~\ref{th:solvableInt_genNew}
	we replace Assumption~\ref{assSup} by Assumption~\ref{assPoint_a} or~\ref{assPoint_b}
	since derivatives can be handled given in certain points, but not their supremum.
	Furthermore, we exploit in the error analysis an auxiliary interpolation
	operator $\Ifapp$ defined below, see Definition~\ref{def:auxIfapp}, which
	amongst others is based on these assumptions.
	
	For brevity the Assumptions~\ref{assPoint_a} and~\ref{assPoint_b} below shall
	be stated directly for the local operators $\II$ and $\If_n$. However,
	appropriate properties of $\IIhat$ and $\Ifhat$ guarantee these
	assumptions by transformation, cf. Assumptions~\ref{assSupII} and~\ref{assSup}.
		
	\begin{subtheorem}{assumption}
	\label{assPoint}
	\begin{assumption}
		\label{assPoint_a}
		For $1 \leq n \leq N$ and $0 \leq l \leq \kII$ it holds $\If_n \varphi \in C^{l}(\overline{I}_n, \RR^d)$
		and there are disjoint points $\hat{t}_m^{\,\If}$, $m = 0, \ldots,\KIf$, in the
		reference interval $[-1,1]$ such that 
		\begin{equation*}
			\left(\tfrac{\tau_n}{2}\right)^{l} \sup_{t \in I_n} \big\| (\If_n \varphi)^{(l)}(t)\big\|
				\leq \Cpoint \sum_{m=0}^{\KIf} \sum_{j=0}^{\KTIf_m}
					\left(\tfrac{\tau_n}{2}\right)^{j} \big\| \varphi^{(j)}(t_{n,m}^\If)\big\|
					+ \Cpointt \sup_{t \in I_n} \big\| \varphi(t)\big\|
			\qquad \forall \varphi \in C^{\kIf}(\overline{I}_n,\RR^d)
		\end{equation*}
		where $t_{n,m}^\If := \frac{t_n+t_{n-1}}{2} + \frac{\tau_n}{2} \hat{t}_m^{\,\If}$.
		Note that then typically $\kIf = \max\{\KTIf_m : m = 0,\ldots,\KIf\}$.
	\end{assumption}
	
	\begin{assumption}
		\label{assPoint_b}
		For $1 \leq n \leq N$, there are disjoint points
		$\hat{t}_m^{\,\mathscr{I}}$, $m = 0, \ldots,\KII$, in the reference interval
		$[-1,1]$ such that 
		\begin{equation*}
			\left\| \IIn{ \varphi }\right\|
				\leq \CIIpointB \frac{\tau_n}{2} \sum_{m=0}^{\KII} \sum_{j=0}^{\KTII_m}
					\left(\tfrac{\tau_n}{2}\right)^{j} \big\| \varphi^{(j)}(t_{n,m}^{\mathscr{I}})\big\|
					+ \CIIpointtB \frac{\tau_n}{2} \sup_{t \in I_n} \big\| \varphi(t)\big\|
			\qquad \forall \varphi \in C^{\kII}(\overline{I}_n,\RR^d)
		\end{equation*}
		where $t_{n,m}^{\mathscr{I}} := \frac{t_n+t_{n-1}}{2} + \frac{\tau_n}{2} \hat{t}_m^{\,\mathscr{I}}$.
		Note that then typically $\kII = \max\{\KTII_m : m = 0,\ldots,\KII\}$.
		
		Moreover, for $1 \leq n \leq N$ assume that there are disjoint points
		$\hat{t}_m^{\,\If}$, $m = 0, \ldots,\KIf$, in the reference interval $[-1,1]$ such that 
		\begin{multline*}
			\sum_{m=0}^{\KII} \sum_{l=0}^{\KTII_m}
					\left(\tfrac{\tau_n}{2}\right)^{l} \big\| (\If_n \varphi)^{(l)}(t_{n,m}^{\mathscr{I}})\big\|
				+ \sup_{t \in I_n} \big\| \If_n \varphi(t)\big\| \\
			\leq \CpointB \sum_{m=0}^{\KIf} \sum_{j=0}^{\KTIf_m}
					\left(\tfrac{\tau_n}{2}\right)^{j} \big\| \varphi^{(j)}(t_{n,m}^\If)\big\|
				+ \CpointtB \sup_{t \in I_n} \big\| \varphi(t)\big\|
			\qquad \forall \varphi \in C^{\max\{\kII,\,\kIf\}}(\overline{I}_n,\RR^d)
		\end{multline*}
		where $t_{n,m}^\If := \frac{t_n+t_{n-1}}{2} + \frac{\tau_n}{2} \hat{t}_m^{\,\If}$.
	\end{assumption}
	\end{subtheorem}
	
	\begin{remark}
		Assumption~\ref{assPoint_a} is satisfied if $\If_n$ is a polynomial
		approximation operator whose defining degrees of freedom only use
		derivatives in certain points, as for example Hermite interpolation operators.
		Together with Assumption~\ref{assSupII} the term $\left\| \IIn{ \If_n \varphi }\right\|$
		can be estimated by the supremum of $\left\| \varphi\right\|$ and
		certain point values of derivatives of $\varphi$.
		
		However, Assumption~\ref{assPoint_a} is not satisfied if $\If_n = \Id$
		and $\kII > 0$. In order to enable a similar estimate for
		$\left\| \IIn{ \If_n \varphi }\right\|$ also in this case, Assumption~\ref{assPoint_b}
		is formulated. Here the requirements on the integrator $\II$ are increased. Of
		course, the defining degrees of freedom for the integrator now should use
		derivatives in certain points only. In return, the requirements for $\If_n$
		can be weakened such that they are met for example also by $\If_n = \Id$.
	\end{remark}
	
	\begin{definition}[Auxiliary interpolation operator]
		\label{def:auxIfapp}
		For the error estimation we introduce a special Hermite interpolation operator $\Ifapp_n$.
		Concretely, the operator should satisfy the following conditions: $\Ifapp_n$
		preserves derivatives up to order $\left\lfloor{\frac{k}{2}}\right\rfloor -1$ in $t_n^-$ and up to
		order $\left\lfloor{\frac{k-1}{2}}\right\rfloor-1$ in $t_{n-1}^+$, i.e.,
		\begin{gather}
		\label{def:Ifapp1}
		\begin{aligned}
			(\Ifapp_n \varphi)^{(l)}(t_n^-)
				& = \varphi^{(l)}(t_n^-)
				&& \qquad \text{for } 0 \leq l \leq \left\lfloor{\tfrac{k}{2}}\right\rfloor-1, \\
			(\Ifapp_n \varphi)^{(l)}(t_{n-1}^+)
				& = \varphi^{(l)}(t_{n-1}^+)
				&& \qquad \text{for } 0 \leq l \leq \left\lfloor{\tfrac{k-1}{2}}\right\rfloor-1.
		\end{aligned}
		\end{gather}
		Moreover, we demand that
		\begin{subequations}
		\label{def:Ifapp2}
		\begin{alignat}{2}
			(\Ifapp_n \varphi)^{(l)}(t_{n,m}^\If)
				& = \varphi^{(l)}(t_{n,m}^\If)
				&& \quad \qquad \text{for } 0 \leq m \leq \KIf, \, 0 \leq l \leq \KTIf_m, \label{def:Ifapp2a}
		\intertext{%
		with $t_{n,m}^\If := \frac{t_n+t_{n-1}}{2} + \frac{\tau_n}{2} \hat{t}\,_m^\If$
		where the points $\hat{t}\,_m^\If$ are those of Assumption~\ref{assPoint_a}
		or~\ref{assPoint_b}, respectively. If~\eqref{def:Ifapp1} and~\eqref{def:Ifapp2a}
		provide $r^{\mathrm{app}}$ independent interpolation conditions and
		$r^{\mathrm{app}} < r+1$, then we choose $r+1-r^{\mathrm{app}}$ further
		points $\hat{t}\,_{m}^\If \in (-1,1) \setminus \{\hat{t}\,_j^\If : j = 0,\ldots, \KIf\}$,
		$m = \KIf + 1, \ldots, \KIf + r +1- r^{\mathrm{app}}$,
		and demand
		}
			(\Ifapp_n \varphi)(t_{n,m}^\If)
				& = \varphi(t_{n,m}^\If)
				&& \quad \qquad \text{for }\KIf + 1 \leq m \leq \KIf + r+1- r^{\mathrm{app}}
		\end{alignat}
		\end{subequations}
		where again $t_{n,m}^\If := \frac{t_n+t_{n-1}}{2} + \frac{\tau_n}{2} \hat{t}_m^{\,\If}$.
		We agree that $\Ifapp_n$ is applied component-wise to vector-valued functions. So, overall
		the conditions~\eqref{def:Ifapp1} and~\eqref{def:Ifapp2} uniquely define an Hermite-type
		interpolation operator of ansatz order $\max\{r^{\mathrm{app}}-1,r\}$.
	\end{definition}
	
	Now, we are able to prove an abstract error estimate. 
	
	\begin{theorem}
		\label{th:errorInt_genNew2}
		Let $r,k \in \ZZ$, $0\leq k \leq r$. We suppose that Assumptions~\ref{assIIhat},
		\ref{assSupII}, and~\ref{assDiff} hold. Moreover, let Assumption~\ref{assPoint_a}
		or~\ref{assPoint_b} be satisfied.
		Denote by $u$ and $U$ the solutions of~\eqref{initValueProb}
		and~\eqref{eq:locProbGen}, respectively.
		Then we have for $1 \leq n \leq N$, sufficiently
		small $\tau$, and $l = 0,1$ that
		\begin{align*}
			\sup_{t \in I_n} \big\|(u-U)^{(l)}(t)\big\|
				& \leq C \max_{1\leq \nu \leq n}
					\Bigg( \sup_{t\in I_\nu} \bignorm{\big(\Id-\Ifapp_\nu\big)u(t)}
					+ \sum_{j=0}^l \sup_{t \in I_\nu} \big\|\big(u - \JJ_\nu u\big)^{(j)}(t)\big\| \Bigg) \\
			& \qquad + C \max_{1\leq \nu \leq n-1} 	\tau_\nu^{-1} \big\|\big(u - \JJ_\nu u\big)(t_\nu^-)\big\|
		\end{align*}
		where the constants $C$ in general exponentially depend on the product of $T$ and $\Cdiff$.
	\end{theorem}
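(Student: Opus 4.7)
The plan is to split the error on each interval $I_n$ as $u-U=\eta-\xi$, where $\eta:=u-\JJ u$ is the interpolation error and $\xi:=U-\JJ u\in P_r(I_n,\RR^d)$ is a polynomial remainder, and to derive a stability-type bound for $\xi$. Subtracting~\eqref{def:JJn} from~\eqref{eq:locProbGen} and using $u'=f(\cdot,u)$ produces the equations for $\xi$: at the Hermite nodes the matching conditions of $\JJ u$ coincide with those of $U$ up to replacing $f(\cdot,U)$ by $f(\cdot,u)$, so $\xi^{(i+1)}(t_n^-)=\frac{\d^i}{\d t^i}(f(t,U(t))-f(t,u(t)))\big|_{t=t_n^-}$ for $0\le i\le\lfloor k/2\rfloor-1$ (and analogously at $t_{n-1}^+$ when $k\ge 3$), and $\xi(t_{n-1}^+)=-(u-U)(t_{n-1}^-)$ when $k\ge 1$; the variational condition becomes $\IIn{(\xi',\varphi)}+\delta_{0,k}(\xi(t_{n-1}^+),\varphi(t_{n-1}^+))=\IIn{(\If_n(f(\cdot,u)-f(\cdot,U)),\varphi)}+\delta_{0,k}((u-U)(t_{n-1}^-),\varphi(t_{n-1}^+))$ for every $\varphi\in P_{r-k}(I_n,\RR^d)$.

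Next, I will apply Lemma~\ref{le:normIneq} for $k\ge 1$ or Lemma~\ref{le:normIneqDG} for $k=0$ to the polynomial $\xi$. These bound $\sup_{t\in I_n}\|\xi(t)\|$ and, via the $\tau_n$-weighted endpoint sums appearing in Lemma~\ref{le:normIneq} combined with the integrator estimate, also $\sup_{t\in I_n}\|\xi'(t)\|$ (needed for $l=1$) by the endpoint-derivative terms and integrator terms of the right-hand side. The endpoint-derivative terms reduce via Assumption~\ref{assDiff} to $\sum_{j\le i}\|(u-U)^{(j)}\|$ at the corresponding endpoint; splitting $u-U=\eta-\xi$ and invoking the inverse inequalities~\eqref{ieq:invIneq} gathers the higher $\xi$-derivatives into a factor $\tau_n\Cdiff\sup_{I_n}\|\xi\|$ (movable to the left for $\tau\le\gamma_{r,k}$), while the $\eta$-contributions are absorbed into $\sum_{j=0}^l\sup_{I_n}\|\eta^{(j)}\|$ (the Hermite conditions defining $\JJ u$ already ensuring that many such endpoint derivative values of $\eta$ vanish). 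For the integrator term I use a three-stage reduction: Assumption~\ref{assSupII} trades the integrator for a weighted sup norm of derivatives of $\If_n(f(\cdot,u)-f(\cdot,U))$; Assumption~\ref{assPoint_a} or~\ref{assPoint_b} converts that sup norm into pointwise derivative values of $f(\cdot,u)-f(\cdot,U)$ at the nodes $t_{n,m}^{\If}$; and Assumption~\ref{assDiff} once more brings it to $\|(u-U)^{(l)}(t_{n,m}^{\If})\|$. The Hermite operator $\Ifapp_n$ of Definition~\ref{def:auxIfapp} now plays its decisive role: by~\eqref{def:Ifapp2a}, $(u-\Ifapp_n u)^{(l)}(t_{n,m}^{\If})=0$ for every $l\le\KTIf_m$, so writing $u-U=(u-\Ifapp_n u)+(\Ifapp_n u-\JJ u)-\xi$ at these nodes the first piece vanishes, the middle one is a polynomial of bounded degree whose derivatives at the nodes are controlled via the inverse inequality by $\sup_{I_n}\|(\Id-\Ifapp_n)u\|+\sup_{I_n}\|\eta\|$, and the $\xi$ piece again contributes only to the absorbable $\tau_n\Cdiff\sup_{I_n}\|\xi\|$.

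Collecting these bounds yields, for sufficiently small $\tau$, a local inequality of the form $\sup_{I_n}\|\xi\|\le C\|\xi(t_{n-1}^+)\|+C\bigl(\sup_{I_n}\|(\Id-\Ifapp_n)u\|+\sum_{j=0}^1\sup_{I_n}\|\eta^{(j)}\|\bigr)$, and for $k\ge 1$ the jump obeys $\|\xi(t_{n-1}^+)\|=\|(u-U)(t_{n-1}^-)\|\le\|\eta(t_{n-1}^-)\|+\sup_{I_{n-1}}\|\xi\|$. The main obstacle is turning this two-level recursion into a bound whose constant is exponential in $T\Cdiff$ rather than blowing up like $(1+C)^{T/\tau}$; this requires a discrete Gronwall argument in which the coefficient coupling $\sup_{I_n}\|\xi\|$ to $\sup_{I_{n-1}}\|\xi\|$ is $1+O(\tau_{n-1})$, and the factor $\tau_\nu^{-1}$ in front of $\|(u-\JJ_\nu u)(t_\nu^-)\|$ in the stated bound is generated by the standard reduction $\sum_{\nu=1}^{n-1}\|\eta(t_\nu^-)\|=\sum_{\nu=1}^{n-1}\tau_\nu\cdot\tau_\nu^{-1}\|\eta(t_\nu^-)\|\le T\max_\nu\tau_\nu^{-1}\|\eta(t_\nu^-)\|$ once Gronwall has accumulated the jump contributions of all preceding intervals. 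Once $\sup_{I_n}\|\xi\|$ and $\sup_{I_n}\|\xi'\|$ are controlled this way, the triangle inequality $\sup_{I_n}\|(u-U)^{(l)}\|\le\sup_{I_n}\|\eta^{(l)}\|+\sup_{I_n}\|\xi^{(l)}\|$ concludes the proof for $l=0,1$.
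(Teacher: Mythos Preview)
Your local analysis is essentially the paper's: the splitting $u-U=\eta-\xi$, the use of the $\xi$-equations, the reduction of the endpoint and integrator terms via Assumptions~\ref{assSupII}, \ref{assDiff}, \ref{assPoint_a}/\ref{assPoint_b}, and the role of $\Ifapp_n$ in replacing pointwise derivative values of $u-U$ by polynomial quantities amenable to inverse inequalities, all match. Your three-way split $u-U=(u-\Ifapp_n u)+(\Ifapp_n u-\JJ u)-\xi$ at the nodes is a harmless variant of the paper's $u-U=(u-\Ifapp_n u)+(\Ifapp_n u-U)$.

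The gap is in the global step, specifically for $k=0$. For $k\ge1$ the second inequality of Lemma~\ref{le:normIneq} is a fundamental-theorem bound with coefficient exactly~$1$ on $\|\xi(t_{n-1}^+)\|$, so after absorbing the $C\Cdiff\tau_n\sup_{I_n}\|\xi\|$ term you indeed get a recursion with coupling $1+O(\Cdiff\tau_n)$ and Gronwall closes. But for $k=0$ you invoke Lemma~\ref{le:normIneqDG}, which yields
\[
\sup_{I_n}\|\xi\|\le\kappa_r\sum_{i=0}^r\bigl\|\IIn{\xi'(1+T_n^{-1})^i}+\delta_{0,i}\xi(t_{n-1}^+)\bigr\|
\le\kappa_r\|(u-U)(t_{n-1}^-)\|+O(\Cdiff\tau_n)(\ldots),
\]
and $\kappa_r$ is a norm-equivalence constant on $P_r$, in general strictly larger than~$1$. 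Iterating this over $N\sim T/\tau$ intervals produces a factor $\kappa_r^{\,N}$, not $\exp(C\Cdiff T)$. You correctly flag the obstacle but do not resolve it; your written local inequality ``$\sup_{I_n}\|\xi\|\le C\|\xi(t_{n-1}^+)\|+C(\ldots)$'' with a generic $C$ is precisely the form that fails.

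The paper sidesteps this entirely by never writing a two-term recursion. It uses the global fundamental-theorem representation
\[
\sup_{I_n}\|\zeta\|\le\sum_{\nu=1}^n\Bigl(\int_{I_\nu}\|\zeta'\|\,\d s+\bigl\|[\zeta]_{\nu-1}\bigr\|\Bigr)
\]
for \emph{all} $k$, bounds each $\int_{I_\nu}\|\zeta'\|$ via the \emph{first} inequality of Lemma~\ref{le:normIneq} (which applies also when $k=0$) by $C\Cdiff\tau_\nu(\ldots+E_\nu)$, and computes the jump $[\zeta]_{\nu-1}$ directly: for $k\ge1$ it is $\eta(t_{\nu-1}^-)$, and for $k=0$ a short manipulation using~\eqref{eq:zeta_Var} shows it is $\eta(t_{\nu-1}^-)$ plus two integrator terms already known to be $O(\Cdiff\tau_\nu)(\ldots+E_\nu)$. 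This yields the additive form $E_n\le\sup\|\eta\|+\widetilde C\sum_{\nu}\tau_\nu E_\nu+\sum_\nu\|\eta(t_{\nu-1}^-)\|$, to which the discrete Gronwall lemma applies with the claimed exponential constant; the $\tau_\nu^{-1}$ factor then appears exactly as you describe.
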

	\begin{proof}
		In order to shorten the notation of the proof, we set $E_n := \sup_{t\in I_n} \big\|u(t) - U(t)\big\|$.
		Using the approximation $\JJ u$, which has been introduced directly below
		Lemma~\ref{le:approxJJn}, we split the error in two parts
		\begin{gather*}
			\eta(t):=u(t)-\JJ u(t),\qquad
			\zeta(t):=\JJ u(t)-U(t).
		\end{gather*}
		Then the triangle inequality yields
		\begin{gather}
			\label{help:errorSplit_gen}
			E_n = \sup_{t \in I_n} \big\|u(t)-U(t)\big\|
				\leq \sup_{t \in I_n} \big\|\eta(t)\big\| + \sup_{t \in I_n} \big\|\zeta(t)\big\|.
		\end{gather}
		The first term on the right-hand side is the approximation error of the operator $\JJ$ and
		shall be analyzed separately later on.
		It remains to study the second term.
		
		Since $\zeta$ is continuously differentiable on
		each time interval $I_\nu$, $\nu=1,\ldots,N$, we have for $t \in I_n$
		\begin{align*}
			\zeta(t)
				& = \zeta(t_0^-) + \sum_{\nu = 1}^{n-1} \bigg(\int_{I_\nu} \zeta'(s)\,\d s + \big[\zeta\big]_{\nu-1}\bigg)
					+ \bigg(\int_{t_{n-1}}^t \zeta'(s)\,\d s + \big[\zeta\big]_{n-1}\bigg) \\
				& \leq \sum_{\nu=1}^n \bigg( \int_{I_\nu} |\zeta'(s)|\,\d s + \big[\zeta\big]_{\nu-1}\bigg).
		\end{align*}
		Here, the integrals and the modulus have to be applied component-wise. Furthermore,
		$\zeta(t_0^-)=0$ due to $U(t_0^-) = u_0 = u(t_0^-)$ and $\JJ u(t_0^-) = u(t_0^-)$, see~\eqref{eq:locProbGen}
		and directly below Lemma~\ref{le:approxJJn}. Hence, by the triangle inequality it follows
		\begin{align}
			\sup_{t \in I_n} \norm{\zeta(t)}
				\leq \bignorm{\sum_{\nu=1}^n \bigg( \int_{I_\nu} |\zeta'(s)|\,\d s + \big[\zeta\big]_{\nu-1}\bigg)}
				& \leq \sum_{\nu=1}^n \bigg( \bignorm{  \int_{I_\nu} |\zeta'(s)|\,\d s} + \big\|\big[\zeta\big]_{\nu-1}\big\| \bigg) \nonumber \\
				& \leq \sum_{\nu=1}^n \bigg( \int_{I_\nu} \big\|\zeta'(s)\big\|\,\d s + \big\|\big[\zeta\big]_{\nu-1}\big\| \bigg).
					\label{help:errorZeta_gen}
		\end{align}
		
		We start analyzing the integral term of~\eqref{help:errorZeta_gen}.
		For $1 \leq n \leq N$ and because of $\left. \zeta \right|_{I_n} \in P_{r}(I_n,\RR^d)$
		we can apply Lemma~\ref{le:normIneq} to get
		\begin{align}
			\label{help:errorDerZeta_gen}
			& \int_{I_n} \big\| \zeta'(t) \big\| \, \d t \leq \tau_n \sup_{t \in I_n} \big\| \zeta'(t) \big\| \\
			& \quad \leq \kappa_{r,k} \Bigg( 
						\underbrace{\sum_{i=1}^{\lfloor{\frac{k-1}{2}}\rfloor} \!\left(\tfrac{\tau_n}{2}\right)^{i}
							\big\| \zeta^{(i)}(t_{n-1}^+)\big\|
						+ \sum_{i=1}^{\lfloor{\frac{k}{2}}\rfloor} \!\left(\tfrac{\tau_n}{2}\right)^{i}
							\big\| \zeta^{(i)}(t_n^-)\big\|}_{\mathrm{(I)}}
					+ \underbrace{\sum_{i=\delta_{0,k}}^{r-k} \left\| \IIn{\zeta'(t) \, (1+T_n^{-1}(t))^i } \right\|}_{\mathrm{(II)}}
				\Bigg). \nonumber
		\end{align}
		The right-hand side terms are now studied separately.
		
		The summands of $\mathrm{(I)}$ are basically of the form
		$\left(\tfrac{\tau_n}{2}\right)^{i+1} \bignorm{\zeta^{(i+1)}(\tilde{t})}$ where
		$\tilde{t} \in \{t_{n-1}^+,t_n^-\}$, $i \geq 0$. The definitions of $\JJn u$
		and $U$, especially~\eqref{def:JJn_A}, \eqref{def:JJn_E}
		and~\eqref{eq:locProbGenE}, \eqref{eq:locProbGenA}, together with
		the $i$th derivative of the ode system~\eqref{initValueProb} yield for the
		appearing summands
		\begin{gather*}
			\zeta^{(i+1)}(\tilde{t})
				= (\JJn u-U)^{(i+1)}(\tilde{t})
				= (u-U)^{(i+1)}(\tilde{t})
				= \frac{\d^i}{\d t^i} \Big(f\big(t,u(t)\big)-f\big(t,U(t)\big)\Big)\Big|_{t=\tilde{t}}
		\end{gather*}
		for $0 \le i \le \tilde{k}$ where
		\begin{gather*}
			\tilde{k} =  \tilde{k}(\tilde{t}) =
			\begin{cases}
				\left\lfloor{\frac{k}{2}}\right\rfloor-1,		& \text{if } \tilde{t} = t_{n}^-, \\
				\left\lfloor{\frac{k-1}{2}}\right\rfloor-1,		& \text{if } \tilde{t} = t_{n-1}^+,
			\end{cases}
		\end{gather*}
                denotes the decremented upper limits of the sums in $\mathrm{(I)}$
				depending on $\tilde{t}$.
		By Assumption~\ref{assDiff} we obtain
		\begin{align*}
			\sum_{i=0}^{\tilde{k}} \left(\tfrac{\tau_n}{2}\right)^{i+1}
				\bignorm{\zeta^{(i+1)}(\tilde{t})}
			& = \frac{\tau_n}{2} \sum_{i=0}^{\tilde{k}} \left(\tfrac{\tau_n}{2}\right)^{i}
				\bignorm{\frac{\d^i}{\d t^i} \Big(f\big(t,u(t)\big)-f\big(t,U(t)\big)\Big)\Big|_{t=\tilde{t}}} \\
			& \leq \Cdiff \frac{\tau_n}{2} \sum_{i=0}^{\tilde{k}} \left(\tfrac{\tau_n}{2}\right)^{i}
				\sum_{l=0}^i \left\| (u-U)^{(l)}(\tilde{t})\right\| \\
			& \leq \Cdiff \max\{0,(\tilde{k}+1)\} \frac{\tau_n}{2}
				\sum_{l=0}^{\tilde{k}} \left(\tfrac{\tau_n}{2}\right)^{l} \left\| (u-U)^{(l)}(\tilde{t})\right\|.
		\end{align*}
		Exploiting the Hermite interpolation operator $\Ifapp_n$
		that especially preserves derivatives up to order $\left\lfloor{\frac{k}{2}}\right\rfloor -1$
		in $t_n^-$ and up to order $\left\lfloor{\frac{k-1}{2}}\right\rfloor-1$ in $t_{n-1}^+$,
		see~\eqref{def:Ifapp1}, as well as invoking the inverse inequality~\eqref{ieq:invIneq},
		we find for $0 \leq l \leq \tilde{k}$
		\begin{gather}
		\label{help:treatPointDer_gen}
		\begin{aligned}
			\big\|(u-U)^{(l)}(\tilde{t})\big\|
				& = \big\|(\Ifapp_n u-U)^{(l)}(\tilde{t})\big\|
					\leq \sup_{t\in I_n} \big\|(\Ifapp_n u-U)^{(l)}(t)\big\| \\
				& \leq C_{\mathrm{inv}} \left(\tfrac{\tau_n}{2}\right)^{-l} \sup_{t\in I_n} \big\|(\Ifapp_n u-U)(t)\big\| \\[-0.8em]
				& \leq C_{\mathrm{inv}} \left(\tfrac{\tau_n}{2}\right)^{-l}
					\bigg( \sup_{t\in I_n} \big\| (\Ifapp_n u-u)(t) \big\|
						+ \overbrace{\sup_{t\in I_n} \big\| (u-U)(t) \big\|}^{= E_n} \bigg).
		\end{aligned}
		\end{gather}
		Overall this implies
		\begin{gather}
			\label{help:error(I)_gen}
			\mathrm{(I)}
				\leq \Cdiff C_{\mathrm{inv}}
					\underbrace{\big( \max\{0,\lfloor \tfrac{k-1}{2} \rfloor\}^2 + \lfloor \tfrac{k}{2} \rfloor^2 \big)
						}_{\leq \lfloor \frac{k}{2} \rfloor (k-1)}
					\frac{\tau_n}{2} \bigg(\sup_{t\in I_n} \big\| (u-\Ifapp_n u)(t) \big\| + E_n\bigg).
		\end{gather}
		
		In order to bound $\mathrm{(II)}$ in~\eqref{help:errorDerZeta_gen} we first
		of all note that by definition of $\JJn u$, especially~\eqref{def:JJn_Var},
		we gain for all $\varphi\in P_{r-k}(I_n,\mathbb{R}^d)$ that
		\begin{multline*}
			\IIn{ \Big( (\JJn u)'(t), \varphi(t) \Big) } + \delta_{0,k} \Big(\JJn u(t_{n-1}^+), \varphi(t_{n-1}^+)\Big)\\
			\begin{aligned}
				& = \IIn{ \Big( \If_n u'(t), \varphi(t) \Big) } + \delta_{0,k} \Big(u(t_{n-1}^+), \varphi(t_{n-1}^+)\Big) \\
				& = \IIn{ \Big( \If_n f(t,u(t)),\varphi(t)\Big) } + \delta_{0,k} \Big(u(t_{n-1}^-), \varphi(t_{n-1}^+)\Big)
			\end{aligned}
		\end{multline*}
		where the initial value problem~\eqref{initValueProb} and the continuity
		of $u$ are used in the last identity. So, subtracting the variational
		equation of the local problem~\eqref{eq:locProbGenVar} we obtain for
		all $\varphi\in P_{r-k}(I_n,\RR^d)$
		\begin{multline}
		\label{eq:zeta_Var}
			\IIn{ \Big( \zeta'(t),\varphi(t)\Big) } + \delta_{0,k} \Big(\zeta(t_{n-1}^+), \varphi(t_{n-1}^+)\Big) \\
		\begin{aligned}
			& = \IIn{ \Big( (\JJn u)'(t)-U'(t),\varphi(t)\Big) } + \delta_{0,k} \Big(\big(\JJn u - U\big)(t_{n-1}^+), \varphi(t_{n-1}^+)\Big) \\
			& = \IIn{ \Big( \If_n f\big(t,u(t)\big) - \If_n f\big(t,U(t)\big), \varphi(t)\Big) }
				+ \delta_{0,k} \Big(\big(u - U\big)(t_{n-1}^-), \varphi(t_{n-1}^+)\Big).
		\end{aligned}
		\end{multline}
		Using test functions of the form $\varphi(t) = (1+T_n^{-1}(t))^i e_j$, $j = 1,\ldots, d$,
		we get by a component-wise derivation that
		\begin{gather}
		\label{help:errorIntegrator1_gen}
			\mathrm{(II)}
				= \!\sum_{i=\delta_{0,k}}^{r-k}\! \left\| \IIn{\zeta'(t) \, (1+T_n^{-1}(t))^i } \right\|
				= \!\sum_{i=\delta_{0,k}}^{r-k}\!
					\left\| \IIn{ \big( \If_n f\big(t,u(t)\big) - \If_n f\big(t,U(t)\big) \big) \, (1+T_n^{-1}(t))^i } \right\|\!.
		\end{gather}
		
		For the summands on the right-hand side, we consider two different cases. If
		Assumption~\ref{assPoint_a} holds, we first conclude by Assumption~\ref{assSupII}
		and Leibniz' rule for the $j$th derivative that
		\begin{align*}
			& \left\| \IIn{ \big( \If_n f\big(t,u(t)\big) - \If_n f\big(t,U(t)\big) \big) \, (1+T_n^{-1}(t))^i } \right\| \\
			& \quad \leq \CII \frac{\tau_n}{2} \sum_{j=0}^{\kII} \left(\tfrac{\tau_n}{2}\right)^j
				\sup_{t \in I_n} \left\| \frac{\d^j}{\d t^j}
					\Big( \big(\If_n f\big(t,u(t)\big)-\If_n f\big(t,U(t)\big)\big) \, (1+T_n^{-1}(t))^i \Big) \right\| \\
			& \quad \leq \CII \frac{\tau_n}{2} \sum_{j=0}^{\kII} \sum_{l=0}^j \tbinom{j}{l} \left(\tfrac{\tau_n}{2}\right)^j
				\sup_{t \in I_n} \left\| \frac{\d^{l}}{\d t^{l}} \Big(\If_n f\big(t,u(t)\big)-\If_n f\big(t,U(t)\big)\Big) \right\|
				\, \sup_{t \in I_n} \left\| \big((1+T_n^{-1}(t))^i\big)^{(j-l)} \right\| \\
			& \quad = \CII \frac{\tau_n}{2} \sum_{l=0}^{\kII} \left(\tfrac{\tau_n}{2}\right)^{l}
				\sup_{t \in I_n} \left\| \frac{\d^{l}}{\d t^{l}} \Big(\If_n f\big(t,u(t)\big)-\If_n f\big(t,U(t)\big)\Big) \right\|
					\sum_{j=0}^{\min\{i,\kII-l\}} \tbinom{j+l}{l} \tfrac{i!}{(i-j)!} 2^{i-j}
		\end{align*}
		where we refer for details regarding the derivation of the last identity
		to the proof of Theorem~\ref{th:solvableInt_genNew}. Then involving the
		estimate of Assumption~\ref{assPoint_a}, we obtain that
		\begin{multline*}
			\left\| \IIn{ \big( \If_n f\big(t,u(t)\big) - \If_n f\big(t,U(t)\big) \big) \, (1+T_n^{-1}(t))^i } \right\| \\
			\begin{aligned}
			\leq \CII C_{\Sigma,i} \frac{\tau_n}{2}
				\Bigg( & \Cpoint \sum_{m=0}^{\KIf} \sum_{j=0}^{\KTIf_m} \left(\tfrac{\tau_n}{2}\right)^{j}
					\left\| \frac{\d^j}{\d t^j} \Big(f\big(t,u(t)\big)-f\big(t,U(t)\big)\Big)\Big|_{t=t_{n,m}^\If}\right\| \\
				& \qquad + \Cpointt \sup_{t \in I_n} \big\| f\big(t,u(t)\big)-f\big(t,U(t)\big)\big\| \Bigg),
			\end{aligned}
		\end{multline*}
		with $C_{\Sigma,i}$ from~\eqref{def:CSigma}. A quite similar argumentation also
		works when only Assumption~\ref{assPoint_b} holds. In this case we gain
		\begin{align*}
			& \left\| \IIn{ \big( \If_n f\big(t,u(t)\big) - \If_n f\big(t,U(t)\big) \big) \, (1+T_n^{-1}(t))^i } \right\| \\
			& \quad \leq \CIIpointB \frac{\tau_n}{2}
				\sum_{m=0}^{\KII} \sum_{j=0}^{\KTII_m} \left(\tfrac{\tau_n}{2}\right)^{j}
					\left\| \frac{\d^j}{\d t^j} \Big(\big( \If_n f\big(t,u(t)\big) - \If_n f\big(t,U(t)\big) \big) \, (1+T_n^{-1}(t))^i\Big)
						\Big|_{t=t_{n,m}^{\mathscr{I}}}\right\| \\
			& \hspace{16em} + \CIIpointtB \frac{\tau_n}{2}
				\sup_{t \in I_n} \big\| \big( \If_n f\big(t,u(t)\big) - \If_n f\big(t,U(t)\big) \big) \, (1+T_n^{-1}(t))^i \big\| \\
			& \quad \leq \CIIpointB \frac{\tau_n}{2}
				\sum_{m=0}^{\KII} \sum_{l=0}^{\KTII_m} \left(\tfrac{\tau_n}{2}\right)^{l}
					\left\| \frac{\d^l}{\d t^l} \Big(\If_n f\big(t,u(t)\big) - \If_n f\big(t,U(t)\big) \Big)
						\Big|_{t=t_{n,m}^{\mathscr{I}}}\right\|
					\sum_{j=0}^{\min\{i,\KTII_m-l\}} \tbinom{j+l}{l} \tfrac{i!}{(i-j)!} 2^{i-j} \\
			& \hspace{16em} + \CIIpointtB \frac{\tau_n}{2}
				\sup_{t \in I_n} \big\| \big( \If_n f\big(t,u(t)\big) - \If_n f\big(t,U(t)\big) \big) \big\| \, 2^i \\
			& \quad \leq \max\{\CIIpointB \kII! 3^i,\CIIpointtB 2^i \} \frac{\tau_n}{2}
				\Bigg( \CpointB \sum_{m=0}^{\KIf} \sum_{j=0}^{\KTIf_m} \left(\tfrac{\tau_n}{2}\right)^{j}
					\left\| \frac{\d^j}{\d t^j} \Big(f\big(t,u(t)\big) - f\big(t,U(t)\big) \Big)
						\Big|_{t=t_{n,m}^{\If}}\right\| \\
			& \hspace{16em} + \CpointtB \sup_{t \in I_n} \big\| \big( f\big(t,u(t)\big) - f\big(t,U(t)\big) \big) \big\| \Bigg).
		\end{align*}
		So, either way applying Assumption~\ref{assDiff} gives
		\begin{align*}
			& \left\| \IIn{ \big( \If_n f\big(t,u(t)\big) - \If_n f\big(t,U(t)\big) \big) \, (1+T_n^{-1}(t))^i } \right\| \\
			& \quad \leq C \Cdiff \frac{\tau_n}{2}
				\Bigg( \sum_{m=0}^{\KIf} \sum_{j=0}^{\KTIf_m} \left(\tfrac{\tau_n}{2}\right)^{j}
					\sum_{l=0}^j \big\| (u-U)^{(l)}(t_{n,m}^{\If})\big\|
				+ \sup_{t \in I_n} \big\| (u-U)(t) \big\| \Bigg) \\
			& \quad \leq C \Cdiff \frac{\tau_n}{2}
				\Bigg( \max_{0 \leq m \leq \KIf}\{\KTIf_m+1\}\sum_{m=0}^{\KIf} \sum_{l=0}^{\KTIf_m} \left(\tfrac{\tau_n}{2}\right)^{l}
					\big\| (u-U)^{(l)}(t_{n,m}^{\If})\big\|
				+ \sup_{t \in I_n} \big\| (u-U)(t) \big\| \Bigg).
		\end{align*}
		
		The terms that include derivatives can be estimated similar
		to~\eqref{help:treatPointDer_gen} since by definition the interpolation
		operator $\Ifapp_n$ also preserves the particular derivatives at the
		particular points $t_{n,m}^\If$, see~\eqref{def:Ifapp2a}.
		Therefore, altogether this results in
		\begin{equation}
			\label{help:errorIntegrator3_gen}
			\left\| \IIn{ \big( \If_n f\big(t,u(t)\big) - \If_n f\big(t,U(t)\big) \big) \, (1+T_n^{-1}(t))^i } \right\|
				\leq C \Cdiff \frac{\tau_n}{2}
					\bigg( \sup_{t\in I_n} \bignorm{\big(\Id-\Ifapp_n\big)u(t)} + E_n \bigg)
		\end{equation}
		and, hence, together with~\eqref{help:errorIntegrator1_gen}
		we obtain
		\begin{equation}
			\label{help:error(II)_gen}
			\mathrm{(II)}
				\leq C \Cdiff \frac{\tau_n}{2} \bigg( \sup_{t\in I_n} \bignorm{\big(\Id-\Ifapp_n\big)u(t)} + E_n \bigg).
		\end{equation}
		
		So, combining~\eqref{help:errorDerZeta_gen} with~\eqref{help:error(I)_gen}
		and~\eqref{help:error(II)_gen}, we have already shown that
		\begin{equation}
			\label{ieq:zetaDerBound}
			\int_{I_n} \big\| \zeta'(t) \big\| \, \d t
				\leq \tau_n \sup_{t \in I_n} \big\| \zeta'(t) \big\|
				\leq C \Cdiff \frac{\tau_n}{2} \bigg( \sup_{t\in I_n} \bignorm{\big(\Id-\Ifapp_n\big)u(t)} + E_n \bigg)
		\end{equation}
		for all $1 \leq n \leq N$.
		
		Next, we analyze the jump term of~\eqref{help:errorZeta_gen}. First, we have
		a closer look at $\big[\zeta\big]_{n-1}$ for $1 \leq n \leq N$. There are two cases.
		If $k \geq 1$, the discrete solution $U$ is globally continuous due
		to~\eqref{eq:locProbGenI}. So,
		\begin{align*}
			\big[\zeta\big]_{n-1}
				= \big[\JJ u\big]_{n-1} - \big[U\big]_{n-1}
				= \JJ u(t_{n-1}^+) - \JJ u(t_{n-1}^-)
			  & = u(t_{n-1}^+) - \JJ u(t_{n-1}^-) \\
			  & = u(t_{n-1}^-) - \JJ u(t_{n-1}^-)
		\end{align*}
		where also~\eqref{def:JJn_A} and the continuity of $u$ was used.
		If $k=0$ we rewrite the jump term as follows
		\begin{align*}
			\big[\zeta\big]_{n-1}
				& = \IIn{\zeta'(t)} + \zeta(t_{n-1}^+) - \zeta(t_{n-1}^-) - \IIn{\zeta'(t)} \\
				& = \IIn{ \If_n f\big(t,u(t)\big) - \If_n f\big(t,U(t)\big) }
					+ (u - U)(t_{n-1}^-) - (\JJ u - U)(t_{n-1}^-) - \IIn{\zeta'(t)} \\
				& = u(t_{n-1}^-) - \JJ u(t_{n-1}^-)
					+ \IIn{ \If_n f\big(t,u(t)\big) - \If_n f\big(t,U(t)\big) } - \IIn{\zeta'(t)}
		\end{align*}
		where we exploited~\eqref{eq:zeta_Var}. The integrator terms on the right-hand side
		can be bounded using already known estimates. Indeed, \eqref{help:errorIntegrator3_gen}
		with $i=0$ yields
		\begin{gather*}
			\left\| \IIn{ \big( \If_n f\big(t,u(t)\big) - \If_n f\big(t,U(t)\big) \big) } \right\|
				\leq C \Cdiff \frac{\tau_n}{2}
					\bigg( \sup_{t\in I_n} \bignorm{\big(\Id-\Ifapp_n\big)u(t)} + E_n \bigg).
		\end{gather*}
		Furthermore, combining Assumption~\ref{assSupII}, the inverse inequality~\eqref{ieq:invIneq},
		and~\eqref{ieq:zetaDerBound} we gain
		\begin{align*}
			\big\|\IIn{\zeta'(t)}\big\|
				\leq \CII \frac{\tau_n}{2} \sum_{j=0}^{\kII} \left(\tfrac{\tau_n}{2}\right)^j
					\sup_{t \in I_n} \big\| \zeta^{(j+1)}(t)\big\|
			  & \leq \CII C_\mathrm{inv} \left(\kII +1 \right) \frac{\tau_n}{2} \sup_{t \in I_n} \big\| \zeta'(t)\big\| \\
			  & \leq C \Cdiff \frac{\tau_n}{2} \bigg( \sup_{t\in I_n} \bignorm{\big(\Id-\Ifapp_n\big)u(t)} + E_n \bigg).
		\end{align*}
		Hence, in both cases we have
		\begin{align}
			\big\|\big[\zeta\big]_{n-1}\big\|
				& \leq \big\|u(t_{n-1}^-) - \JJ u(t_{n-1}^-)\big\|
					+ \delta_{0,k} \Big( \big\|\IIn{ \If_n f\big(t,u(t)\big) - \If_n f\big(t,U(t)\big) }\big\|
						+ \big\|\IIn{\zeta'(t)}\big\| \Big) \nonumber \\
				& \leq \big\|u(t_{n-1}^-) - \JJ u(t_{n-1}^-)\big\|
					+ \delta_{0,k} C \Cdiff \frac{\tau_n}{2} \bigg( \sup_{t\in I_n} \bignorm{\big(\Id-\Ifapp_n\big)u(t)} + E_n \bigg).
					\label{ieq:zetaJumpBound}
		\end{align}
		
		Summarizing, we get from~\eqref{help:errorSplit_gen}, \eqref{help:errorZeta_gen},
		\eqref{ieq:zetaDerBound}, and~\eqref{ieq:zetaJumpBound} for $1 \leq n \leq N$
		\begin{align*}
			E_n
			  & \leq \sup_{t \in I_n} \big\|\eta(t)\big\| + \sup_{t \in I_n} \big\| \zeta(t) \big\|
				\leq \sup_{t \in I_n} \big\|\eta(t)\big\| +
					\sum_{\nu=1}^n \bigg( \int_{I_\nu} \big\|\zeta'(s)\big\|\,\d s + \big\|\big[\zeta\big]_{\nu-1}\big\| \bigg) \\
			  & \leq \sup_{t \in I_n} \big\|\eta(t)\big\|
			  	+ \underbrace{C \Cdiff}_{=\widetilde{C}}
			  		\sum_{\nu = 1}^n \frac{\tau_\nu}{2} \bigg( \sup_{t\in I_\nu} \bignorm{\big(\Id-\Ifapp_\nu\big)u(t)} + E_\nu \bigg)
			  	+ \sum_{\nu=1}^{n} \big\|u(t_{\nu-1}^-) - \JJ u(t_{\nu-1}^-)\big\|.
		\end{align*}		
		Note that $\widetilde{C}$ is independent of $T$ but especially depends multiplicatively
		on the Lipschitz constant of $f$ (hidden in $\Cdiff$). For $\tau_n$
		sufficiently small ($\widetilde{C} \tau_n/2 < 1$),
		the $E_n$ term of the right-hand side can be absorbed on the left. After
		that the application of a variant of the discrete Gronwall lemma,
		see~\cite[Lemma~1.4.2, p.~14]{QV08}, applied with
		\begin{align*}
			k_s & = \tfrac{\widetilde{C}\tau_s/2}{1-\widetilde{C}\tau/2} (1-\delta_{0,s}), 
				\qquad \phi_s = E_s,
				\qquad g_0 = 0, \\
			p_s & = \textstyle \frac{1}{1-\widetilde{C}\tau/2} \Big(
				\sup_{t \in (t_0,t_{s+1}]} \big\|\eta(t)\big\| - \sup_{t \in (t_0,t_s]} \big\|\eta(t)\big\| \\
				& \hspace{8em} \textstyle + \widetilde{C} \tau_{s+1} \sup_{t\in I_{s+1}} \bignorm{\big(\Id-\Ifapp_{s+1}u\big)u(t)}
				+ {\big\|u(t_{s}^-) - \JJ u(t_{s}^-)\big\|} \Big),
		\end{align*}
		yields
		\begin{align*}
			E_n & \leq \exp\left(\sum_{\nu=1}^{n-1}\frac{\widetilde{C}\tau_\nu/2}{1-\widetilde{C}\tau/2}\right)
					\frac{1}{1-\widetilde{C}\tau/2} \\
				& \qquad \Bigg\{\!\max_{1 \leq \nu \leq n} \bigg(\sup_{t \in I_\nu} \big\|\eta(t)\big\|\bigg)
					+ \sum_{\nu = 1}^n \bigg(\widetilde{C}\frac{\tau_\nu}{2} \sup_{t\in I_\nu} \bignorm{\big(\Id-\Ifapp_\nu\big)u(t)}
					+ \big\|u(t_{\nu-1}^-) - \JJ u(t_{\nu-1}^-)\big\| \bigg) \!\Bigg\} \\
				& \leq \exp\left(\frac{\widetilde{C}(t_{n-1}-t_0)/2}{1-\widetilde{C}\tau/2}\right)
					\frac{1}{1-\widetilde{C}\tau/2} 
					\Bigg\{ \widetilde{C}(t_{n}-t_0)/2 \max_{1 \leq \nu \leq n}
						\bigg( \sup_{t\in I_\nu} \bignorm{\big(\Id-\Ifapp_\nu\big)u(t)} \bigg) \\
				& \qquad + \max_{1 \leq \nu \leq n} \bigg( \sup_{t \in I_\nu} \big\|\big(\Id - \JJ_\nu\big)u(t)\big\| \bigg)
					+ (t_{n-1}-t_0) \max_{1 \leq \nu \leq n-1} \bigg( \tau_{\nu}^{-1} \big\|u(t_{\nu}^-) - \JJ_\nu u(t_{\nu}^-)\big\| \bigg) \! \Bigg\}
		\end{align*}
		where we also used that $u(t_{0}^-) = u_0 = \JJ u(t_{0}^-)$.
		This already is the wanted statement for $l = 0$. Especially note the
		exponential dependency on $T$
		and the Lipschitz constant of $f$ (hidden in $\widetilde{C}$, $\Cdiff$).
		
		Finally, we shall derive a bound for the first derivative of the error.
		Recalling the estimate for $\zeta'$ in~\eqref{ieq:zetaDerBound} it follows
		\begin{align*}
			\sup_{t \in I_n} \big\| (u-U)'(t)\big\|
				& \leq \sup_{t \in I_n} \big\| (u-\JJn u)'(t)\big\| + \sup_{t \in I_n} \big\| (\JJn u -U)'(t)\big\| \\
			\quad & \leq \sup_{t \in I_n} \big\| (u-\JJn u)'(t)\big\| + C \Cdiff
				\bigg( \sup_{t\in I_n} \bignorm{\big(\Id-\Ifapp_n\big)u(t)} + E_n \bigg).
		\end{align*}
		Using the already known estimate for $E_n$, the statement for $l=1$ follows.
	\end{proof}
	
	\begin{remark}
		Based on Theorem~\ref{th:errorInt_genNew2} we can also prove abstract estimates
		for higher order derivatives of the error between the solution $u$
		of~\eqref{initValueProb} and the discrete solution $U$ of~\eqref{eq:locProbGen}.
		Of course, we gain that
		\begin{multline*}
			\sup_{t \in I_n} \big\|(u-U)^{(l)}(t)\big\|
				\leq \sup_{t \in I_n} \big\|(u-\JJn u)^{(l)}(t)\big\|
					+ \sup_{t \in I_n} \big\|(\JJn u - U)^{(l)}(t)\big\| \\
                        \begin{aligned}
			& \quad \leq \sup_{t \in I_n} \big\|(u-\JJn u)^{(l)}(t)\big\|
					+ C_\mathrm{inv} \left(\tfrac{\tau_n}{2}\right)^{-l} \sup_{t \in I_n} \big\|(\JJn u - U)(t)\big\| \\
			& \quad \leq \sup_{t \in I_n} \big\|(u-\JJn u)^{(l)}(t)\big\|
					+ C_\mathrm{inv} \left(\tfrac{\tau_n}{2}\right)^{-l} \left(\sup_{t \in I_n} \big\|(\JJn u - u)(t)\big\|
						+ \sup_{t \in I_n} \big\|(u - U)(t)\big\|\right)\!
                        \end{aligned}
		\end{multline*}
		where an inverse inequality was used.
		However, since we only have a non-local error estimate for $\sup_{t \in I_n} \big\| (u-U)(t) \big\|$
		we cannot expect that the inverse of the local time step length can be compensated in general.
		So, usually we additionally need to assume that $\tau_\nu \leq \tau_{\nu+1}$
		for all $\nu$ or alternatively that the mesh is quasi uniform
		($\tau/\tau_\nu \leq C$ for all $\nu$) to obtain a proper estimate.
	\end{remark}
	
	\begin{remark}
		In the proof of Theorem~\ref{th:errorInt_genNew2} stiffness
		of the problem would be critical at several points.
		
		Indeed, for large Lipschitz constants (hidden in $\Cdiff$ and so
		in $\widetilde{C}$) the needed inequality $\widetilde{C} \tau_n/2 < 1$
		would force very small time step lengths. For semidiscretizations in
		space of time-space problems, where the Lipschitz constant is typically
		proportional to $h^{-2}$ with $h$ denoting the spatial mesh parameter,
		this would cause upper bounds on the time step length with respect to $h$
		similar to CFL conditions.
		
		Moreover, since the error constant $C$ exponentially depends on $\Cdiff$,
		this constant would be excessively large for stiff problems such that
		the error estimate would be useless in this case.
	\end{remark}
	
	Of course, Theorem~\ref{th:errorInt_genNew2} provides an abstract bound for the error
	of the variational time discretization method. However, the order of convergence still
	is not clear. Since $\Ifapp$ is an Hermite-type interpolator of polynomial ansatz order
	larger than or equal to $r$ its approximation order is at least $r+1$. It remains
	to prove suitable bounds on the error of the approximation operator $\JJ$.
	
	\begin{definition}[Approximation orders of $\II$ and $\If_n$]
	\label{def:approxOrders}
		Let $\rExII$, $\rExIf$, $\rIf$, and $\rIfIIi{i} \in \NN_0 \cup \{-1,\infty\}$
		denote the largest numbers such that
		\begin{align*}
			\int_{I_n} \varphi(t) \, \d t & = \IIn{\varphi}
				&& \forall \varphi \in P_{\rExII}(I_n), & \qquad \qquad
			\int_{I_n} \varphi(t) \,\d t & = \int_{I_n} \If_n \varphi(t) \, \d t
				&& \forall \varphi \in P_{\rExIf}(I_n),
		\end{align*}
		\begin{align*}
			\varphi &= \If_n \varphi
				&& \forall \varphi \in P_{\rIf}(I_n), & \qquad \qquad
			\IIn{\varphi \psi_i} &= \IIn{(\If_n\varphi) \psi_i}
				&& \forall \varphi \in P_{\rIfIIi{i}}(I_n), \psi_i \in P_{i}(I_n).
		\end{align*}
		Here, $P_{-1}(I_n)$ is interpreted as $\{0\}$, in which case the respective operator 
		does not provide the corresponding approximation property. For convenience, set 
		$\rIfII := \rIfIIi{r-k}$.
		Note that $\rExIf \geq \rIfInti{i} \geq \rIf$
		and $\rIfIIi{i} \geq \rIf$ hold by definition.
	\end{definition}

	Using standard techniques the above quantities can be connected with certain
	approximation estimates. For example, let $\check{r} \in \NN_0$ then
	together with Assumption~\ref{assSupII} we find that for arbitrary
	$\varphi \in C^{\max\{\kII,\min\{\check{r},\rExII+1\}\}}(\overline{I}_n,\RR^d)$
	\begin{equation}
		\label{ieq:IntErrorIIn}
		\left\| \int_{I_n} \varphi(t) \,\d t - \IIn{\varphi} \right\|
			\leq C \frac{\tau_n}{2} \sum_{j=\min\{\check{r},\rExII+1\}}^{\max\{\kII,\min\{\check{r},\rExII+1\}\}}
				\left(\tfrac{\tau_n}{2}\right)^j \sup_{t \in I_n} \big\| \varphi^{(j)}(t)\big\|.
	\end{equation}
	Furthermore, together with Assumption~\ref{assSup} it follows for arbitrary
	$\varphi \in C^{\max\{\kIf,\min\{\check{r},\rExIf+1\}\}}(\overline{I}_n,\RR^d)$
	\begin{equation}
		\label{ieq:IntErrorIfn}
		\left\| \int_{I_n} \varphi(t) - \If_n \varphi(t) \,\d t \right\|
			\leq C \frac{\tau_n}{2} \sum_{j=\min\{\check{r},\rExIf+1\}}^{\max\{\kIf,\min\{\check{r},\rExIf+1\}\}}
				\left(\tfrac{\tau_n}{2}\right)^j \sup_{t \in I_n} \big\| \varphi^{(j)}(t)\big\|.
	\end{equation}
	
	\begin{lemma}
		\label{le:errorJJn}
		Let $r,k \in \ZZ$, $0 \leq k \leq r$, and suppose that Assumptions~\ref{assIIhat},
		\ref{assSupII}, and~\ref{assSup} hold. Furthermore, let $l, \check{r} \in \NN_0$
		and define
		\begin{gather*}
			j_{\mathrm{min},\check{r}} := \min\{\check{r}, r+1, \rIfII+2 \},
			\qquad
			j_{\mathrm{max},\check{r}} := \max\{\kJJ + 1, l, j_{\mathrm{min},\check{r}}\}.
		\end{gather*}
		If $v \in C^{j_{\mathrm{max},\check{r}}}(\overline{I}_n, \RR^d)$ then
		the error estimate
		\begin{gather}
		\label{ieq:estErrorJJn}
			\sup_{t\in I_n} \big\|\big(v - \JJn v\big)^{(l)}(t)\big\|
				\leq C \sum_{j=j_{\mathrm{min},\check{r}}}^{j_{\mathrm{max},\check{r}}}
					\left(\tfrac{\tau_n}{2}\right)^{j-l} \sup_{t\in I_n} \big\|v^{(j)}(t)\big\|
				=: \KAT^{\check{r},l}_{\eqref{ieq:estErrorJJn}}\big[v\big]
		\end{gather}
		holds with a constant $C$ independent of $\tau_n$.
	\end{lemma}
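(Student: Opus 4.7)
My plan is to combine three ingredients: a sharpened polynomial-preservation statement for $\JJn$, a standard Taylor-type polynomial approximation of $v$, and the stability bound of Lemma~\ref{le:stabJJn}.

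First I would show that $\JJn$ reproduces every $v \in P_{s_0}(I_n,\RR^d)$ with $s_0 := \min\{r, \rIfII+1\}$, which is sharper than what Lemma~\ref{le:JJhatWellDef} yields from $\rIf$ alone. The Hermite-type endpoint conditions~\eqref{def:JJn_A}--\eqref{def:JJn_E} are trivially satisfied by $v$ itself, and the variational equation~\eqref{def:JJn_Var} reduces, because $v' \in P_{s_0-1} \subset P_{\rIfII}$, to $\IIn{(v',\varphi)} = \IIn{(\If_n v',\varphi)}$ for every $\varphi \in P_{r-k}(I_n,\RR^d)$; this is precisely the defining property of $\rIfII = \rIfIIi{r-k}$. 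Uniqueness of $\JJn$ (Lemma~\ref{le:approxJJn}) then forces $\JJn v = v$. Since $j_{\mathrm{min},\check{r}}-1 \leq s_0$ directly from the definitions, $\JJn$ preserves $P_{j_{\mathrm{min},\check{r}}-1}(I_n,\RR^d)$.

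Next I would pick a polynomial $\pi_n v \in P_{j_{\mathrm{min},\check{r}}-1}(I_n,\RR^d)$, for instance the Taylor polynomial of $v$ at $t_{n-1}$ (or an averaged Taylor polynomial to relax smoothness), satisfying the classical bounds
\begin{equation*}
\sup_{t\in I_n} \big\| (v-\pi_n v)^{(j)}(t) \big\| \leq C\, \tau_n^{j_{\mathrm{min},\check{r}}-j}\, \sup_{t\in I_n} \big\| v^{(j_{\mathrm{min},\check{r}})}(t) \big\|, \qquad 0 \leq j \leq j_{\mathrm{min},\check{r}},
\end{equation*}
together with $(v-\pi_n v)^{(j)} = v^{(j)}$ for $j \geq j_{\mathrm{min},\check{r}}$. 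By polynomial preservation, $\JJn \pi_n v = \pi_n v$, and therefore $(v - \JJn v)^{(l)} = (v - \pi_n v)^{(l)} - (\JJn(v-\pi_n v))^{(l)}$. I would bound these two terms separately: the first by the Taylor estimate above, the second by Lemma~\ref{le:stabJJn} applied to $v-\pi_n v$, which yields a sum over $j=0,\dots,\kJJ+1$ of $(\tau_n/2)^{j-l}$ times $\sup_{I_n}\|(v-\pi_n v)^{(j)}\|$, into which the Taylor bound is plugged termwise.

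A short calculation then collapses the summands with $j < j_{\mathrm{min},\check{r}}$ into a single term of order $(\tau_n/2)^{j_{\mathrm{min},\check{r}}-l}\sup\|v^{(j_{\mathrm{min},\check{r}})}\|$, while the summands with $j \geq j_{\mathrm{min},\check{r}}$ appear as the desired $(\tau_n/2)^{j-l}\sup\|v^{(j)}\|$. The upper limit $j_{\mathrm{max},\check{r}} = \max\{\kJJ+1,l,j_{\mathrm{min},\check{r}}\}$ arises precisely to accommodate the stability range up to $\kJJ+1$ together with the direct $l$-th Taylor term. The only real difficulty is organisational: one has to handle the degenerate cases $j_{\mathrm{min},\check{r}}=0$ and $\check{r} > r+1$, and separately the case $l > r$, where $(\JJn v)^{(l)}\equiv 0$ and the inequality collapses to the $j=l$ term of the claimed sum. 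The true sharpening of the lemma compared to naive estimates lies entirely in the first step, which is where the abstract quantity $\rIfII$ enters.
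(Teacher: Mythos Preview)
Your proposal is correct and matches the paper's approach exactly: the paper's proof is a two-sentence sketch invoking precisely the polynomial preservation up to degree $\min\{r,\rIfII+1\}$ together with the stability bound of Lemma~\ref{le:stabJJn}, and you have supplied the details of that argument (including the key observation that $\rIfII$, not merely $\rIf$, governs the preservation order through~\eqref{def:JJn_Var}).
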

	\begin{proof}
		The error estimate follows from standard approximation theory since $\JJn$ preserves
		under the given assumptions polynomials up to degree $\min\{r,\rIfII+1\}$.
		Here also note the stability estimate for $\JJn$, cf.~Lemma~\ref{le:stabJJn},
		which motivates the upper summation bound $j_{\mathrm{max},\check{r}}$.
	\end{proof}
	
	As we shall see below, compared to the pointwise estimate of Lemma~\ref{le:errorJJn},
	the estimate for the approximation error of $\JJ$ in the mesh points $t_n^-$ can
	be even improved in some cases. However, to this end we need some further knowledge
	on the approximation property connected with the quantity $\rIfIIi{i}$.
	Besides, the respective result presented in the following lemma
	will be also used later in the superconvergence analysis.
	
	\begin{lemma}
		\label{le:auxLemma}
		Let $r,k \in \ZZ$, $0 \leq k \leq r$. Suppose that the Assumptions~\ref{assSupII}
		and~\ref{assSup} hold. Moreover, assume that $\psi_i \in P_i(I_n,\RR)$, $i \in \NN_0$,
		satisfies $\sup_{t \in I_n} |\psi_i^{(j)}(t)| \leq C \tau_n^{-j}$ for all $j\in\NN_0$.
		Let $\check{r} \in \NN_0$ and define
		\begin{gather*}
			j_{\mathrm{min},i,\check{r}}^* := \min\{\check{r},\rIfIIi{i}+1\},
			\qquad
			j_{\mathrm{max},i,\check{r}}^* := \max\{\kII,\kIf,j_{\mathrm{min},i,\check{r}}^*\}.
		\end{gather*}
		Then, we have for $\varphi \in C^{j_{\mathrm{max},i,\check{r}}^*}(\overline{I}_n,\RR^d)$ the bound
		\begin{align*}
			\left\| \IIn{\big(\varphi - \If_n \varphi \big)(t) \psi_i(t) } \right\|
			& \leq C \frac{\tau_n}{2} 
				\sum_{j=j_{\mathrm{min},i,\check{r}}^*}^{j_{\mathrm{max},i,\check{r}}^*}
					\left(\tfrac{\tau_n}{2}\right)^{j} \sup_{t\in I_n} \big\|\varphi^{(j)}(t)\big\|
		\end{align*}
		with a constant $C$ independent of $\tau_n$.
	\end{lemma}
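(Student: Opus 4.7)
The plan is to exploit the defining property of $\rIfIIi{i}$ (Definition~\ref{def:approxOrders}), namely $\IIn{(P-\If_n P)\psi_i} = 0$ for every $P \in P_{\rIfIIi{i}}(I_n,\RR^d)$, in order to replace $\varphi$ by its difference from a suitable polynomial. More precisely, I would pick $P \in P_{j^*_{\min,i,\check{r}}-1}(I_n,\RR^d) \subseteq P_{\rIfIIi{i}}(I_n,\RR^d)$ to be a Taylor-type (or averaged-Taylor) polynomial approximant of $\varphi$ of degree $j^*_{\min,i,\check{r}}-1$, so that the standard Bramble--Hilbert/Taylor remainder estimates yield, for $0 \leq m \leq j^*_{\max,i,\check{r}}$,
\begin{gather*}
  \sup_{t \in I_n} \big\|(\varphi - P)^{(m)}(t)\big\|
  \leq C \sum_{j=\max\{m,\,j^*_{\min,i,\check{r}}\}}^{j^*_{\max,i,\check{r}}}
        \left(\tfrac{\tau_n}{2}\right)^{j-m} \sup_{t\in I_n}\big\|\varphi^{(j)}(t)\big\|.
\end{gather*}

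Next I would write $\IIn{(\varphi-\If_n\varphi)\psi_i} = \IIn{((\varphi-P) - \If_n(\varphi-P))\psi_i}$ using the polynomial-preservation identity, and then apply Assumption~\ref{assSupII} to bound the integrator by a weighted sum of supremum norms of $\kII$ derivatives of the integrand. Expanding those derivatives by Leibniz' rule gives terms of the form $((\varphi-P) - \If_n(\varphi-P))^{(l)} \cdot \psi_i^{(j-l)}$, which I bound using, respectively, Assumption~\ref{assSup} on $\If_n(\varphi-P)$ (producing additional negative powers of $\tau_n/2$ compensated by higher derivatives of $\varphi-P$), and the hypothesis $\sup_{t\in I_n}|\psi_i^{(j-l)}(t)| \leq C\tau_n^{-(j-l)}$. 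This is mechanically the same type of bookkeeping that appeared after equation~\eqref{def:CSigma} in the proof of Theorem~\ref{th:solvableInt_genNew}, so the combinatorial constant can be absorbed into $C$.

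Inserting the Taylor remainder bound for $(\varphi-P)^{(m)}$ then collapses the nested sums: all indices $m < j^*_{\min,i,\check{r}}$ contribute via the Taylor estimate and the resulting power of $\tau_n/2$ is at least $j^*_{\min,i,\check{r}}$, while indices $m \geq j^*_{\min,i,\check{r}}$ already carry $(\tau_n/2)^m$ directly with $\|\varphi^{(m)}\|$. Together with the outer factor $\tau_n/2$ from Assumption~\ref{assSupII}, this produces precisely the sum $\frac{\tau_n}{2}\sum_{j=j^*_{\min,i,\check{r}}}^{j^*_{\max,i,\check{r}}} (\tau_n/2)^j \sup\|\varphi^{(j)}\|$ claimed in the statement, with the upper limit $j^*_{\max,i,\check{r}} = \max\{\kII,\kIf,j^*_{\min,i,\check{r}}\}$ arising as the worst case of summing through Leibniz and Assumption~\ref{assSup}.

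The main obstacle will be the careful bookkeeping of powers of $\tau_n/2$ through the combination of Assumption~\ref{assSup} (which can raise derivative order up to $\max\{\kIf,l\}$) and Leibniz' rule against $\psi_i$; one must verify that no term ends up with a net negative power of $\tau_n$ and that all surviving contributions fall in the range $[j^*_{\min,i,\check{r}},\,j^*_{\max,i,\check{r}}]$. Once this is in place, the estimate follows just as in the analogous arguments in Theorem~\ref{th:solvableInt_genNew}, and the independence of the constant $C$ from $\tau_n$ is inherited from Assumptions~\ref{assSupII}, \ref{assSup} and the scaling hypothesis on $\psi_i$.
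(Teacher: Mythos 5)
Your proposal is correct and follows essentially the same route as the paper's proof: the paper also subtracts a Taylor polynomial $\widetilde{\varphi}$ of degree $\min\{\check{r}-1,\rIfIIi{i}\}$ (which equals your $j^*_{\mathrm{min},i,\check{r}}-1$), uses the defining property of $\rIfIIi{i}$ to kill the term $\IIn{(\widetilde{\varphi}-\If_n\widetilde{\varphi})\psi_i}$, and then bounds the remaining terms via Assumption~\ref{assSupII}, Leibniz' rule, the scaling of $\psi_i$, Assumption~\ref{assSup}, and the Taylor remainder estimate, exactly as you outline.
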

	\begin{proof}
		Let $\widetilde{\varphi} \in P_{\min\{\check{r}-1,\rIfIIi{i}\}}(I_n,\RR^d)$
		be arbitrarily chosen. We start rewriting the left-hand side of the
		wanted inequality as follows
		\begin{equation*}
			\IIn{\big(\varphi - \If_n \varphi \big)(t) \psi_i(t) }
			= \IIn{\big(\varphi - \widetilde{\varphi}\big)(t) \psi_i(t) }
				+ \IIn{\big(\widetilde{\varphi} - \If_n \widetilde{\varphi} \big)(t) \psi_i(t) }
				+ \IIn{\If_n\big(\widetilde{\varphi} - \varphi \big)(t) \psi_i(t) }\!.
		\end{equation*}
		Since the second summand on the right-hand side vanishes by
		definition of $\rIfIIi{i}$, we find
		\begin{gather*}
			\left\| \IIn{\big(\varphi - \If_n \varphi \big)(t) \psi_i(t) } \right\|
			\leq \left\| \IIn{\big(\varphi - \widetilde{\varphi}\big)(t) \psi_i(t) } \right\|
				+ \left\| \IIn{\If_n\big(\varphi - \widetilde{\varphi} \big)(t) \psi_i(t) } \right\|\!.
		\end{gather*}
		Exploiting Assumption~\ref{assSupII}, the Leibniz rule for the $j$th derivative,
		and the given bound for $\psi_i^{(j)}$, we gain
		\begin{align*}
			& \left\| \IIn{\big(\varphi - \If_n \varphi \big)(t) \psi_i(t) } \right\| \\
			& \quad \leq \CII \frac{\tau_n}{2} \sum_{j=0}^{\kII} \left(\tfrac{\tau_n}{2}\right)^j
					\left(\sup_{t \in I_n} \big\| \big((\varphi - \widetilde{\varphi})\psi_i\big)^{(j)}(t)\big\|
						+ \sup_{t \in I_n} \big\| \big(\If_n (\varphi - \widetilde{\varphi} ) \psi_i\big)^{(j)}(t)\big\| \right) \\
			& \quad \leq \CII \frac{\tau_n}{2} \sum_{j=0}^{\kII} \left(\tfrac{\tau_n}{2}\right)^j
				\sum_{l=0}^{j} \tbinom{j}{l}
				\left(\sup_{t \in I_n} \big\| \big(\varphi - \widetilde{\varphi}\big)^{(l)}(t) \big\|
					+ \sup_{t \in I_n} \big\| \big(\If_n(\varphi - \widetilde{\varphi})\big)^{(l)}(t) \big\| \right)
				\sup_{t \in I_n} \big\| \psi_i^{(j-l)}(t) \big\| \\
			& \quad \leq C \frac{\tau_n}{2} \sum_{j=0}^{\kII}
				\sum_{l=0}^{j} \tbinom{j}{l} \left(\tfrac{\tau_n}{2}\right)^l
				\left(\sup_{t \in I_n} \big\| \big(\varphi - \widetilde{\varphi}\big)^{(l)}(t) \big\|
					+ \sup_{t \in I_n} \big\| \big(\If_n(\varphi - \widetilde{\varphi})\big)^{(l)}(t) \big\| \right) \\
			& \quad \leq C \frac{\tau_n}{2} \sum_{l=0}^{\kII}
				\left(\tfrac{\tau_n}{2}\right)^l \left(\sup_{t \in I_n} \big\| \big(\varphi
				- \widetilde{\varphi}\big)^{(l)}(t) \big\|
					+ \sup_{t \in I_n} \big\| \big(\If_n(\varphi - \widetilde{\varphi})\big)^{(l)}(t) \big\| \right)\!.
		\end{align*}
		Further, using Assumption~\ref{assSup} which yields
		\begin{gather*}
			\sup_{t \in I_n} \big\| \big(\If_n(\varphi - \widetilde{\varphi})\big)^{(l)}(t) \big\|
				\leq C \sum_{j=0}^{\max\{\kIf,l\}} \left(\tfrac{\tau_n}{2}\right)^{j-l}
					\sup_{t \in I_n} \big\| \big(\varphi - \widetilde{\varphi}\big)^{(j)}(t)\big\|,
		\end{gather*}
		we conclude that
		\begin{multline*}
			\left\| \IIn{\big(\varphi - \If_n \varphi \big)(t) \psi_i(t) } \right\| \\
                        \begin{aligned}
			& \quad \leq C \frac{\tau_n}{2} \sum_{l=0}^{\kII}
				\left(\left(\tfrac{\tau_n}{2}\right)^l \sup_{t \in I_n} \big\| \big(\varphi
				- \widetilde{\varphi}\big)^{(l)}(t) \big\|
					+ C \sum_{j=0}^{\max\{\kIf,l\}} \left(\tfrac{\tau_n}{2}\right)^{j}
					\sup_{t \in I_n} \big\| \big(\varphi - \widetilde{\varphi}\big)^{(j)}(t)\big\| \right) \\
			& \quad \leq C \frac{\tau_n}{2} \sum_{j=0}^{\max\{\kII,\kIf\}} \left(\tfrac{\tau_n}{2}\right)^{j}
					\sup_{t \in I_n} \big\| \big(\varphi - \widetilde{\varphi}\big)^{(j)}(t)\big\|
                        \end{aligned}
		\end{multline*}
		for all $\widetilde{\varphi} \in P_{\min\{\check{r}-1,\rIfIIi{i}\}}(I_n,\RR^d)$.
		
		Choosing $\widetilde{\varphi}$ as the Taylor polynomial
		of $\varphi$ at $(t_{n-1}+t_n)/2$ of degree $\min\{\check{r}-1,\rIfIIi{i}\}$,
		we have that
		\begin{gather*}
			\sup_{t \in I_n} \big\| \big(\varphi - \widetilde{\varphi}\big)^{(j)}(t)\big\|
				\leq C \left(\tfrac{\tau_n}{2}\right)^{\max\{j,\min\{\check{r},\rIfIIi{i}+1\}\}-j}
					\sup_{t \in I_n} \big\| \varphi^{(\max\{j,\min\{\check{r},\rIfIIi{i}+1\}\})}(t)\big\|.
		\end{gather*}
		So, overall it follows
		\begin{multline*}
			\left\| \IIn{\big(\varphi - \If_n \varphi \big)(t) \psi_i(t) } \right\| \\
                        \begin{aligned}
			& \quad \leq C \frac{\tau_n}{2} \sum_{j=0}^{\max\{\kII,\kIf\}}
					\left(\tfrac{\tau_n}{2}\right)^{\max\{j,\min\{\check{r},\rIfIIi{i}+1\}\}}
					\sup_{t \in I_n} \big\| \varphi^{(\max\{j,\min\{\check{r},\rIfIIi{i}+1\}\})}(t)\big\|\\
			& \quad \leq C \frac{\tau_n}{2} \sum_{j=\min\{\check{r},\rIfIIi{i}+1\}}^{\max\{\kII,\kIf,\min\{\check{r},\rIfIIi{i}+1\}\}} 
					\left(\tfrac{\tau_n}{2}\right)^{j} \sup_{t \in I_n} \big\| \varphi^{(j)}(t) \big\|
                        \end{aligned}
		\end{multline*}
		which completes the proof.
	\end{proof}
	
	Now, we can state and prove the improved estimate for the approximation error of $\JJ$ in the mesh points $t_n^-$.
	
	\begin{lemma}
		\label{le:errorJJnPoint}
		Let $r,k \in \ZZ$, $0 \leq k \leq r$. Suppose that the Assumptions~\ref{assIIhat},
		\ref{assSupII}, and~\ref{assSup} hold.
		Moreover, assume that $\max\{\rExII,\rIfII+1\} \geq r-1$.
		Let $\check{r} \in \NN_0$ and define
		\begin{gather*}
			j_{\mathrm{min},\check{r}}^{\diamond}
				:= \min\{\check{r},\,\max\{\rExII+1,\min\{r,\rIfII+1\}\}+1,\,\rIfIIi{0}+2\},
			\qquad 
			j_{\mathrm{max},\check{r}}^{\diamond}
				:= \max\{\kJJ+1, j_{\mathrm{min},\check{r}}^{\diamond}\}.
		\end{gather*}
		Then, provided $v \in C^{j_{\mathrm{max},\check{r}}^{\diamond}}(\overline{I}_n,\RR^d)$, the estimate
		\begin{gather}
		\label{ieq:estErrorJJnPoint}
			\big\|\big(v - \JJn v\big)(t_{n}^-)\big\|
			\leq C \sum_{j=j_{\mathrm{min},\check{r}}^{\diamond}}^{j_{\mathrm{max},\check{r}}^{\diamond}}
					\left(\tfrac{\tau_n}{2}\right)^{j} \sup_{t\in I_n} \big\|v^{(j)}(t)\big\|
			=: \KAT_{\eqref{ieq:estErrorJJnPoint}}\big[v\big]
		\end{gather}
		holds for $1 \leq n \leq N$ where the constant $C$ is independent of $\tau_n$.
	\end{lemma}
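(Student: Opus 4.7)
Setting $e := v - \JJn v$, the core idea is to exploit the variational condition~\eqref{def:JJn_Var} with the specific test function $\varphi \equiv 1 \in P_{r-k}(I_n,\RR^d)$ in order to replace $\int_{I_n} e'$ by an expression involving $v'-\If_n v'$. Testing with $\varphi \equiv 1$ yields $\IIn{e'(t)} + \delta_{0,k}\, e(t_{n-1}^+) = \IIn{(v'-\If_n v')(t)}$. Combined with the fundamental theorem of calculus $e(t_n^-) = e(t_{n-1}^+) + \int_{I_n} e'(t)\,\d t$ and the interpolation condition~\eqref{def:JJn_A} (which forces $e(t_{n-1}^+) = 0$ for $k \geq 1$), a short case distinction on $k = 0$ vs.\ $k \geq 1$ shows that in both cases the boundary contribution cancels and one is left with the clean identity
\begin{equation*}
    e(t_n^-) = \bigg(\int_{I_n} e'(t)\,\d t - \IIn{e'(t)}\bigg) + \IIn{(v'-\If_n v')(t)\cdot 1}.
\end{equation*}

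To bound the first piece I would apply~\eqref{ieq:IntErrorIIn} to $\varphi = e'$ and then use Lemma~\ref{le:errorJJn} (with a sufficiently large internal parameter) at order $l = m+1$ to control $\sup_{t\in I_n}\|e^{(m+1)}(t)\|$ for each index $m$ in the quadrature-error window. The key combinatorial observation is that the product of powers of $\tau_n/2$ collapses: the factor $\tau_n \cdot (\tau_n/2)^m \cdot (\tau_n/2)^{j-(m+1)}$ telescopes to $2(\tau_n/2)^{j}$, independent of $m$. The outer $m$-sum therefore only contributes a constant factor, and the effective minimum exponent of the resulting bound becomes $\max\{\rExII+2,\,\min\{r+1,\rIfII+2\}\} = \max\{\rExII+1,\,\min\{r,\rIfII+1\}\}+1$, matching the middle entry of $j_{\mathrm{min},\check{r}}^{\diamond}$. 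For the second piece I would apply Lemma~\ref{le:auxLemma} with $i = 0$, $\psi_0 \equiv 1$, and $\varphi = v'$, which directly produces a bound whose minimum exponent is $\min\{\check{r}+1,\,\rIfIIi{0}+2\}$, contributing the $\rIfIIi{0}+2$ entry.

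Putting both bounds together yields an estimate of $\|e(t_n^-)\|$ whose minimum exponent is the minimum of $\check{r}$, $\max\{\rExII+1,\,\min\{r,\rIfII+1\}\}+1$, and $\rIfIIi{0}+2$, which is exactly $j_{\mathrm{min},\check{r}}^{\diamond}$; the upper summation bound $j_{\mathrm{max},\check{r}}^{\diamond}$ reflects the stability limit of $\JJn$ provided by Lemma~\ref{le:stabJJn}. The hypothesis $\max\{\rExII,\rIfII+1\} \geq r-1$ guarantees $\max\{\rExII+1,\,\min\{r,\rIfII+1\}\}+1 \geq r$, ensuring that the combined quadrature/$\JJn$ contribution does not undercut the natural stability range. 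The main obstacle I anticipate is the careful bookkeeping of the nested double sum when substituting Lemma~\ref{le:errorJJn} into~\eqref{ieq:IntErrorIIn}: one must verify that only non-negative powers of $\tau_n/2$ survive the collapse, which requires choosing the internal parameter in Lemma~\ref{le:errorJJn} large enough to make $j_{\min}$ there equal to $\min\{r+1,\rIfII+2\}$ while still respecting the outer range dictated by $\rExII$ and $\kII$, and then invoking the assumption on $\max\{\rExII,\rIfII+1\}$ to align the two windows consistently.
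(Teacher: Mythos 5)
Your identity $e(t_n^-) = \big(\int_{I_n} e'\,\d t - \IIn{e'}\big) + \IIn{v'-\If_n v'}$ is correct and is (after regrouping) exactly the paper's decomposition, and your treatment of the second piece via Lemma~\ref{le:auxLemma} with $i=0$, $\psi_0\equiv 1$ matches the paper. The gap is in the first piece. If you apply~\eqref{ieq:IntErrorIIn} to $\varphi=e'$ and then bound $\sup_{t\in I_n}\|e^{(m+1)}(t)\|$ by Lemma~\ref{le:errorJJn} with $l=m+1$, the telescoping you describe, $\tfrac{\tau_n}{2}\cdot(\tau_n/2)^{m}\cdot(\tau_n/2)^{j-(m+1)}=(\tau_n/2)^{j}$, shows that the resulting minimum exponent is just $j_{\mathrm{min},\check{r}''}=\min\{\check{r}'',\,r+1,\,\rIfII+2\}$ from Lemma~\ref{le:errorJJn} --- the quantity $\rExII$ drops out entirely, because the small factor $(\tau_n/2)^{m}$ with $m\ge\rExII+1$ coming from the quadrature error is exactly eaten by the inverse powers $(\tau_n/2)^{j-(m+1)}$ in the derivative estimate (those terms with $j<m+1$ are genuinely present in Lemma~\ref{le:errorJJn}; they do not vanish). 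So your claimed effective exponent $\max\{\rExII+2,\min\{r+1,\rIfII+2\}\}$ is not what this computation delivers, and whenever $\rExII+2>\min\{r+1,\rIfII+2\}$ (e.g.\ $\IIhat$ a high-order Gauss rule, as in several of the paper's test cases) your argument yields a strictly weaker rate than the lemma asserts.

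The missing idea is to split $\int_{I_n}e'\,\d t-\IIn{e'}$ further into $\big(\int_{I_n}v'\,\d t-\IIn{v'}\big)-\big(\int_{I_n}(\JJn v)'\,\d t-\IIn{(\JJn v)'}\big)$. The first part is the quadrature error of the \emph{smooth} function $v'$ and genuinely carries the order $\min\{\check{r},\rExII+2\}$ via~\eqref{ieq:IntErrorIIn}. The second part concerns the \emph{polynomial} $(\JJn v)'\in P_{r-1}(I_n,\RR^d)$, and this is where the hypothesis $\max\{\rExII,\rIfII+1\}\ge r-1$ does its real work: if $\rExII\ge r-1$ this term vanishes identically (exact integration of polynomials of degree $\le r-1$), while if $\rIfII+1\ge r-1$ one combines~\eqref{ieq:IntErrorIIn} with~\eqref{ieq:estErrorJJn} to control it at the right order. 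In your write-up the hypothesis is only invoked for a bookkeeping remark about the stability range, which is not where it is needed. (A minor further point: in the second piece you should run Lemma~\ref{le:auxLemma} with internal parameter $\check{r}-1$ rather than $\check{r}$, so that the upper summation index stays within $j_{\mathrm{max},\check{r}}^{\diamond}$ and no extra regularity of $v$ is required.)
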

	\begin{proof}
		We start rewriting the term to be estimated. From  the fundamental theorem of
		calculus and exploiting the definition~\eqref{def:JJn} of $\JJn$ we gain
		\begin{align*}
			& \big(v - \JJn v\big)(t_{n}^-)
				= \big(v - \JJn v\big)(t_{n-1}^+) + \int_{I_n} \big(v - \JJn v\big)'(t) \, \d t \\
			& \quad = \underbrace{\delta_{0,k} \big(v - \JJn v\big)(t_{n-1}^+) + \IIn{\If_n v' - (\JJn v)'}}_{=0}
				- \IIn{\If_n v' - (\JJn v)'} \\[-1em]
			& \hspace{28em} + \int_{I_n} \big(v - \JJn v\big)'(t) \, \d t \\
			& \quad = \left(\int_{I_n} v'(t)\,\d t - \IIn{ v' }\right)
				+ \IIn{v'-\If_n v'}
				+ \left(\IIn{(\JJn v)'} - \int_{I_n} (\JJn v)'(t) \,\d t\right).
		\end{align*}
		The first difference on the right-hand side can be estimated by~\eqref{ieq:IntErrorIIn}.
		We obtain
		\begin{gather*}
			\left\| \int_{I_n} v'(t)\,\d t - \IIn{ v' } \right\|
				\leq C \frac{\tau_n}{2} 
					\sum_{j=\min\{\check{r}-1,\rExII+1\}}^{\max\{\kII,\min\{\check{r}-1,\rExII+1\}\}}
					\left(\tfrac{\tau_n}{2}\right)^j \sup_{t \in I_n} \big\| v^{(j+1)}(t)\big\|.
		\end{gather*}
		In order to bound the second difference, we apply Lemma~\ref{le:auxLemma} with
		$i = 0$, $\psi_0 \equiv 1$. It follows
		\begin{align*}
			\left\| \IIn{v' - \If_n v' } \right\|
			& \leq C \frac{\tau_n}{2} 
				\sum_{j=\min\{\check{r}-1,\rIfIIi{0}+1\}}^{\max\{\kII,\kIf,\min\{\check{r}-1,\rIfIIi{0}+1\}\}}
					\left(\tfrac{\tau_n}{2}\right)^{j} \sup_{t\in I_n} \big\|v^{(j+1)}(t)\big\|.
		\end{align*}
		Finally, if $\rExII \geq r-1$ the third difference vanishes since then
		$(\JJn v)' \in P_{r-1}(I_n,\RR^d)$ is integrated exactly. If $\rIfII +1 \geq r-1$,
		i.e., $\JJn$ preserves polynomials up to degree $r-1$, we obtain from
		combining~\eqref{ieq:IntErrorIIn} and~\eqref{ieq:estErrorJJn} that
		\begin{align*}
			& \left\| \int_{I_n} (\JJn v)'(t) \,\d t - \IIn{(\JJn v)'}\right\| \\
			& \quad \leq C \frac{\tau_n}{2} \sum_{j=\min\{\check{r}-1,\rExII+1\}}^{\min\{r-1,\max\{\kII, \min\{\check{r}-1,\rExII+1\}\}\}}
					\left(\tfrac{\tau_n}{2}\right)^j
					\sup_{t \in I_n} \big\| (\JJn v)^{(j+1)}(t)\big\| \\
			& \quad \leq C \sum_{j=\min\{\check{r},\rExII+2\}}^{\min\{r,\max\{\kII+1, \min\{\check{r},\rExII+2\}\}\}}
					\left(\tfrac{\tau_n}{2}\right)^{j}
					\Big(\sup_{t \in I_n} \big\| v^{(j)}(t)\big\|
						+ \underbrace{\sup_{t \in I_n} \big\| (v-\JJn v)^{(j)}(t)\big\|}_{\leq \KAT^{j,j}_{\eqref{ieq:estErrorJJn}}[v]}\Big)\\[-1ex]
			& \quad \leq C \sum_{j=\min\{\check{r},\rExII+2\}}^{\min\{r,\max\{\kII+1, \min\{\check{r},\rExII+2\}\}\}}
					\Bigg(\sum_{l=j}^{\max\{\kJJ + 1, j\}}
						\left(\tfrac{\tau_n}{2}\right)^{l} \sup_{t\in I_n} \big\|v^{(l)}(t)\big\| \Bigg)\\
			& \quad \leq C \sum_{l=\min\{\check{r},\rExII+2\}}^{
						\max\{\kJJ + 1, \min\{r,\check{r},\rExII+2\}\}}
						\left(\tfrac{\tau_n}{2}\right)^{l} \sup_{t\in I_n} \big\|v^{(l)}(t)\big\|.
		\end{align*}
		So, overall we have shown that
		\begin{gather*}
			\big\|\big(v - \JJn v\big)(t_{n}^-)\big\|
			\leq C \sum_{j=\min\left\{\check{r},\, \rExII+2,\,\rIfIIi{0}+2\right\}}^{
				\max\left\{\kJJ + 1,\, \min\left\{\check{r},\,\rExII+2,\,\rIfIIi{0}+2\right\}\right\}}
					\left(\tfrac{\tau_n}{2}\right)^{j} \sup_{t\in I_n} \big\|v^{(j)}(t)\big\|.
		\end{gather*}
		Therefore, recalling~\eqref{ieq:estErrorJJn} which in some cases may
		provide a better estimate, we conclude the wanted statement. Here also note
		that always $\rIfIIi{0} \geq \rIfIIi{r-k} = \rIfII \geq \min\{r-1,\rIfII\}$.
	\end{proof}
	
	Finally, summarizing the above results, we now want to list the proven convergence orders.
	
	\begin{corollary}
	\label{Cor:ErrEst}
	Let $r,k \in \ZZ$, $0\leq k \leq r$, and $l \in \{0,1\}$. Suppose that
	Assumptions~\ref{assIIhat}, \ref{assSupII}, \ref{assSup}, and~\ref{assDiff}
	hold. Moreover, let Assumption~\ref{assPoint_a} or~\ref{assPoint_b} be
	satisfied. Denoting by $u$ and $U$ the solutions of~\eqref{initValueProb}
	and~\eqref{eq:locProbGen}, respectively, we have for $1 \leq n \leq N$
	\begin{gather}
		\label{Eq:ErrEst1}
		\sup_{t \in I_n} \big\| (u-U)^{(l)}(t) \big\|
		\leq C \tau^{\min\{r,\rIfII+1\}},
	\end{gather}
	with $\rIfII$ as defined in Definition~\ref{def:approxOrders}.
	If we furthermore suppose that $\max\{\rExII,\rIfII+1\} \geq r-1$,
	then we even have
	\begin{gather}
		\label{Eq:ErrEst2}
		\sup_{t \in I_n} \big\| (u-U)(t) \big\|
		\leq C \tau^{\min\{r+1,\,\rIfII+2,
		\,\rIfIIi{0}+1,
		\,\max\{\rExII+1, \min\{r,\rIfII+1\}\}
		\}}
	\end{gather}
	as improved $L^\infty$ estimate.
	\end{corollary}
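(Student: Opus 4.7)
The strategy is to combine the abstract bound from Theorem~\ref{th:errorInt_genNew2} with the explicit convergence rates for the approximation operators $\Ifapp$ and $\JJ$ that are provided by Lemmas~\ref{le:errorJJn} and~\ref{le:errorJJnPoint}. Theorem~\ref{th:errorInt_genNew2} reduces the task to controlling three quantities uniformly in $\nu$: the Hermite interpolation error $\sup_{t\in I_\nu}\|(\Id-\Ifapp_\nu)u(t)\|$, the pointwise errors $\sup_{t\in I_\nu}\|(u-\JJ_\nu u)^{(j)}(t)\|$ for $j\in\{0,1\}$, and the scaled nodewise error $\tau_\nu^{-1}\|(u-\JJ_\nu u)(t_\nu^-)\|$. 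For each ingredient I would plug in the sharpest available bound, and then take the minimum over the contributing exponents.

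For the basic estimate \eqref{Eq:ErrEst1}, I first note that $\Ifapp_\nu$ is a Hermite-type interpolator of ansatz order at least $r$, so standard interpolation theory gives $\sup_{t\in I_\nu}\|(\Id-\Ifapp_\nu)u(t)\|\leq C\tau^{r+1}$. Next, applying Lemma~\ref{le:errorJJn} with $\check r$ taken sufficiently large (any $\check r\ge\kJJ+1$ suffices) yields
\[
\sup_{t\in I_\nu}\big\|(u-\JJ_\nu u)^{(j)}(t)\big\|\leq C\,\tau^{\min\{r+1,\,\rIfII+2\}-j}\qquad(j=0,1).
\]
Without the additional assumption on $\rExII$ and $\rIfII$, the only bound available for the nodewise term is the one obtained by evaluating Lemma~\ref{le:errorJJn} at $t=t_\nu^-$; after multiplication by $\tau_\nu^{-1}$ this yields $\tau_\nu^{-1}\|(u-\JJ_\nu u)(t_\nu^-)\|\leq C\,\tau^{\min\{r,\rIfII+1\}}$. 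Combining the three contributions, the minimum is $\tau^{\min\{r,\rIfII+1\}}$, which matches \eqref{Eq:ErrEst1} for $l=0$; for $l=1$ the derivative bound already equals $\tau^{\min\{r+1,\rIfII+2\}-1}=\tau^{\min\{r,\rIfII+1\}}$, so the same rate is attained.

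For the improved estimate \eqref{Eq:ErrEst2}, the hypothesis $\max\{\rExII,\rIfII+1\}\ge r-1$ is precisely the one required by Lemma~\ref{le:errorJJnPoint}. Replacing the crude nodewise bound by the sharper one of Lemma~\ref{le:errorJJnPoint}, again with $\check r$ taken sufficiently large, gives
\[
\tau_\nu^{-1}\big\|(u-\JJ_\nu u)(t_\nu^-)\big\|\leq C\,\tau^{\min\{\max\{\rExII+1,\,\min\{r,\rIfII+1\}\},\,\rIfIIi{0}+1\}}.
\]
Combining this with the $\tau^{r+1}$ bound from $(\Id-\Ifapp_\nu)u$ and the $\tau^{\min\{r+1,\rIfII+2\}}$ bound from $(u-\JJ_\nu u)$ and taking the minimum produces exactly the exponent displayed in \eqref{Eq:ErrEst2}.

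The only non-routine aspect is bookkeeping: carefully tracking the many $\min$/$\max$ expressions and verifying that the combined exponent agrees with the stated one. I do not expect any fundamental obstacle, since Theorem~\ref{th:errorInt_genNew2} has already absorbed the constants that depend exponentially on $T$ and $\Cdiff$, and in each of the three ingredient bounds the exponent is an explicit function of $r$, $k$, $\rIfII$, $\rIfIIi{0}$ and $\rExII$. The subtlest point is recognizing that the scaled nodewise term $\tau_\nu^{-1}\|(u-\JJ_\nu u)(t_\nu^-)\|$ is the one that dictates the basic rate $\min\{r,\rIfII+1\}$ in \eqref{Eq:ErrEst1} and is precisely the term that Lemma~\ref{le:errorJJnPoint} improves to yield \eqref{Eq:ErrEst2}.
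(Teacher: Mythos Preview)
Your proposal is correct and follows essentially the same approach as the paper, which presents the corollary merely as a summary of the preceding results without a separate proof. You have identified precisely the three ingredients the paper intends you to combine---Theorem~\ref{th:errorInt_genNew2}, the Hermite interpolation order of $\Ifapp$ (stated just before the corollary), Lemma~\ref{le:errorJJn}, and Lemma~\ref{le:errorJJnPoint}---and your bookkeeping of the resulting exponents is accurate.
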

	
	If $\max\{\rExII,\rIfII+1\} \geq r-1$ is satisfied, we obtain formally
	\begin{gather*}
		\sup_{t \in I_n} \big\| (u-U)'(t) \big\|
		\leq C \tau^{\min\{r,\,\rIfII+1,\,
		\rIfIIi{0}+1,\,
		\max\{\rExII+1,\min\{r,\rIfII+1\}\}
		\}}
	\end{gather*}
	for the $W^{1,\infty}$ seminorm. However, this gives the same convergence
	order as~\eqref{Eq:ErrEst1}.
	
	Since the quantity $\rIfII = \rIfIIi{r-k}$ used in the lemmas and the corollary
	above is quite abstract, we want to provide some lower bound for $\rIfIIi{i}$
	based on the more familiar quantities $\rIf$, $\rExIf$, and $\rExII$.
	However, for this result, we need to formulate some further assumptions.
	
	\begin{assumption}
		\label{assErrorStar}
		The operator $\If_n$ is a projection onto the space of polynomials of maximal degree $\rIf < \infty$,
		i.e., $\If_n : C^{\kIf}(\overline{I}_n) \to P_{\rIf}(\overline{I}_n)$ and
		$\If_n \varphi = \varphi$ for all $\varphi \in P_{\rIf}(\overline{I}_n)$,
		or $\If_n$ is the identity characterized by setting $\rIf = \infty$.
	\end{assumption}
	
	\begin{assumption}
		\label{assPreserveIfn}
		Given $\ell\in\NN_0$, the identity
		\begin{gather}
			\label{eq:PreserveIfn}
			\int_{I_n} \If_n \big((\varphi-\If_n \varphi) \psi\big)(t)\,\d t = 0
			\qquad \forall \psi \in P_{\ell}(I_n)
		\end{gather}
		holds for all $\varphi \in C^{k_{\If}}(\overline{I}_n)$.
		Note that Assumption~\ref{assErrorStar} implies~\eqref{eq:PreserveIfn} for $\ell=0$.
		If $\If_n$ is either the identity or an Hermite-type interpolation operator then
		condition~\eqref{eq:PreserveIfn} holds for any $\ell\in\NN_0$.
	\end{assumption}
	
	We now give the lower bounds for $\rIfIIi{i}$.
	
	\begin{lemma}
		Let $r,k \in \ZZ$, $0 \leq k \leq r$, and $i \in \NN_0$. Then, it always holds $\rIfIIi{i} \geq \rIf$.
		Supposing that Assumption~\ref{assErrorStar} is fulfilled, we even get
		\begin{gather*}
			\rIfIIi{i} \geq \max\{\rIf,\min\{\rExII-i,\rIfInti{i}\}\}.
		\end{gather*}
		If $\If_n$ additionally satisfies Assumption~\ref{assPreserveIfn} (with parameter
		$\ell=i$), we simply have
		\begin{gather*}
			\rIfIIi{i} \geq \max\{\rIf,\min\{\rExII,\rExIf\}-i\}
		\end{gather*}
		since then $\rIfInti{i} \geq \max\{\rIf,\rExIf-i\}$.
		
		Furthermore, under the weaker assumption that $\If = \If^1 \circ \ldots \circ \If^l$
		is a composition of several operators $\If^j$, $1 \leq j \leq l$, that all by
		themselves satisfy the Assumptions~\ref{assErrorStar} and~\ref{assPreserveIfn}
		(with parameter $\ell=i$), we still find
		\begin{gather*}
			\rIfInti{i} \geq \min_{j\in \KAM_i \cup \{l\}}\big\{\max\{r_{\If^j},r_{\mathrm{ex}}^{\If^j}-i\}\big\}
		\end{gather*}
		where $\KAM_i := \big\{ j \in \NN \,:\, 1 \leq j \leq l-1, \,
			\max\{r_{\If^j},r_{\mathrm{ex}}^{\If^j}-i\} < \min_{j+1\leq m \leq l}\{r_{\If^m}\}\big\}$.
	\end{lemma}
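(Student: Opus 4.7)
My plan is to verify the four claimed bounds in sequence, reducing each to an exactness statement by combining the definitions in Definition~\ref{def:approxOrders} with the projection property of Assumption~\ref{assErrorStar}. The universal bound $\rIfIIi{i}\ge \rIf$ is immediate: for $\varphi \in P_{\rIf}(I_n)$ the definition of $\rIf$ gives $\If_n \varphi = \varphi$, so $\IIn{\varphi\psi_i} = \IIn{(\If_n\varphi)\psi_i}$ for every $\psi_i \in P_i(I_n)$.

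For the second bound I take $\varphi \in P_{\min\{\rExII - i,\rIfInti{i}\}}(I_n)$ and study $\IIn{(\varphi - \If_n\varphi)\psi_i}$. If $\rIf \geq \min\{\rExII - i,\rIfInti{i}\}$ the trivial bound already covers the case; otherwise $\rIf + i < \rExII$, so by Assumption~\ref{assErrorStar} both $\varphi\psi_i$ and $(\If_n\varphi)\psi_i$ lie in $P_{\rExII}(I_n)$ and are integrated exactly by $\II$. Hence $\IIn{(\varphi - \If_n\varphi)\psi_i} = \int_{I_n}(\varphi - \If_n\varphi)\psi_i\,\d t$, which vanishes by the definition of $\rIfInti{i}$.

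For the improved bound on $\rIfInti{i}$ under Assumption~\ref{assPreserveIfn} with $\ell = i$, I fix $\varphi \in P_{\max\{\rIf,\rExIf - i\}}(I_n)$ and split
\begin{gather*}
	\int_{I_n}(\varphi - \If_n\varphi)\psi_i\,\d t = \int_{I_n}\If_n\bigl((\varphi - \If_n\varphi)\psi_i\bigr)\,\d t + \int_{I_n}(\Id - \If_n)\bigl((\varphi - \If_n\varphi)\psi_i\bigr)\,\d t.
\end{gather*}
Assumption~\ref{assPreserveIfn} makes the first summand vanish. In the non-trivial subcase $\rExIf - i > \rIf$, the product $(\varphi - \If_n\varphi)\psi_i$ has degree at most $\max\{\rExIf - i,\rIf\} + i = \rExIf$, and the defining property of $\rExIf$ kills the second summand as well.

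For the composition $\If = \If^1 \circ \dots \circ \If^l$ I use the telescoping
\begin{gather*}
	\varphi - \If\varphi = \sum_{j=1}^{l}(\Id - \If^j)(\Psi_{j+1}\varphi), \qquad \Psi_{j+1} := \If^{j+1}\circ\dots\circ\If^l,\quad \Psi_{l+1} := \Id,
\end{gather*}
and integrate against $\psi_i$. An induction on the projection property of each $\If^m$ shows $\Psi_{j+1}\varphi \in P_{\min_{j+1\le m\le l}\{r_{\If^m}\}}(I_n)$ for $j < l$, while $\Psi_{l+1}\varphi = \varphi$. For $j < l$ with $j \notin \KAM_i$ the very definition of $\KAM_i$ together with the already established third bound applied to $\If^j$ yields $\Psi_{j+1}\varphi \in P_{r_{\mathrm{int},i}^{\If^j}}(I_n)$, so the $j$-th summand integrates to zero without any constraint on $\deg\varphi$. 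For $j \in \KAM_i \cup \{l\}$ the same argument requires $\deg\varphi \leq \max\{r_{\If^j},r_{\mathrm{ex}}^{\If^j}-i\}$, and taking the minimum over these critical indices produces the stated bound. The main bookkeeping hurdle lies here — the iterated projection argument giving $\Psi_{j+1}\varphi \in P_{\min_{j+1\le m\le l}\{r_{\If^m}\}}$ and the case split through $\KAM_i$ — whereas the three earlier bounds amount to routine telescoping.
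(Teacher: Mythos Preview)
Your proof is correct and follows essentially the same route as the paper: the same telescoping decomposition, the same use of Assumption~\ref{assErrorStar} to control degrees, and the same splitting via Assumption~\ref{assPreserveIfn} to kill each summand. The only organizational difference is that you prove the single-operator bound $\rIfInti{i} \geq \max\{\rIf,\rExIf-i\}$ separately and then invoke it modularly for each $\If^j$ in the composition case, whereas the paper embeds that argument inline in the composition proof (so the single-operator statement is recovered as the special case $l=1$); this is a matter of presentation, not substance.
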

	\begin{proof}
		Since by definition $\varphi = \If_n \varphi$ for $\varphi \in P_{\rIf}(I_n)$,
		it also holds $\IIn{\varphi \psi}=\IIn{(\If_n \varphi) \psi}$ for
		$\varphi \in P_{\rIf}(I_n)$ and sufficiently smooth $\psi$.
		This always implies $\rIfIIi{i} \geq \rIf$.
		
		Now, assume that $\min\{\rExII-i,\rIfInti{i}\} > \rIf$ and let
		$\varphi \in P_{\min\{\rExII-i,\rIfInti{i}\}}(I_n)$. Then Assumption~\ref{assErrorStar}
		yields that $\If_n \varphi \in P_{\rIf}(I_n) \subset P_{\min\{\rExII-i,\rIfInti{i}\}}(I_n)$.
		Therefore, since $\II$ is exact for polynomials up to degree $\rExII$ 
		and recalling the definition of $\rIfInti{i}$, it follows
		\begin{gather*}
			\IIn{(\varphi-\If_n \varphi)\psi_i}
				= \int_{I_n} \big(\varphi - \If_n \varphi \big)\psi_i \,\d t = 0
			\qquad \text{for all $\psi_i \in P_{i}(I_n)$.}
		\end{gather*}
		Hence, $\rIfIIi{i} \geq \min\{\rExII-i,\rIfInti{i}\}$, if $\min\{\rExII-i,\rIfInti{i}\} > \rIf$.
		So, altogether under Assumption~\ref{assErrorStar} we have shown that
		$\rIfIIi{i} \geq \max\{\rIf,\min\{\rExII-i,\rIfInti{i}\}\}$.
		
		Finally, we assume that $\If = \If^1 \circ \ldots \circ \If^l$ where each
		$\If^j$ satisfies the Assumptions~\ref{assErrorStar} and~\ref{assPreserveIfn}.
		Let $\varphi \in P_{\check{r}}(I_n)$ with $\check{r} \geq 0$ to be specified
		later and $\psi_i \in P_i(I_n)$, then
		\begin{align*}
			& \int_{I_n} \big(\varphi - \If_n^1 \circ \ldots \circ \If_n^l \varphi \big) \psi_i \,\d t \\
			& \quad = \int_{I_n} \big((\Id - \If_n^l ) \varphi \big) \psi_i \,\d t
				+ \sum_{j=1}^{l-1} \int_{I_n} \big((\Id - \If_n^j)(\If_n^{j+1} \circ \ldots \circ \If_n^l \varphi)\big) \psi_i \,\d t \\
			& \quad = \int_{I_n} \big((\Id - \If_n^l ) \varphi \big) \psi_i \,\d t
				+ \sum_{j \in \KAM_\infty} \int_{I_n} \big((\Id - \If_n^j)(\If_n^{j+1} \circ \ldots \circ \If_n^l \varphi)\big) \psi_i \,\d t \\
			& \quad = \int_{I_n} (\Id-\If_n^l)\big(\big((\Id - \If_n^l ) \varphi \big) \psi_i\big) \,\d t
				+ \sum_{j \in \KAM_\infty}
					\int_{I_n} (\Id - \If_n^j)\big(\big((\Id - \If_n^j)(\If_n^{j+1} \circ \ldots \circ \If_n^l \varphi)\big) \psi_i\big) \,\d t \\
			& \quad \qquad + \int_{I_n} \If_n^l \big(\big((\Id - \If_n^l ) \varphi \big) \psi_i\big) \,\d t
				+ \sum_{j \in \KAM_\infty}
					\int_{I_n} \If_n^j \big(\big((\Id - \If_n^j)(\If_n^{j+1} \circ \ldots \circ \If_n^l \varphi)\big) \psi_i\big) \,\d t.
		\end{align*}
		Because of~\eqref{eq:PreserveIfn}, both terms in the last line vanish.
		Here note that actually~\eqref{eq:PreserveIfn} is only needed for $\If_n^j$
		with $j \in \KAM_\infty \cup \{l\}$ and
		$\varphi \in P_{\min\{\check{r},\min\{r_{\If^m}\,:\,j+1\leq m \leq l\}\}}(I_n)$.
		Moreover, since
		\begin{gather*}
			\big((\Id - \If_n^j)(\If_n^{j+1} \circ \ldots \circ \If_n^l \varphi)\big) \psi_i
				\in P_{\min\{\check{r},\min\{r_{\If^m}\,:\,j+1\leq m \leq l\}\}+i}(I_n)
		\end{gather*}
		also those summands of the penultimate line with
		$\max\{r_{\If^j},r_{\mathrm{ex}}^{\If^j}-i\} \geq \min_{j+1\leq m \leq l}\{r_{\If^m}\}$
		vanish. Hence, we obtain
		\begin{align*}
			& \int_{I_n} \big(\varphi - \If_n^1 \circ \ldots \circ \If_n^l \varphi \big) \psi_i \,\d t \\
			& \quad = \int_{I_n} (\Id-\If_n^l)\big(\big((\Id - \If_n^l ) \varphi \big) \psi_i\big) \,\d t
				+ \sum_{j \in \KAM_i}
					\int_{I_n} (\Id - \If_n^j)\big(\big((\Id - \If_n^j)(\If_n^{j+1} \circ \ldots \circ \If_n^l \varphi)\big) \psi_i\big) \,\d t.
		\end{align*}
		From this identity, we easily find that
		$\rIfInti{i} \geq \min_{j\in \KAM_i \cup \{l\}}\big\{\max\{r_{\If^j},r_{\mathrm{ex}}^{\If^j}-i\}\big\}$.
		Note that here Assumption~\ref{assErrorStar} is exploited to guarantee
		that for a polynomial $\varphi$ the degree of $\If_n^j \varphi$ is never
		greater than that of $\varphi$.
	\end{proof}

	\section{Superconvergence analysis}
	\label{sec:superconvergence}
	
	In order to prove superconvergence in time mesh points, we shall exploit a
	special representation of the discrete problem~\eqref{eq:locProbGen}.
	However, to this end we will define the approximation operator
	$\PPhat : C^{\kJJ}([-1,1]) \to P_{r-1}([-1,1])$ by
	\begin{subequations}
	\label{def:PPhat}
	\begin{align}
		\big(\PPhat \hat{v}\big)^{(i)}(-1^+)
			& = \hat{v}^{(i)}(-1^+),
			&& \text{if } k \geq 3, \, i = 0,\ldots,\left\lfloor \tfrac{k-1}{2} \right\rfloor - 1, \\
		\big(\PPhat \hat{v}\big)^{(i)}(+1^-)
			& = \hat{v}^{(i)}(+1^-),
			&& \text{if } k \geq 2, \,i = 0,\ldots,\left\lfloor \tfrac{k}{2} \right\rfloor -1, \\
		\IIhat\Big[\big(\PPhat \hat{v}\big) \widehat{\varphi} \Big]
			& = \IIhat\Big[\big(\Ifhat \hat{v} \big) \widehat{\varphi} \Big]
			&& \forall \widehat{\varphi} \in P_{r-k}([-1,1]) \text{ with } \delta_{0,k}\widehat{\varphi}(-1^+) = 0.
	\end{align}
	\end{subequations}
	Note that $\PPhat$ is connected to $\JJhat$ in such a way that
	$\PPhat (\hat{v}') = (\JJhat \hat{v})'$ holds true.
	Analogously to the respective lemmas for $\JJhat$ (Lemmas~\ref{le:JJhatWellDef},
	\ref{le:approxJJn}, and~\ref{le:errorJJn}), we also get the following
	properties and estimates for the operator $\PPhat$.
	
	\begin{lemma}
		\label{le:PPhatWellDef}
		Let $r\in\NN$ and $k\in\ZZ$ such that $0\leq k \leq r$. Suppose that
		Assumption~\ref{assIIhat} holds. Then $\PPhat$ given by~\eqref{def:PPhat}
		is well-defined. If $\Ifhat$ preserves polynomials up to degree $\tilde{r}$
		then $\PPhat$ preserves polynomials up to degree $\min\{\tilde{r},r-1\}$.
	\end{lemma}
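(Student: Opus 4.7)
The plan is to mirror almost verbatim the proof of Lemma~\ref{le:JJhatWellDef}, replacing the role of $(\JJhat \hat{v})'$ by $\PPhat \hat{v}$ and accounting for the fact that $\PPhat$ maps into $P_{r-1}([-1,1])$ rather than $P_r([-1,1])$. The argument has three ingredients: a dimension count, a uniqueness check for the homogeneous problem via Assumption~\ref{assIIhat}, and a direct verification for polynomial preservation using that uniqueness.

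First I would verify that the number of linear conditions in~\eqref{def:PPhat} matches $\dim P_{r-1}([-1,1]) = r$. The left-endpoint conditions contribute $\lfloor (k-1)/2 \rfloor$ equations (taken as zero when $k < 3$), the right-endpoint conditions contribute $\lfloor k/2 \rfloor$ equations (zero when $k < 2$), and the variational condition contributes $r - k + 1$ equations for $k \geq 1$ but only $r$ equations for $k = 0$ since the constraint $\widehat{\varphi}(-1^+) = 0$ cuts the test space $P_r([-1,1])$ down to an $r$-dimensional subspace. A short case-by-case calculation then gives the total $r$ in every case.

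Next I would take $\hat{v} \equiv 0$ and show $\PPhat \hat{v} \equiv 0$, splitting into the same three cases as in Lemma~\ref{le:JJhatWellDef}. For $k = 0$ I would parametrise admissible test functions as $\widehat{\varphi} = (1+\hat{t}) \widetilde{\varphi}$ with $\widetilde{\varphi} \in P_{r-1}([-1,1])$; the homogeneous variational condition then reads $\IIhat[(\PPhat \hat{v})(1+\hat{t}) \widetilde{\varphi}] = 0$ for all such $\widetilde{\varphi}$, which triggers Assumption~\ref{assIIhat} with $k=0$ and yields $\PPhat \hat{v} \equiv 0$ directly (note that the absolute-value convention on $|\lfloor (k-1)/2 \rfloor|$ is precisely what produces the needed $(1+\hat{t})$ factor here). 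For $k = 1$ there are no point conditions and the variational condition $\IIhat[(\PPhat \hat{v}) \widehat{\varphi}] = 0$ on $\widehat{\varphi} \in P_{r-1}([-1,1])$ matches Assumption~\ref{assIIhat} directly. For $k \geq 2$ the endpoint conditions factor $\PPhat \hat{v} = (1-\hat{t})^{\lfloor k/2 \rfloor}(1+\hat{t})^{\lfloor (k-1)/2 \rfloor} \widehat{\psi}$ with $\widehat{\psi} \in P_{r-k}([-1,1])$, and inserting this factorisation into the variational condition again reduces to the hypothesis of Assumption~\ref{assIIhat}, forcing $\widehat{\psi} \equiv 0$.

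For the polynomial preservation statement I would argue as follows: if $\hat{v} \in P_{\min\{\tilde{r}, r-1\}}([-1,1])$ then $\Ifhat \hat{v} = \hat{v}$ by assumption, so $\hat{v}$ itself lies in $P_{r-1}([-1,1])$ and trivially satisfies all point conditions as well as the variational identity $\IIhat[\hat{v}\, \widehat{\varphi}] = \IIhat[(\Ifhat \hat{v})\, \widehat{\varphi}]$; by the uniqueness just established it follows that $\PPhat \hat{v} = \hat{v}$. I do not foresee any genuinely hard step: the whole argument is book-keeping parallel to Lemma~\ref{le:JJhatWellDef}, and the only point demanding slight care is the $k = 0$ case, where the additional constraint $\widehat{\varphi}(-1^+) = 0$ on the test space replaces the jump contribution that was previously carried by the $\delta_{0,k}$ terms in the definition of $\JJhat$.
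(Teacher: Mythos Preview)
Your proposal is correct and matches the paper's approach: the paper omits the proof of Lemma~\ref{le:PPhatWellDef} and merely states that it follows analogously to Lemma~\ref{le:JJhatWellDef}, which is exactly the argument you have spelled out. Your dimension count, the case split (including the $k=0$ case handled via the factorisation $\widehat{\varphi}=(1+\hat t)\widetilde{\varphi}$ and the absolute-value convention in Assumption~\ref{assIIhat}), and the polynomial-preservation verification via uniqueness are all correct.
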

	
	\begin{lemma}[Approximation operator]
		\label{le:approxPPn}
		Let $r\in\NN$ and $k\in\ZZ$ such that $0\leq k \leq r$. Moreover, suppose
		that Assumption~\ref{assIIhat} holds. Then the operator
		$\PPn:C^{\kJJ}(\overline{I}_n,\RR^d) \to P_{r-1}(I_n,\mathbb{R}^d)$
		given by
		\begin{subequations}
		\label{eq:apprPP}
		\begin{align}
			(\PPn v)^{(i)}(t_{n-1}^+)
				& = v^{(i)}(t_{n-1}^+),
				&& \text{if } k \geq 3,\, i=0,\ldots,\left\lfloor \tfrac{k-1}{2} \right\rfloor -1,
			\label{eq:apprPP_A} \\
			(\PPn v)^{(i)}(t_{n}^-)
				& = v^{(i)}(t_{n}^-),
				&& \text{if } k \geq 2,\, i=0,\ldots,\left\lfloor \tfrac{k}{2} \right\rfloor-1,
			\label{eq:apprPP_E} \\
			\IIn{ \big(\PPn v(t), \varphi(t)\big)}
				& = \IIn{ \big(\If_n v(t), \varphi(t)\big)}
				&& \forall \varphi \in P_{r-k}(I_n,\RR^d) \text{ with } \delta_{0,k} \varphi(t_{n-1}^+) = 0,
			\label{eq:apprPP_Var}
		\end{align}
		\end{subequations}
		is well-defined.
		
		Furthermore, let Assumptions~\ref{assSupII} and~\ref{assSup} hold.
		For $l, \check{r} \in \NN_0$, we define
		\begin{gather*}
			j_{\mathrm{min},\check{r}}^{\bullet}
				:= \min\{\check{r},r,\rIfII+1\},
			\qquad
			j_{\mathrm{max},l,\check{r}}^{\bullet}
				:= \max\{\kJJ, l, j_{\mathrm{min},\check{r}}^{\bullet}\}.
		\end{gather*}
		Then, we have for $v \in C^{j_{\mathrm{max},l,\check{r}}^{\bullet}}(\overline{I}_n,\RR^d)$
		the error estimates
		\begin{gather}
			\label{ieq:estErrorPPn}
			\sup_{t\in I_n} \big\|\big(v - \PPn v\big)^{(l)}(t)\big\|
				\leq C \sum_{j=j_{\mathrm{min},\check{r}}^{\bullet}}^{j_{\mathrm{max},l,\check{r}}^{\bullet}}
					\left(\tfrac{\tau_n}{2}\right)^{j-l} \sup_{t\in I_n} \big\|v^{(j)}(t)\big\|
				=: \KAT_{\eqref{ieq:estErrorPPn}}^{\check{r},l}\big[v\big]
		\end{gather}
		with a constant $C$ independent of $\tau_n$.
	\end{lemma}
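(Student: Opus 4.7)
My plan is to mirror the development for $\JJn$ in Lemmas~\ref{le:JJhatWellDef}, \ref{le:approxJJn}, \ref{le:stabJJn}, and~\ref{le:errorJJn}. The statement has two parts: well-definedness of $\PPn$ and the error bound~\eqref{ieq:estErrorPPn}.

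For well-definedness, I would observe that under the affine change of variable $T_n$ from~\eqref{def:timeTransfNew} the local conditions~\eqref{eq:apprPP_A}--\eqref{eq:apprPP_Var} are term-by-term equivalent to the reference conditions~\eqref{def:PPhat}: the $\tau_n/2$-factor introduced by the transformation of $\IIn{\cdot}$ cancels uniformly because it multiplies both sides of the variational equation, and the local test-function space is mapped bijectively onto the reference test-function space. Hence the claim follows directly from Lemma~\ref{le:PPhatWellDef}.

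For the error estimate I would proceed in three steps. First, I would establish a stability bound
\[
\sup_{t\in I_n} \|(\PPn v)^{(l)}(t)\| \leq C \sum_{j=0}^{\kJJ} \left(\tfrac{\tau_n}{2}\right)^{j-l} \sup_{t\in I_n} \|v^{(j)}(t)\|
\]
along the lines of Lemma~\ref{le:stabJJn}: an inverse inequality reduces to the case $l=0$; Lemma~\ref{le:normdefNew} already provides a norm on $P_{r-1}([-1,1],\RR^d)$ whose three groups of defining quantities are precisely those that determine $\PPhat \hat v$, so equivalence of norms on finite-dimensional spaces controls $\sup_{\hat t}\|\PPhat\hat v\|$, and Assumptions~\ref{assSupII} and~\ref{assSup} transfer the bound back to $I_n$. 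Second, I would verify that $\PPn$ preserves polynomials up to degree $\min\{r-1,\rIfII\}$: for such $v$ the pointwise conditions~\eqref{eq:apprPP_A} and~\eqref{eq:apprPP_E} are trivially satisfied by $\PPn v = v$, and the variational equation~\eqref{eq:apprPP_Var} reduces to $\IIn{(v-\If_n v)\varphi} = 0$ for all admissible $\varphi$, which holds by the very definition of $\rIfII = \rIfIIi{r-k}$ in Definition~\ref{def:approxOrders} (for $k=0$ the admissible test space is only a subspace of $P_{r-k}=P_r$, so the $\rIfIIi{r}$-property still suffices). Third, a standard Bramble--Hilbert/Taylor-remainder argument closes the proof: letting $p$ be the Taylor polynomial of $v$ at $(t_{n-1}+t_n)/2$ of degree $\min\{r-1,\rIfII,\check{r}-1\}$ gives $\PPn p = p$, whence
\[
\sup_{t\in I_n}\|(v-\PPn v)^{(l)}(t)\| \leq \sup_{t\in I_n}\|(v-p)^{(l)}(t)\| + \sup_{t\in I_n}\|(\PPn(v-p))^{(l)}(t)\|;
\]
bounding the first term by the Taylor remainder produces the lower index $j_{\mathrm{min},\check{r}}^{\bullet}$ of~\eqref{ieq:estErrorPPn}, and applying the stability bound of Step~1 to $v-p$ together with further Taylor remainders produces the upper index $j_{\mathrm{max},l,\check{r}}^{\bullet}$.

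The principal obstacle I anticipate is the bookkeeping of the various index thresholds: reconciling the preservation degree $\min\{r-1,\rIfII\}$ with the smoothness parameter $\check{r}$ and the stability ceiling $\kJJ$, all of which interact inside $j_{\mathrm{min},\check{r}}^{\bullet}$ and $j_{\mathrm{max},l,\check{r}}^{\bullet}$. A secondary subtlety is the $k=0$ case, where the variational test space is the codimension-one subspace of $P_{r-k}$ that vanishes at $t_{n-1}^+$; this has to be tracked both in the norm used for stability (where it is the source of the lower summation index $\delta_{0,k}$ in Lemma~\ref{le:normdefNew}) and in the polynomial-preservation step.
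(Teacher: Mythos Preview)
Your proposal is correct and follows precisely the route the paper indicates: the paper does not give an explicit proof but states that the result is obtained ``analogously to the respective lemmas for $\JJhat$ (Lemmas~\ref{le:JJhatWellDef}, \ref{le:approxJJn}, and~\ref{le:errorJJn})'', and your three-step outline (well-definedness via Lemma~\ref{le:PPhatWellDef} and transformation, stability via Lemma~\ref{le:normdefNew} and norm equivalence, polynomial preservation up to degree $\min\{r-1,\rIfII\}$, then a Taylor/Bramble--Hilbert argument) is exactly that analogy spelled out. Your handling of the $k=0$ test-space constraint and of the index bookkeeping is also in line with the paper's treatment of the $\JJn$ case.
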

	
	In order to cover also the setting $r=k=0$, we agree that in this case
	the operators $\PPhat$ and $\PPn$ always return the zero function.
	
	\bigskip
	
	Inspecting the definition of the discrete solution $U$ of problem~\eqref{eq:locProbGen}
	and~\eqref{eq:apprPP}, we find that
	\begin{gather}
		\label{eq:derUisApproxOfF}
		U'\big|_{I_n} = \PPn f(\cdot,U(\cdot)).
	\end{gather}
	Therefore, since for $k \geq 1$ furthermore $U(t_{n-1}^+) = U_{n-1}^-$ with $U_{n-1}^-$ given, we also
	see that the discretization method fits into the unified framework of~\cite{AMN11} for $k \geq 1$.
	
	\begin{theorem}[Superconvergence estimate]
		\label{th:superconvergence}
		Let $r,k \in \ZZ$, $0 \leq k \leq r$. Suppose that the Assumptions~\ref{assIIhat},
		\ref{assSupII}, and~\ref{assSup} hold. Moreover, denote by $u$ and $U$
		the solutions of~\eqref{initValueProb} and~\eqref{eq:locProbGen}, respectively.
		Suppose that (for $\tau$ sufficiently small) the global error $\sup_{t \in I} \big\|(u-U)(t)\big\|$,
		as well as $U$ and all of its derivatives, can be bounded independent of the mesh parameter.
		Then we have
		\begin{equation}
			\label{Eq:ErrSupTheo}
			\bignorm{(u-U)(t_N^-)}
			\leq C(f,u) \left(\sup_{t \in I} \big\|(u-U)(t)\big\|^2
				+ \tau^{\min\{2r-k+1,\rIfIIVar+1,\max\{\rExII+1,\min\{r,\rIfII+1\}\}\}} \right)\!
		\end{equation}
		where $\rIfIIVar := \min_{0 \leq i \leq r-k}\{\rIfIIi{i}+i\}$.
	\end{theorem}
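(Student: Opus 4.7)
The plan is a duality argument exploiting the identity $U'|_{I_n} = \PPn f(\cdot, U(\cdot))$ from~\eqref{eq:derUisApproxOfF}, the Galerkin orthogonality built into the definition of $\PPn$, and the approximation results of Lemmas~\ref{le:auxLemma} and~\ref{le:approxPPn}.

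\emph{Dual problem and error representation.} Set $A(t) := \partial_u f(t, u(t))$ and let $z : [t_0, t_N] \to \RR^d$ solve the backward adjoint ODE
\begin{equation*}
-z'(t) = A(t)^T z(t), \qquad z(t_N^-) = (u-U)(t_N^-).
\end{equation*}
Under the standing smoothness and boundedness hypotheses, $z$ is as smooth as $f$ and $u$, with derivatives bounded uniformly in $\tau$. Using the Taylor expansion $f(\cdot,u) - f(\cdot,U) = A(\cdot)(u-U) + R$ with $\|R\| \le C\|u-U\|^2$, combined with the ODE for $u$ and the representation~\eqref{eq:derUisApproxOfF}, piecewise integration by parts on each $I_n$ (together with telescoping, the initial condition $(u-U)(t_0) = 0$, and continuity of $z$) produces
\begin{equation*}
\|(u-U)(t_N^-)\|^2 = \sum_{n=1}^N \int_{I_n}\!\bigl( f(\cdot, U) - \PPn f(\cdot, U),\, z \bigr)\,\d t \;+\; \mathcal{R}_{\mathrm{jump}} \;+\; \mathcal{R}_{\mathrm{nonlin}},
\end{equation*}
where $\mathcal{R}_{\mathrm{nonlin}}$ is controlled by $CT\sup_t\|u-U\|^2$ (the source of the quadratic term in~\eqref{Eq:ErrSupTheo}), and $\mathcal{R}_{\mathrm{jump}}$ collects the $k = 0$ jump contributions $\sum_n(z(t_{n-1}), [U]_{n-1})$, to be handled via~\eqref{eq:zeta_Var}.

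\emph{Galerkin orthogonality and threefold split.} For each $n$, choose $\pi z \in P_{r-k}(I_n, \RR^d)$ as the Taylor polynomial of $z$ at the midpoint $(t_{n-1}+t_n)/2$, and expand $\pi z = \sum_{i=0}^{r-k} c_{n,i}\,\psi_i$ with $\psi_i \in P_i(I_n, \RR)$ normalised so that $\sup_{t \in I_n}|\psi_i^{(j)}(t)| \le C\tau_n^{-j}$, hence $|c_{n,i}| \le C\tau_n^i \sup|z^{(i)}|$. Then decompose
\begin{align*}
\int_{I_n}\bigl(f(\cdot,U) - \PPn f(\cdot,U),\,z\bigr)\,\d t
  &= \underbrace{\left(\int_{I_n} - \IIn{\cdot}\right)\!\bigl(f(\cdot,U) - \PPn f(\cdot,U),\,z\bigr)}_{\mathrm{(A)}} \\
  &\quad + \underbrace{\IIn{\bigl(f(\cdot,U) - \PPn f(\cdot,U),\,z - \pi z\bigr)}}_{\mathrm{(B)}}
  + \underbrace{\IIn{\bigl(f(\cdot,U) - \If_n f(\cdot,U),\,\pi z\bigr)}}_{\mathrm{(C)}},
\end{align*}
where the passage from $\PPn f$ to $\If_n f$ in $\mathrm{(C)}$ uses the Galerkin identity~\eqref{eq:apprPP_Var} that defines $\PPn$; for $k = 0$, the constant part of $\pi z$ (not admissible as test function) is peeled off and absorbed into $\mathcal{R}_{\mathrm{jump}}$.

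\emph{Estimation of the three terms.} Term $\mathrm{(B)}$ combines $|z - \pi z|_{L^\infty(I_n)} = O(\tau_n^{r-k+1})$ with the $\PPn$-error bound $\|f(\cdot,U) - \PPn f(\cdot,U)\|_{L^\infty(I_n)} = O(\tau_n^{\min\{r,\rIfII+1\}})$ from Lemma~\ref{le:approxPPn}; summation over $n$ yields the $\tau^{2r-k+1}$ contribution. Term $\mathrm{(C)}$ is handled summand-by-summand: applying Lemma~\ref{le:auxLemma} to $\IIn{(f(\cdot,U)-\If_n f(\cdot,U))\psi_i}$ gives $C\tau_n^{\rIfIIi{i}+2}$, which weighted by $|c_{n,i}| \le C\tau_n^i$ contributes $\tau_n^{i+\rIfIIi{i}+2}$; minimisation over $i$ and summation over $n$ yield $\tau^{\rIfIIVar+1}$. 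Term $\mathrm{(A)}$ is a pure quadrature error, to which the exactness degree $\rExII$ applies: after a further approximation of $z$ (or $(f(\cdot,U) - \PPn f(\cdot,U))z$) by a polynomial of suitable degree, one obtains the $\max\{\rExII+1,\min\{r,\rIfII+1\}\}$ exponent, the maximum reflecting the trade-off between what quadrature exactness can absorb and what the approximation quality of $\PPn$ contributes.

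\emph{Main obstacle.} The principal technical burden is the $k = 0$ case: the discrete solution jumps at mesh nodes, so the adjoint integration by parts produces boundary contributions that must be rewritten via the variational equation~\eqref{eq:zeta_Var} and absorbed into $\mathrm{(A)}$--$\mathrm{(C)}$ in a way compatible with the constraint $\delta_{0,k}\varphi(t_{n-1}^+)=0$ in~\eqref{eq:apprPP_Var}. A secondary difficulty lies in obtaining the sharp exponent in $\mathrm{(A)}$, where naive bounds deliver only $\rExII + 1 + \min\{r,\rIfII+1\}$; upgrading this to the $\max$ structure stated in~\eqref{Eq:ErrSupTheo} requires a Lemma~\ref{le:auxLemma}-type estimate on the dual side, together with careful bookkeeping of which of $\mathrm{(A)}$, $\mathrm{(B)}$, $\mathrm{(C)}$ dominates in each regime of $\rExII$, $\rIfII$, $\rIfIIVar$.
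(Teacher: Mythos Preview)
Your duality approach is essentially the same argument as the paper's, just globalised: the paper works interval-by-interval with the resolvent $R(t_n^-,s)$ of $y'=\partial_u f(t,u(t))\,y$, derives a one-step recursion for $e(t_n^-)$ via variation of constants, and closes with a discrete Gronwall inequality. Your global adjoint $z$ is nothing but $z(s)=R(t_N^-,s)^T e(t_N^-)$, so the two frameworks coincide after unrolling the recursion. The correspondences are exact: your $\mathrm{(B)}$ is the paper's term $\mathrm{(II)}$ (Taylor remainder of order $r-k+1$ in the dual variable times the $\PPn$-error of order $\min\{r,\rIfII+1\}$); your $\mathrm{(C)}$ is the second summand in the paper's split of $\mathrm{(I)}$, handled by Lemma~\ref{le:auxLemma}; your $\mathrm{(A)}$ is the first summand there.

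Two refinements make the execution clean, both taken from the paper. First, for $k=0$ do not expand $z$ at the midpoint but at $t_{n-1}^+$, i.e.\ use the basis $\psi_i(s)=(s-t_{n-1})^i$. Then $\psi_i(t_{n-1}^+)=0$ for $i\ge1$ and you may invoke~\eqref{eq:locProbGenVar} (not~\eqref{eq:apprPP_Var}) directly: testing with $\psi_i$ gives $\IIn{(U'-f(\cdot,U))\psi_i}+\delta_{0,i}[U]_{n-1}=\IIn{(\If_n-\Id)f(\cdot,U)\,\psi_i}$, so the jump term is absorbed exactly into the $i=0$ contribution rather than requiring a separate $\mathcal{R}_{\mathrm{jump}}$ analysis. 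With a midpoint basis the ``peel-off'' you describe is not sufficient, since the higher $\psi_i$ do not vanish at $t_{n-1}^+$ either. Second, your worry about $\mathrm{(A)}$ is unfounded: the $\max$-structure falls out automatically from Leibniz' rule. The quadrature error of $(f-\PPn f)\,z$ is controlled by $\tau_n^{\rExII+2}$ times the $(\rExII+1)$-th derivative of the product; distributing derivatives, the terms with at most $\min\{r,\rIfII+1\}$ derivatives on $f-\PPn f$ pick up the $\PPn$-approximation order from~\eqref{ieq:estErrorPPn}, while the remaining terms are bounded by constants. This yields per interval $C\tau_n\,\tau_n^{\max\{\rExII+1,\min\{r,\rIfII+1\}\}}$ without any dual-side version of Lemma~\ref{le:auxLemma}.
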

	\begin{proof}
		In order to prove the wanted statement, we firstly derive an estimate for the
		error $e(t) = (u-U)(t)$ at $t_n^-$ provided that we already have a suitable bound
		at $t_{n-1}^-$. To this end, we adapt some basic ideas known from the superconvergence proof
		for collocation methods, see e.g.~\cite[Theorem~II.1.5, p.~28]{HLW04}. So, we
		consider the local discrete solution $U\big|_{I_n}$ as solution $\widetilde{U}$ of the
		perturbed initial value problem
		\begin{gather*}
			\widetilde{U}'(t) = f(t,\widetilde{U}(t)) + \df(t)  \quad \text{in } I_n,
			\qquad \widetilde{U}(t_{n-1}) = U(t_{n-1}^+),
		\end{gather*}
		where $\df(t) := U'(t) - f(t,U(t))$ denotes the defect. Since $u$
		solves~\eqref{initValueProb}, we find for all $t \in I_n$
		\begin{align*}
			U'(t) - u'(t)
			& = f(t,U(t)) - f(t,u(t)) + \df(t) \\
			& = \int_{0}^1 \frac{\partial}{\partial v} f\big(t,u(t)+s(U(t)-u(t))\big) \big(U(t)-u(t)\big) \,\d s + \df(t) \\
			& = \frac{\partial}{\partial v} f\big(t,u(t)\big) \big(U(t)-u(t)\big) + \df(t) \\
			& \qquad + \underbrace{\int_{0}^1 \int_0^s \big(U(t)-u(t)\big)^T
				\frac{\partial^2}{\partial v^2} f\big(t,u(t)+\tilde{s}(U(t)-u(t))\big)
				\big(U(t)-u(t)\big) \,\d \tilde{s} \, \d s}_{=:\,\rf(t)}.
		\end{align*}
		Here $f$ is interpreted as function of $(t,v)$ such that $\frac{\partial}{\partial v} f$
		for example is the derivative of $f$ with respect to the second (function) argument.
		Also note that we used for the last identity that
		\begin{equation*}
			\frac{\partial}{\partial v} f\big(t,u(t)+s(U(t)-u(t))\big)
				= \frac{\partial}{\partial v} f\big(t,u(t)\big) + \int_0^s \big(U(t)-u(t)\big)^T \frac{\partial^2}{\partial v^2}
					f\big(t,u(t)+\tilde{s}(U(t)-u(t))\big) \,\d \tilde{s}.
		\end{equation*}
		Shortly, we have
		\begin{gather*}
			e'(t) = \frac{\partial}{\partial v} f\big(t,u(t)\big) e(t) - \df(t) - \rf(t)
			\qquad \forall t \in I_n.
		\end{gather*}
		Therefore, the variation of constants formula, cf. e.g.~\cite[Theorem~I.11.2, p.~66]{HNW08},
		yields
		\begin{gather*}
			e(t) = R(t,t_{n-1}^+) e(t_{n-1}^+) - \int_{t_{n-1}}^t R(t,s)\big(\df(s) + \rf(s)\big) \,\d s
			\qquad \forall t \in I_n
		\end{gather*}
		where $R(t,s)$ is the resolvent of the homogeneous differential equation
		$y'(t)= \frac{\partial}{\partial v} f\big(t,u(t)\big) y(t)$ for initial
		values given at $s$.
		
		Using that $u$ is continuous as well as $U$ for $k \geq 1$ we conclude
		\begin{equation}
			\label{eq:superDecomposition}
			e(t_n^-) = R(t_n^-,t_{n-1}^+) e(t_{n-1}^-) - \delta_{0,k} R(t_n^-,t_{n-1}^+) \big[U \big]_{n-1}
				- \int_{I_{n}} R(t_n^-,s)\big(\df(s) + \rf(s)\big) \,\d s.
		\end{equation}
		After splitting the right-hand side in an appropriate way, we shall study
		the single terms separately.
		
		First, for the term including $e(t_{n-1}^-)$ we gain
		\begin{gather*}
			R(t_n^-,t_{n-1}^+) e(t_{n-1}^-)
				= \bigg(\underbrace{R(t_{n-1}^+,t_{n-1}^+)}_{=\Id}
					+ \int_{I_n} \frac{\partial}{\partial t} R(\tilde{s},t_{n-1}^+) \,\d \tilde{s} \bigg) e(t_{n-1}^-)
		\end{gather*}
		which implies
		\begin{equation}
			\label{ieq:superInitialError}
			\big\|R(t_n^-,t_{n-1}^+) e(t_{n-1}^-)\big\|
				\leq \bigg(1+\tau_n \sup_{\tilde{s} \in I_n} \bigg\| \frac{\partial}{\partial t} R(\tilde{s},t_{n-1}^+)\bigg\| \bigg)
					\big\|e(t_{n-1}^-)\big\|
				\leq \big(1+C\tau_n\big) \big\|e(t_{n-1}^-)\big\|.
		\end{equation}
		Furthermore, the term including the remainder term $\rf(\cdot)$ can be bounded as follows
		\begin{align}
			\bigg\|\int_{I_n} R(t_n^-,s)\rf(s) \,\d s \bigg\|
			& \leq \tau_n \sup_{s \in I_n} \big\| R(t_{n}^-,s)\big\| \sup_{s \in I_n} \big\| \rf(s)\big\| \nonumber \\
			& \leq \tau_n \sup_{s \in I_n} \big\| R(t_{n}^-,s)\big\|
					\frac{1}{2} \sup_{s \in I_n}\sup_{\tilde{s} \in [0,1]}\bigg\|\frac{\partial^2}{\partial v^2} f\big(s,u(s)-\tilde{s}e(s))\big)\bigg\|
					\sup_{s \in I_n} \big\|e(s)\big\|^2 \nonumber \\
			& \leq C \tau_n \sup_{s \in I_n} \big\|e(s)\big\|^2. \label{ieq:superSquareError}
		\end{align}
		Here, for the last step we also exploited that
		$u(s)-\tilde{s}e(s)$ is in a bounded neighborhood of $u(s)$ for all $s \in I_n$,
		$\tilde{s} \in [0,1]$, since by assumption $\big\|e(s)\big\| \leq C$.
		
		Finally, the remaining terms are considered. A Taylor series expansion
		of $R(t_n^-,s)$ with respect to $s$ in $t_{n-1}^+$ gives
		\begin{gather*}
			R(t_n^-,s) = \sum_{i=0}^{r-k} \frac{(s-t_{n-1})^i}{i!}
					\frac{\partial^i}{\partial s^i} R(t_n^-,t_{n-1}^+)
				+ \int_{t_{n-1}}^s \frac{(\tilde{s}-t_{n-1})^{r-k}}{(r-k)!}
					\frac{\partial^{r-k+1}}{\partial s^{r-k+1}} R(t_n^-,\tilde{s}) \,\d \tilde{s}.
		\end{gather*}
		This formula motivates the following decomposition
		\begin{align}
			& \delta_{0,k} R(t_n^-,t_{n-1}^+) \big[U \big]_{n-1}
				+ \int_{I_{n}} R(t_n^-,s)\df(s)\,\d s \label{eq:helpSuperSplit}\\
			& \quad =  \sum_{i=0}^{r-k} \frac{1}{i!} \frac{\partial^i}{\partial s^i} R(t_n^-,t_{n-1}^+)
				\underbrace{\left(\int_{I_{n}} \big(s-t_{n-1}\big)^i
					\big(U'(s)-f(s,U(s)) \big)\,\d s + \delta_{0,k} \delta_{0,i} \big[U \big]_{n-1} \right) }_{=\mathrm{(I)}} \nonumber \\
			& \quad \qquad + \underbrace{\int_{I_n}\int_{t_{n-1}}^s \frac{(\tilde{s}-t_{n-1})^{r-k}}{(r-k)!}
				\frac{\partial^{r-k+1}}{\partial s^{r-k+1}} R(t_n^-,\tilde{s}) \,\d \tilde{s} \, \big(U'(s)-f(s,U(s))\big)\,\d s}_{=\mathrm{(II)}}. \nonumber
		\end{align}

		We start with the second term and bound $\mathrm{(II)}$ as follows
		\begin{align*}
			\big\|\mathrm{(II)}\big\|
				& \leq \int_{I_n} \int_{I_n} \frac{(\tilde{s}-t_{n-1})^{r-k}}{(r-k)!}
					\left\| \frac{\partial^{r-k+1}}{\partial s^{r-k+1}} R(t_n^-,\tilde{s}) \right\| \,\d \tilde{s}
					\, \big\|U'(s)-f(s,U(s))\big\| \,\d s \\
				& \leq \tau_n \frac{\tau_n^{r-k+1}}{(r-k+1)!}
					\sup_{\tilde{s} \in I_n} \left\| \frac{\partial^{r-k+1}}{\partial s^{r-k+1}} R(t_n^-,\tilde{s}) \right\|
					\sup_{s \in I_n} \big\|U'(s)-f(s,U(s))\big\|.
		\end{align*}
		Because of~\eqref{eq:derUisApproxOfF} the last term on the right-hand side can be
		rewritten as an approximation error. Indeed, we have
		\begin{gather*}
			\sup_{s \in I_n} \big\|U'(s)-f(s,U(s))\big\|
				= \sup_{s \in I_n} \big\|\big(\Id - \PPn\big)f(s,U(s))\big\|
				\leq \KAT_{\eqref{ieq:estErrorPPn}}^{\min\{\bar{r},r,\rIfII+1\},0}\big[f(\cdot,U(\cdot))\big]
		\end{gather*}
		where we used the estimate of~\eqref{ieq:estErrorPPn}
		with $v(\cdot) = f(\cdot,U(\cdot))$.
		This results in
		\begin{align*}
			\big\|\mathrm{(II)}\big\|
				& \leq C \frac{\tau_n}{2} \tau_n^{\min\{\bar{r},r,\rIfII+1\}+r-k+1} \\
				& \qquad \sup_{\tilde{s} \in I_n} \left\| \frac{\partial^{r-k+1}}{\partial s^{r-k+1}} R(t_n^-,\tilde{s}) \right\|
					\sum_{j=0}^{\max\{\kJJ, \min\{\bar{r},r,\rIfII+1\}\}}
						\sup_{s\in I_n} \left\| \frac{\d^j}{\d s^j} f(s,U(s)) \right\|.
		\end{align*}
		
		In order to estimate $\mathrm{(I)}$ for $0 \leq i \leq r-k$, we split the term as
		\begin{align}
			\mathrm{(I)}
				& = \int_{I_n} (s-t_{n-1})^i \big(U'(s)-f(s,U(s))\big) \,\d s
					+ \delta_{0,k} \delta_{0,i} \big[U \big]_{n-1} \nonumber \\
				& = \left(\int_{I_n} (s-t_{n-1})^i \big(U'(s) - f(s,U(s))\big)\,\d s
					- \IIn{(s-t_{n-1})^i \big(U'(s) - f(s,U(s))\big)} \right) \nonumber \\
				& \qquad + \IIn{(s-t_{n-1})^i \big(U'(s)-f(s,U(s))\big)}
					+ \delta_{0,k} \delta_{0,i} \big[U \big]_{n-1} \nonumber \\
				& = \left(\int_{I_n} (s-t_{n-1})^i \big(U'(s) - f(s,U(s))\big)\,\d s
					- \IIn{(s-t_{n-1})^i \big(U'(s) - f(s,U(s))\big)} \right) \nonumber \\
				& \qquad + \IIn{(s-t_{n-1})^i \big(\If_n - \Id\big)f(s,U(s))} \label{help:Split(i)} 
		\end{align}
		where~\eqref{eq:locProbGenVar} was used for the last identity.
		The first given difference on the right-hand side can be bounded by~\eqref{ieq:IntErrorIIn}.
		We gain, additionally applying Leibniz' rule for the $j$th derivative,
		\begin{align*}
			& \left\|\int_{I_n} (s-t_{n-1})^i \big(U'(s) - f(s,U(s))\big)\,\d s
				- \IIn{(s-t_{n-1})^i \big(U'(s) - f(s,U(s))\big)} \right\| \\
			& \quad \leq C \frac{\tau_n}{2} \sum_{j=\min\{\bar{r},\rExII+1\}}^{\max\{\kII,\min\{\bar{r},\rExII+1\}\}}
				\left(\tfrac{\tau_n}{2}\right)^j
				\sup_{s \in I_n} \left\| \frac{\d^j}{\d s^j} (s-t_{n-1})^i \big(U'(s) - f(s,U(s))\big) \right\| \\
			& \quad \leq C \frac{\tau_n}{2} \sum_{j=\min\{\bar{r},\rExII+1\}}^{\max\{\kII,\min\{\bar{r},\rExII+1\}\}}
				\!\!\left(\tfrac{\tau_n}{2}\right)^j
				\!\!\!\sum_{m=\max\{0,j-i\}}^j \!\!\!\tbinom{j}{m}
					\sup_{s \in I_n} \left\| \frac{\d^m}{\d s^m} \big(U'(s) - f(s,U(s))\big) \right\| \\
			& \hspace{25em} \sup_{s \in I_n} \big\| \tfrac{i!}{(i-j+m)!} (s-t_{n-1})^{i-j+m} \big\| \\
			& \quad \leq C \frac{\tau_n}{2} \sum_{j=\min\{\bar{r},\rExII+1\}}^{\max\{\kII,\min\{\bar{r},\rExII+1\}\}}
				\sum_{m=\max\{0,j-i\}}^j \tau_n^{m+i} \sup_{s \in I_n} \left\| \frac{\d^m}{\d s^m} \big(U'(s) - f(s,U(s))\big) \right\|\!.
		\end{align*}
		Recalling~\eqref{eq:derUisApproxOfF}, the last supremum could be further
		estimated by~\eqref{ieq:estErrorPPn} with $v(\cdot) = f(\cdot,U(\cdot))$.
		More detailed, it holds
		\begin{align*}
			& \sup_{s \in I_n} \left\| \frac{\d^m}{\d s^m} \big(U'(s) - f(s,U(s))\big) \right\| \\
			& \quad \leq \begin{cases}
					\sup_{s \in I_n} \left\| \frac{\d^m}{\d s^m} \big(\PPn - \Id \big) f(s,U(s)) \right\|
					\leq \KAT_{\eqref{ieq:estErrorPPn}}^{\check{r},m}\big[f(\cdot,U(\cdot))\big],
						& \text{if } m \leq \check{r} = \min \{\bar{r},r,\rIfII+1\}, \\
					\sup_{s \in I_n} \left(\left\|U^{(m+1)}(s)\right\|
						+ \left\| \frac{\d^m}{\d s^m} f(s,U(s)) \right\| \right),
						& \text{otherwise.}
				\end{cases}
		\end{align*}
		Note that disregarding regularity aspects the choice $\check{r} = \min \{\bar{r},r,\rIfII+1\}$
		in Lemma~\ref{le:approxPPn} is allowed.
		Therefore, we have
		\begin{align*}
			& \left\|\int_{I_n} (s-t_{n-1})^i \big(U'(s) - f(s,U(s))\big)\,\d s
				- \IIn{(s-t_{n-1})^i \big(U'(s) - f(s,U(s))\big)} \right\| \\
			& \quad \leq C \frac{\tau_n}{2} \tau_n^{\max\{\min\{\bar{r},\rExII+1\},\,\min\{\bar{r},r,\rIfII+1\}+i\}} \\
			& \quad \qquad \sum_{j=0}^{\max\{\kJJ,\,\min\{\bar{r},r,\rIfII+1\},\,\min\{\bar{r},\rExII+1\}\}}
					\hspace{-1em}\sup_{s \in I_n} \!\left( \left\| U^{(j+1)}(s)\right\|
					+ \left\| \frac{\d^j}{\d s^j} f(s,U(s)) \right\| \right)\!.
		\end{align*}
		Factoring out $\tau_n^{i}$, the second term on the right-hand side of~\eqref{help:Split(i)} can be
		analyzed by Lemma~\ref{le:auxLemma} with $\varphi(t) = f(t,U(t))$ and
		$\psi_i(t) = \big(\frac{t-t_{n-1}}{\tau_n}\big)^i$. Then we obtain
		\begin{align*}
			& \left\|\IIn{(s-t_{n-1})^i \big(\Id - \If_n\big)f(s,U(s))}\right\|
				\leq C \frac{\tau_n}{2} \tau_n^{i}
				\sum_{j=j_{\mathrm{min},i,\bar{r}}^*}^{j_{\mathrm{max},i,\bar{r}}^*}
					\tau_n^{j} \sup_{s\in I_n} \left\| \frac{\d^j}{\d s^j} f(s,U(s)) \right\| \\
			& \quad \leq C \frac{\tau_n}{2} \tau_n^{\min\{\bar{r},\rIfIIi{i}+1\}+i}
				\sum_{j=0}^{j_{\mathrm{max},i,\bar{r}}^*-j_{\mathrm{min},i,\bar{r}}^*}
					\tau_n^{j} \sup_{s\in I_n}
						\left\| \frac{\d^{j_{\mathrm{min},i,\bar{r}}^*+j}}{\d s^{j_{\mathrm{min},i,\bar{r}}^*+j}} f(s,U(s)) \right\|
		\end{align*}
		with a constant $C$ independent of $\tau_n$ where
		\begin{gather*}
			j_{\mathrm{min},i,\bar{r}}^* = \min\{\bar{r},\rIfIIi{i}+1\},
			\qquad
			j_{\mathrm{max},i,\bar{r}}^* = \max\{\kII,\kIf,j_{\mathrm{min},i,\bar{r}}^*\}.
		\end{gather*}
		So, altogether we have
		\begin{align*}
			\left\|\mathrm{(I)}\right\|
				& \leq C \frac{\tau_n}{2} \tau_n^{r_{i,\bar{r}}^{\triangleright \triangleleft}}
				\sum_{j=0}^{j_{\mathrm{max},i,\bar{r}}^{\triangleright \triangleleft}}
					\left(\sup_{s \in I_n} \left\| U^{(j+1)}(s)\right\|
						+ \sup_{s \in I_n} \left\| \frac{\d^j}{\d s^j} f(s,U(s)) \right\| \right)
		\end{align*}
		where for abbreviation we set
		\begin{gather*}
			r_{i,\bar{r}}^{\triangleright \triangleleft}
				:= \min\{\bar{r}+i,\,\rIfIIi{i}+i+1,\,\max\{\min\{\bar{r},\rExII+1\},\,\min\{r,\rIfII+1\}+i\}\}, \\
			j_{\mathrm{max},i,\bar{r}}^{\triangleright \triangleleft}
				:= \max\{\kJJ, \min\{\bar{r},\rIfIIi{i}+1\}, \min\{\bar{r},r,\rIfII+1\}, \min\{\bar{r},\rExII+1\}\}.
		\end{gather*}
		
		Combining~\eqref{eq:helpSuperSplit} with the above estimates for $\mathrm{(I)}$
		and $\mathrm{(II)}$, we gain for $\bar{r}$ sufficiently large
		\begin{multline}
			\label{ieq:superRestTerms}
			\left\| \delta_{0,k} R(t_n^-,t_{n-1}^+) \big[U \big]_{n-1}
				+ \int_{I_{n}} R(t_n^-,s)\df(s)\,\d s \right\| \\
			\leq C \frac{\tau_n}{2}
				\tau_n^{\min\{2r-k+1,(\rIfII+1)+r-k+1,\max\{\rExII+1,\min\{r,\rIfII+1\}\},\rIfIIVar+1\}} G_n
		\end{multline}
		with $\rIfIIVar = \min_{0 \leq i \leq r-k}\{\rIfIIi{i}+i\}$ and
		\begin{gather*}
			G_n := \sum_{i=0}^{r-k+1}
					\sup_{\tilde{s} \in I_n} \left\| \frac{\partial^{i}}{\partial s^{i}} R(t_n^-,\tilde{s}) \right\|
				\sum_{j=0}^{j_{\mathrm{max},i,\bar{r}}^{\triangleright \triangleleft}}
					\left(\sup_{s \in I_n} \left\| U^{(j+1)}(s)\right\|
						+ \sup_{s \in I_n} \left\| \frac{\d^j}{\d s^j} f(s,U(s)) \right\| \right)\!.
		\end{gather*}
		Here note that because of $\rIfIIVar \leq \rIfIIi{r-k}+r-k =\rIfII +r-k$ the second
		term in the minimum on the right-hand side of~\eqref{ieq:superRestTerms} could be dropped.
		Therefore, incorporating~\eqref{ieq:superInitialError}, \eqref{ieq:superSquareError},
		and~\eqref{ieq:superRestTerms} in~\eqref{eq:superDecomposition} gives
		\begin{multline*}
			\left\|e(t_n^-)\right\|
				\leq \bigg(1+\tau_n \sup_{\tilde{s} \in I_n} \bigg\| \frac{\partial}{\partial t} R(\tilde{s},t_{n-1}^+)\bigg\| \bigg)
						\big\|e(t_{n-1}^-)\big\|
					+ C \tau_n \sup_{s \in I_n} \big\|e(s)\big\|^2 \\
					+ C \frac{\tau_n}{2}
						\tau_n^{\min\{2r-k+1,\max\{\rExII+1,\min\{r,\rIfII+1\}\},\rIfIIVar+1\}} G_n.
		\end{multline*}
		A variant of the discrete Gronwall lemma, see~\cite[Proposition~3.3]{Emm99}, applied with
		\begin{align*}
			a_n & = \left\|e(t_n^-)\right\|, \quad  \theta = 0, \quad
			\lambda_n = \sup_{\tilde{s} \in I_n} \big\| \tfrac{\partial}{\partial t} R(\tilde{s},t_{n-1}^+)\big\|, \\
			g_n & = C \sup_{s \in I_n} \big\|e(s)\big\|^2
			+ C/2 \,\tau_n^{\min\{2r-k+1,\max\{\rExII+1,\min\{r,\rIfII+1\}\},\rIfIIVar+1\}} G_n,
		\end{align*}
		then yields
		\begin{align*}
			& \left\|e(t_n^-)\right\|
				\leq \left\|e(t_0^-)\right\| \Bigg(\prod_{j=1}^n \left(1+\lambda_j \tau_j\right)\!\Bigg) \\
			& \quad + C \sum_{\nu=1}^n \tau_\nu \left(\sup_{s \in I_\nu}\big\|e(s)\big\|^2
					+ \frac{1}{2}
					\tau_\nu^{\min\{2r-k+1,\max\{\rExII+1,\min\{r,\rIfII+1\}\},\rIfIIVar+1\}} G_\nu \right)
					\!\Bigg(\prod_{j=\nu}^n \left(1+\lambda_j \tau_j\right) \!\Bigg).
		\end{align*}
		From the well known inequality $(1+x) \leq e^x$, it follows
		\begin{align*}
			\prod_{j=\nu}^n \left(1+\lambda_j \tau_j\right)
				\leq \prod_{j=1}^n \left(1+\lambda_j \tau_j \right)
				\leq \exp\left(\sum_{\nu=1}^n \lambda_\nu \tau_\nu \right)
				\leq \exp\left((t_n-t_0) \max_{\nu=1,\ldots,n} \lambda_\nu \right)\!.
		\end{align*}
		Hence, we conclude that
		\begin{align*}
			\left\|e(t_n^-)\right\|
				& \leq C \left(t_n-t_0\right) \exp\left((t_n-t_0) \max_{\nu=1,\ldots,n} \lambda_\nu \right) \\
				& \qquad \quad \max_{\nu=1,\ldots,n} \left(\sup_{s \in I_\nu}\big\|e(s)\big\|^2
					+ \tau_\nu^{\min\{2r-k+1,\max\{\rExII+1,\min\{r,\rIfII+1\}\},\rIfIIVar+1\}} G_\nu\right)\!
		\end{align*}
		where we also used $e(t_0^-) = 0$.
		
		It remains a small technical detail to verify that $G_\nu$ can be uniformly
		bounded independent of $\tau_\nu$. The term depends on partial derivatives
		of $f$, derivatives of $R$, and on the derivatives of the discrete solution
		$U$, thus, potentially also on the mesh parameter. However, $U$ can
		be uniformly bounded by assumption. So, we are done.
	\end{proof}
	
	\begin{remark}
		Using an alternative argument (inspired by the proof of~\cite[Theorem~II.7.9, pp.~212/213]{HNW08})
		that is based on the application of the nonlinear variation-of-constants
		formula~\cite[Corollary~I.14.6, p.~97]{HNW08}, it can be shown that
		for $1 \leq k \leq r$ the term $\sup_{t \in I} \big\|(u-U)(t)\big\|^2$
		in~\eqref{Eq:ErrSupTheo} is not necessary and can be dropped.
		
		However, for $k=0$ the alternative proof is much more complicated and
		in general only guarantees a worse superconvergence estimate than
		Theorem~\ref{th:superconvergence}. Moreover, for all $k$ the notation
		gets more involved.
	\end{remark}
	
	\begin{lemma}
		\label{le:boundU}
		Suppose that Assumption~\ref{assIIhat} holds along with an estimate
		similar to~\eqref{ieq:estErrorPPn} that at least guarantees approximation
		order $r-1$ for $\PPn$ (e.g. if $\rIfII \geq r-2$). In addition, let the solutions
		$u$ of~\eqref{initValueProb} and $U$ of~\eqref{eq:locProbGen} satisfy
		$\sup_{t \in I_n} \| (u - U)(t) \| \leq C$ 
		for some constant $C$ independent of the mesh parameter. Then we have
		\begin{gather*}
			\sup_{t \in I_n} \big\| U^{(l)}(t) \big\| \leq C(f,u)
		\end{gather*}
		for all $0 \leq l \leq r$.
	\end{lemma}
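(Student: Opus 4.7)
The plan is to proceed by induction on $l \in \{0, 1, \ldots, r\}$. The base case $l = 0$ is immediate: since $u$ is smooth on the compact interval $\overline{I}$, $\sup_{t\in I}\|u(t)\|$ is bounded by a constant depending only on $u$, and the triangle inequality together with the hypothesis yields
\[
\sup_{t\in I_n}\|U(t)\| \leq \sup_{t\in I_n}\|u(t)\| + \sup_{t\in I_n}\|(u-U)(t)\| \leq C(u) + C.
\]

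For the inductive step I would assume $\sup_{t\in I_n}\|U^{(m)}(t)\| \leq C_m(f,u)$ for all $0 \leq m \leq l-1$ with $1 \leq l \leq r$. The central observation is identity~\eqref{eq:derUisApproxOfF}, which gives $U^{(l)} = \big(\PPn f(\cdot,U(\cdot))\big)^{(l-1)}$. Setting $v := f(\cdot, U(\cdot))$ and decomposing via the triangle inequality,
\[
\sup_{t\in I_n}\|U^{(l)}(t)\| \leq \sup_{t\in I_n}\|v^{(l-1)}(t)\| + \sup_{t\in I_n}\big\|\big((\PPn-\Id)v\big)^{(l-1)}(t)\big\|.
\]
The first term is controlled via the Fa\`a di Bruno formula: $v^{(l-1)}$ is a polynomial expression in $U^{(1)}, \ldots, U^{(l-1)}$ with coefficients given by partial derivatives of $f$ evaluated at $(t, U(t))$, and is thus uniformly bounded using the inductive hypothesis and the smoothness of $f$ on a bounded domain containing $\{(t, U(t)) : t \in I_n\}$.

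For the approximation-error term, I would apply the estimate assumed in the lemma (of the type~\eqref{ieq:estErrorPPn}) with $\check{r} = l - 1$. Since $\PPn$ has approximation order at least $r - 1$ and $l \leq r$, the lower summation index equals $l - 1$, so all resulting terms carry non-negative powers of $\tau_n$:
\[
\sup_{t\in I_n}\big\|\big((\PPn-\Id)v\big)^{(l-1)}(t)\big\| \leq C \sum_{j=l-1}^{\max\{\kJJ, l-1\}} \left(\tfrac{\tau_n}{2}\right)^{j-(l-1)} \sup_{t\in I_n}\|v^{(j)}(t)\|.
\]
The term with $j = l - 1$ is again bounded by the inductive hypothesis. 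For $j \geq l$, Fa\`a di Bruno expresses $v^{(j)}$ as a sum of products $\prod_i (U^{(i)})^{\alpha_i}$ with $\sum_i i\alpha_i = j$; factors with $i \leq l - 1$ are controlled by the inductive hypothesis, while factors with $i \geq l$ are handled via the inverse inequality~\eqref{ieq:invIneq} applied in the mixed form $\sup_{t\in I_n} \|U^{(i)}(t)\| \leq C (\tau_n/2)^{l-i} \sup_{t\in I_n} \|U^{(l)}(t)\|$ (valid for $U \in P_r(I_n, \RR^d)$ and $i \geq l$ by applying the inverse inequality to $U^{(l)} \in P_{r-l}$). Accounting for the $\tau_n$-powers then shows that each such contribution is of the form $C (\tau_n/2)^{p} \big(\sup_{t\in I_n} \|U^{(l)}(t)\|\big)^{B}$ with exponent $p \geq 1$ and integer $B \geq 1$.

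Collecting everything yields a self-consistent inequality of the form
\[
\sup_{t\in I_n}\|U^{(l)}(t)\| \leq \widetilde{C} + \sum_{B \geq 1} \widetilde{C}_B \, \tau_n \, \Big(\sup_{t\in I_n}\|U^{(l)}(t)\|\Big)^{B},
\]
with constants $\widetilde{C}, \widetilde{C}_B$ depending only on $f$, $u$, and the lower-order bounds already established. Since $U$ is a polynomial on each $I_n$ the left-hand side is a priori finite, and for $\tau$ sufficiently small this inequality can be resolved by a standard bootstrap argument (the admissible set $\{x\ge 0 : x \le \widetilde{C} + \tau_n Q(x)\}$ is compact and contains a fixed point near $\widetilde{C}$ for small $\tau_n$) to conclude $\sup_{t\in I_n}\|U^{(l)}(t)\| \leq C(f,u)$, closing the induction. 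I expect the hardest part to be precisely this absorption step for small $l$ (namely $l \leq \kJJ$), where the approximation sum genuinely involves derivatives of $v$ of order strictly greater than $l-1$ and the chain rule can produce nonlinear terms in $\|U^{(l)}\|$; for $l > \kJJ$ the sum collapses to the single term $j = l - 1$ and the bound becomes purely algebraic.
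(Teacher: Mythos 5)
Your overall strategy coincides with the paper's: induction over the derivative order, the identity~\eqref{eq:derUisApproxOfF}, the splitting into $\frac{\d^{l-1}}{\d t^{l-1}}f(t,U(t))$ plus the $\PPn$-approximation error, Fa\`a di Bruno for $\frac{\d^j}{\d t^j}f(t,U(t))$, and inverse inequalities to lower the order of the $U$-derivatives that exceed the induction hypothesis. The difference, and the gap, is in the very last step. You stop the reduction at order $l$ (the order currently being estimated), which produces a self-referential inequality $X\le\widetilde C+\tau_n Q(X)$ with $X=\sup_{t\in I_n}\|U^{(l)}(t)\|$ and $Q$ a polynomial of degree possibly $\ge 2$. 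Your proposed resolution does not work as stated: the admissible set $\{x\ge 0: x\le\widetilde C+\tau_n Q(x)\}$ is neither compact nor connected when $\deg Q\ge 2$ (it has the form $[0,x_1]\cup[x_2,\infty)$ for small $\tau_n$), so the mere a priori finiteness of $X$ does not place it in the bounded component near $\widetilde C$. The only a priori control available, $X\le C\tau_n^{-l}$ from the inverse inequality, is in general far larger than $x_2\sim(\tau_n\widetilde{C}_B)^{-1/(B-1)}$, so it cannot be used to select the small branch either; a continuation argument would be needed, and none is given.

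The repair is exactly the paper's route and costs nothing: push the inverse inequality one step further and reduce every factor $U^{(i)}$ with $i\ge l$ down to order $\le l-1$, which is covered by your induction hypothesis. The bookkeeping closes: with prefactor $\tau_n^{\,j-(l-1)}$ and a product $\prod_i\|U^{(i)}\|^{q_i}$ satisfying $\sum_i i\,q_i\le j$, the reduction of the $B:=\sum_{i\ge l}q_i$ high-order factors costs $\sum_{i\ge l}(i-l+1)q_i\le j-(l-1)B$ powers of $\tau_n$, so the net exponent is at least $(l-1)(B-1)\ge 0$. This yields a bound that is linear in quantities already controlled by the induction hypothesis, with no self-consistent inequality and no smallness condition on $\tau_n$ beyond $\tau_n\le 1$. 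Apart from this last step (and the index shift in how you set up the induction), your argument is the paper's.
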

	\begin{proof}
		We first of all note that 
		the assumed estimate for the local error implies
		\begin{gather*}
			\sup_{t \in I_n} \big\| U(t) \big\|
				\leq \sup_{t \in I_n} \big\| u(t) \big\| + \sup_{t \in I_n} \big\| u(t) - U(t) \big\|
				\leq C(f,u).
		\end{gather*}
		Using this as basis we can prove the wanted estimates by induction, exploiting
		the identity~\eqref{eq:derUisApproxOfF}. For $0 \leq l \leq r-1$ we proceed as follows
		\begin{align*}
			\sup_{t \in I_n} \big\| U^{(l+1)}(t) \big\|
			& = \sup_{t \in I_n} \left\| \frac{\d^l}{\d t^l} \PPn f(t,U(t)) \right\| \\
			& \leq \sup_{t \in I_n} \left\| \frac{\d^l}{\d t^l} f(t,U(t)) \right\|
				+ \sup_{t \in I_n} \left\| \frac{\d^l}{\d t^l} \big(\Id - \PPn\big) f(t,U(t)) \right\|.
		\end{align*}
		The first term on the right-hand side contains derivatives of $U$ up to maximal
		order $l$, so can be bounded by $C(f,u)$ due to the induction hypothesis.
		It remains to study the second term.
		
		Using~\eqref{ieq:estErrorPPn} we obtain
		\begin{gather}
			\label{help:UniformUBound}
			\sup_{t \in I_n} \left\| \frac{\d^l}{\d t^l} \big(\Id - \PPn\big) f(t,U(t)) \right\|
				\leq C \sum_{j=j_{\mathrm{min}}^{\bullet}}^{j_{\mathrm{max}}^{\bullet}}
					\tau_n^{j-l} \sup_{t\in I_n} \left\| \frac{\d^j}{\d t^j} f(t,U(t)) \right\|
		\end{gather}
		with some non-negative integer constants
		$r-1 \leq j_{\mathrm{min}}^{\bullet} \leq j_{\mathrm{max}}^{\bullet}$.
		Now, according to a generalization of Fa\`{a} di Bruno's formula,
		see~\cite[Theorem~2.1]{M00} for details, we know that
		\begin{align*}
			\frac{\d^j}{\d t^j} f(t,U(t))
				= \sum_{Q} C_{Q} 
					\frac{\partial^{\sum_{m = 0}^d p_m} f}{\partial t^{p_0} \partial u_1^{p_1} \cdots \partial u_d^{p_d}}(t,U(t))
					\prod_{i=1}^j \left(\big(\tfrac{\d^i}{\d t^i} t\big)^{q_{i0}} \prod_{m=1}^d \big(U_m^{(i)}(t)\big)^{q_{im}}\right)
		\end{align*}
		where the sum is over the non-negative integer solutions
		$Q = (q_{im})_{1 \leq i \leq j,\, 0 \leq m \leq d}$ of
		\begin{gather}
			\label{help:Diophantine}
			\sum_{i=1}^j i \left(\sum_{m=0}^d q_{im}\right) = j
		\end{gather}
		and
		\begin{gather*}
			C_Q := \frac{j!}{\prod_{i=1}^j (i!)^{\sum_{m=0}^d q_{im}} \prod_{i=1}^j\prod_{m=0}^d q_{im}!},\qquad
			p_m := \sum_{i=1}^j q_{im}.
		\end{gather*}
		Note that this formula
		especially implies that derivatives of $U$ appear up to maximal order $j$.
		
		Now, taking into account that $\tfrac{\d^i}{\d t^i} t = \delta_{1,i}$, $i \geq 1$, the above
		summands can be bounded by
		\begin{gather*}
			C \bigg\|\frac{\partial^{\sum_{m = 0}^d p_m} f}{\partial t^{p_0} \partial u_1^{p_1} \cdots \partial u_d^{p_d}}(t,U(t))\bigg\|
					\prod_{i=1}^j \prod_{m=1}^d \big| U_m^{(i)}(t)\big|^{q_{im}}
		\end{gather*}
		where the double product can be further estimated as
		\begin{gather*}
			\prod_{i=1}^j \prod_{m=1}^d \big| U_m^{(i)}(t)\big|^{q_{im}}
				\leq \prod_{i=1}^j \prod_{m=1}^d \big\| U^{(i)}(t)\big\|^{q_{im}}
				= \prod_{i=1}^j \big\| U^{(i)}(t)\big\|^{\sum_{m=1}^d q_{im}}.
		\end{gather*}
		So, applying an inverse inequality multiple times, more precisely at most $j$ times
		because of~\eqref{help:Diophantine}, we obtain
		\begin{multline*}
			\prod_{i=1}^j \big\| U^{(i)}(t)\big\|^{\sum_{m=1}^d q_{im}}
				\leq \prod_{i=1}^j \left(C_\mathrm{inv} \left(\tfrac{\tau_n}{2}\right)^{-i}
					\sup_{t \in I_n} \big\| U(t)\big\| \right)^{\sum_{m=1}^d q_{im}} \\
				= C \left(\tfrac{\tau_n}{2}\right)^{- \overbrace{\scriptstyle \sum_{i=1}^j i(\sum_{m=1}^d q_{im})}^{\leq j}}
					\sup_{t \in I_n} \big\| U(t)\big\|.
		\end{multline*}
		Note that this also implies that after applying an inverse inequality at most $j-l$ times,
		the right-hand side only depends on derivatives of $U$ of order less than or equal to $l$.
		
		Therefore the summands on the right-hand side of~\eqref{help:UniformUBound}
		can be estimated (independent of $\tau_n$) by terms that contain derivatives of $U$
		up to maximal order $l$. Hence, again by the induction hypotheses we gain the upper
		bound $C(f,u)$.
	\end{proof}
	
	Summarizing the above observations, our analysis guarantees the following
	estimates in the time mesh points.

	\begin{corollary}
		\label{Cor:ErrSup}
		Let $r,k \in \ZZ$, $0\leq k \leq r$. Suppose that
		Assumptions~\ref{assIIhat}, \ref{assSupII}, \ref{assSup}, and~\ref{assDiff}
		hold. Moreover, let Assumption~\ref{assPoint_a} or~\ref{assPoint_b} be
		satisfied.
		Let $u$ and $U$ denote the solutions of~\eqref{initValueProb}
		and~\eqref{eq:locProbGen}, respectively.
		Then, if $\rIfII \geq r-2$, we have for $1 \leq n \leq N$
		\begin{gather}
			\label{Eq:ErrSup}
			\big\| (u-U)(t_n^-) \big\|
			\leq C \Big(\tau^{\min\{2r-k+1,\rIfIIVar+1,\max\{\rExII+1,\min\{r,\rIfII+1\}\}\}}
				+ \delta_{0,k} \tau^{2\rIfII+4} \Big),
		\end{gather}
		with $\rIfIIVar := \min_{0 \leq i \leq r-k}\{\rIfIIi{i}+i\}$, $\rIfII = \rIfIIi{r-k}$,
		and $\rIfIIi{i}$ as defined in Definition~\ref{def:approxOrders}.
		
		If $\rIfII < r-2$, we in general cannot ensure the uniform boundedness
		of $U$ and its derivatives since Lemma~\ref{le:boundU} does not hold.
		Then we only have
		\begin{gather*}
			\big\| (u-U)(t_n^-) \big\| \leq \sup_{t \in I_n} \big\| (u-U)(t) \big\|
		\end{gather*}
		where we refer to Corollary~\ref{Cor:ErrEst} for bounds on the right-hand side term.
	\end{corollary}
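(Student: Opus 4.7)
The statement is essentially a consolidation of Theorem~\ref{th:superconvergence}, Lemma~\ref{le:boundU}, and Corollary~\ref{Cor:ErrEst}, so my plan is to chain these results together and verify that the three hypotheses stack consistently.

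In the regime $\rIfII \geq r-2$ I first check the hypotheses of Theorem~\ref{th:superconvergence}. The condition $\max\{\rExII,\rIfII+1\} \geq r-1$ needed for the sharpened estimate~\eqref{Eq:ErrEst2} is automatic, hence Corollary~\ref{Cor:ErrEst} provides $\sup_{t \in I}\|(u-U)(t)\| \leq C\tau^{m}$ with $m$ denoting the minimum exponent in~\eqref{Eq:ErrEst2}. In particular, the global error is bounded by a constant for $\tau$ small, which is the prerequisite of Lemma~\ref{le:boundU}; the approximation-order requirement on $\PPn$ in that lemma is met because $\rIfII \geq r-2$ forces $j_{\mathrm{min},\check{r}}^{\bullet} \geq r-1$ in~\eqref{ieq:estErrorPPn}. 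Consequently $\sup_{t \in I_n}\|U^{(l)}(t)\| \leq C(f,u)$ for $0 \leq l \leq r$, and all hypotheses of Theorem~\ref{th:superconvergence} are in force. Its application yields estimate~\eqref{Eq:ErrSupTheo}, leaving only the quadratic term $\sup_{t \in I}\|(u-U)(t)\|^2 \leq C\tau^{2m}$ to be absorbed.

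The key algebraic step is the case distinction on $k$. For $k=0$, the quantity $\rIfII+2$ is always one of the terms in the minimum defining $m$, so $2m \leq 2\rIfII+4$ is the safe worst case and produces exactly the explicit $\delta_{0,k}\tau^{2\rIfII+4}$ term in the statement. For $k \geq 1$, I would verify that $2m$ majorizes the superconvergence exponent $\min\{2r-k+1,\rIfIIVar+1,\max\{\rExII+1,\min\{r,\rIfII+1\}\}\}$: since $m \geq r+1$ one has $2m \geq 2r-k+1$ when $k \geq 1$, and since $m \geq \rIfII+2$ together with $\rIfIIVar \leq \rIfII+(r-k)$ (obtained by choosing $i=r-k$ in the defining minimum) the inequality $2m \geq \rIfIIVar+1$ reduces to $\rIfII \geq r-k-3$, which holds under the assumption $\rIfII \geq r-2$ as soon as $k \geq 1$. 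Thus the squared term is absorbed into the principal order and the Kronecker factor $\delta_{0,k}$ is justified.

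In the complementary regime $\rIfII < r-2$, the chain breaks at Lemma~\ref{le:boundU}, so no control of the high derivatives of $U$ is available and Theorem~\ref{th:superconvergence} cannot be invoked. I would simply record the trivial inequality $\|(u-U)(t_n^-)\| \leq \sup_{t \in I_n}\|(u-U)(t)\|$ and refer to Corollary~\ref{Cor:ErrEst} for a bound on the right-hand side. The only real obstacle in the proof is the algebraic dominance check for $k \geq 1$; once one keeps track of the inequalities $\rIfIIVar \leq \rIfII+(r-k)$ and $\rIfIIi{0} \geq \rIfII$, this verification is elementary arithmetic on the integer parameters $r$, $k$, $\rIfII$, $\rExII$.
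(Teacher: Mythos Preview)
Your overall strategy is exactly the one implicit in the paper: combine Corollary~\ref{Cor:ErrEst} (to bound the global error and verify the boundedness hypothesis), Lemma~\ref{le:boundU} (to control $U^{(l)}$, which needs precisely $\rIfII\ge r-2$), and Theorem~\ref{th:superconvergence}. The structure and the case split on $\rIfII\ge r-2$ versus $\rIfII<r-2$ are correct.

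However, your algebraic verification of how the squared term $\sup_{t\in I}\|(u-U)(t)\|^2\le C\tau^{2m}$ is absorbed contains a genuine error: you repeatedly reverse the direction of the inequalities. Since $m$ is the \emph{minimum} in~\eqref{Eq:ErrEst2}, you have $m\le r+1$ and $m\le \rIfII+2$, not $m\ge r+1$ and $m\ge \rIfII+2$ as you write. Likewise, for $k=0$ your observation $2m\le 2\rIfII+4$ is correct but useless: it says $\tau^{2m}\ge\tau^{2\rIfII+4}$, so $\tau^{2\rIfII+4}$ cannot serve as an upper bound for $\tau^{2m}$. What you actually need is a \emph{lower} bound on $2m$ relative to the superconvergence exponent.

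The repair is a short case analysis on which term realises the minimum $m$. If $m=r+1$ then $2m=2r+2\ge 2r-k+1\ge s$; if $m=\rIfIIi{0}+1$ then $s\le \rIfIIVar+1\le \rIfIIi{0}+1=m\le 2m$; if $m=M:=\max\{\rExII+1,\min\{r,\rIfII+1\}\}$ then $s\le M=m\le 2m$; and if $m=\rIfII+2$ then, using $\rIfII\ge r-2$, one gets $2m\ge 2r$ and (via $\rIfIIVar\le \rIfII+(r-k)$) $s\le 2r-k+1$, so for $k\ge 1$ again $2m\ge s$. This is the arithmetic that actually justifies dropping the squared term when $k\ge 1$; for $k=0$ the same case analysis shows $2m\ge s$ in the first three cases and $2m=2\rIfII+4$ in the last, which is what the explicit $\delta_{0,k}\tau^{2\rIfII+4}$ term records.
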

	
	\section{Numerical experiments}
	\label{sec:numerics}
	
	We consider the initial value problem
	\begin{equation*}
		\begin{pmatrix}u_1'(t)\\u_2'(t)\end{pmatrix}
		= \begin{pmatrix} -u_1^2(t)-u_2(t)\\ u_1(t)-u_1(t)u_2(t)\end{pmatrix},
		\quad t\in(0,32),\qquad 
		u(0) = \begin{pmatrix} 1/2 \\ 0\end{pmatrix}\!,
	\end{equation*}
	of a system of nonlinear ordinary differential equations which has
	\begin{equation*}
	u_1(t) = \frac{\cos t}{2+\sin t},\qquad u_2(t) = \frac{\sin t}{2+\sin t}
	\end{equation*}
	as solution.
	
	The appearing nonlinear systems within each time step were solved by
	Newton's method where we applied a Taylor expansion of the inherited data
	from the previous time interval to calculate an initial guess for all
	unknowns on the current interval. If higher order derivatives were needed
	at initial time $t_0=0$, we apply
	\begin{equation*}
	\begin{alignedat}{2}
	u^{(0)}(t_0) & := u_0,&\qquad
	u^{(2)}(t_0) & := \partial_t f\big(t_0, u(t_0)\big) + \partial_u f\big(t_0,u(t_0)\big) u^{(1)}(t_0), \\
	                u^{(1)}(t_0) & := f\big(t_0,u(t_0)\big), &\qquad
	u^{(j)}(t_0) & := \frac{\d^{j-1}}{\d t^{j-1}} f\big(t,u(t)\big)
	\big|_{t=t_0}, \quad j \geq 3,
	\end{alignedat}
	\end{equation*}
	based on the ode system~\eqref{initValueProb} and its derivatives.
	
	By considering different choices for $\II$ and $\If_n$, we will show that
	our theory provides sharp bounds on the convergence order. Since $\II$ and $\If_n$ are
	obtained from $\IIhat$ and $\Ifhat$ via transformation, only the reference operators
	will be specified.
	
    Each integrator $\IIhat$ that has been used in our calculations is based on
    Lagrangian interpolation with respect to a specific node set $P_{\IIhat}$. Hence,
	we have $\kII=0$. The interpolation operator $\Ifhat$ is of Lagrange-type and
	uses the node set $P_{\Ifhat}$. This means that $\kIf=0$.
	Both node sets will be given for each of our test
	cases. Since often nodes of quadrature formulas are used, we will write for
	instance "left Gauss--Radau($k$)" to indicate that the nodes of the
	left-sided Gauss--Radau formula with $k$ points have been used. All
	upcoming settings fulfill Assumption~\ref{assIIhat}.
	
	For all test cases, the method $\vtd{6}{3}$, which is $\mathrm{cGP}\text{-}C^1(6)$,
	was applied as discretization. All calculations were carried out with the software
	Julia~\cite{julia} using the floating point data type \texttt{BigFloat} with 512
	bits. Errors will be measured in the norms
	\begin{gather*}
		\|\varphi\|_{L^\infty} := \sup_{t\in I} \|\varphi(t)\|,\qquad
		\|\varphi\|_{\ell^\infty} := \max_{1\le n\le N} \|\varphi(t_n^-)\|
	\end{gather*}
	where $\|\cdot\|$ denotes the Euclidean norm in $\RR^d$.
	
	\subsection{Case group 1}
	\begin{case} 
	\label{case1}
	Choosing integration and interpolation according to
	\begin{equation*}
		\textstyle
		P_{\IIhat} = \left\{-\frac{3}{4}, -\frac{1}{4}, \frac{1}{4}, \frac{3}{4}\right\}\!,
		\qquad
		P_{\Ifhat} = \text{left Gauss--Radau($3$)}
	\end{equation*}
	leads to $\rExII=3$ and $\rIfII=2$. Hence, the condition
	$\max\{\rExII,\rIfII+1\} \geq r-1$ in Corollary~\ref{Cor:ErrEst} is violated and
	the expected convergence orders for both the $L^\infty$ norm and the
	$W^{1,\infty}$ seminorm are given by
	$\min\{r,\rIfII+1\}=3$, see~\eqref{Eq:ErrEst1}.
	\begin{table}[htb!]
	\begin{center}
	\caption{Errors and convergence orders in different norms for case~\ref{case1}.}
	\label{Tab:case1}
	\begin{tabular}{rclclcl}
	\toprule
	$N$ & $\|e\|_{L^\infty}$ & ord & $\|e'\|_{L^\infty}$ & ord & $\|e\|_{\ell^\infty}$ & ord\\
	\cmidrule(lr){1-1}
	\cmidrule(lr){2-3}
	\cmidrule(lr){4-5}
	\cmidrule(lr){6-7}
	   32 & 1.648-03 &      & 1.126-02 &      & 9.439-04 &      \\
	   64 & 1.413-04 & 3.54 & 1.423-03 & 2.98 & 1.046-04 & 3.17 \\
	  128 & 1.408-05 & 3.33 & 1.876-04 & 2.92 & 1.264-05 & 3.05 \\
	  256 & 1.615-06 & 3.12 & 2.369-05 & 2.99 & 1.559-06 & 3.02 \\
	  512 & 1.961-07 & 3.04 & 2.965-06 & 3.00 & 1.937-07 & 3.01 \\
	 1024 & 2.426-08 & 3.01 & 3.711-07 & 3.00 & 2.415-08 & 3.00 \\
	\cmidrule(lr){1-1}
	\cmidrule(lr){2-3}
	\cmidrule(lr){4-5}
	\cmidrule(lr){6-7}
	 theo &          & 3    &          & 3    &          & 3\\
	\bottomrule
	\end{tabular}
	\end{center}
	\end{table}
	It can be seen from Table~\ref{Tab:case1} that the theoretical predictions
	are met by the numerical experiments. Moreover, in accordance with
	Corollary~\ref{Cor:ErrSup} the $\ell^\infty$ convergence order is also 
	just $3$. This means that the uniform boundedness of $\sup_{t\in I}\|U^{(l)}(t)\|$
	required by Theorem~\ref{th:superconvergence}, which cannot be guaranteed
	because of $\rIf+1 < r-1$, is violated since otherwise~\eqref{Eq:ErrSupTheo}
	would yield order $4$.
	\end{case}
	
	The condition $\max\{\rExII,\rIfII+1\} \geq r-1$ of Corollary~\ref{Cor:ErrEst}
	will be fulfilled for all coming cases. Hence, the computations should show
	the convergence order given by~\eqref{Eq:ErrEst2} for the
	$L^\infty$ norm.
	
	\subsection{Case group 2}
	This group of cases provides choices for $P_{\IIhat}$ and $P_{\Ifhat}$ such
	that the $L^\infty$ convergence order is limited by the maximum expression
	inside the outer minimum in~\eqref{Eq:ErrEst2}. In addition, the presented
	cases will show that each of the three terms occuring in the maximum term
	can limit the convergence order. We will indicate in the following the
	limiting term in boldface.
	
	\begin{subtheorem}{case} 
	\begin{case} 
	\label{case2a}
	The choices 
	\begin{equation*}
		\textstyle
		P_{\IIhat} = \left\{-\frac{3}{4}, -\frac{1}{4}, \frac{1}{4}, \frac{3}{4}\right\}\!,
		\qquad
		P_{\Ifhat} = \text{Gauss($5$)}
	\end{equation*}
	provide the $L^\infty$ convergence order
	$\min\{7,6,6,\max\{4,\min\{6,\boldsymbol{5}\}\}\}=5$
	where the convergence order is limited by the second term inside the inner
	minimum.
	\begin{table}[htb!]
	\begin{center}
	\caption{Errors and convergence orders in the $L^\infty$ norm for the cases of group~2.}
	\label{Tab:case2}
	\begin{tabular}{rclclclcl}
	\toprule
	 & \multicolumn{2}{c}{case 2a} &
	   \multicolumn{2}{c}{case 2a*} &
	   \multicolumn{2}{c}{case 2b} &
	   \multicolumn{2}{c}{case 2c} \\
	$N$ & $\|e\|_{L^\infty}$ & ord & $\|e\|_{L^\infty}$
	& ord & $\|e\|_{L^\infty}$ & ord & $\|e\|_{L^\infty}$ & ord\\
	\cmidrule(lr){1-1}
	\cmidrule(lr){2-3}
	\cmidrule(lr){4-5}
	\cmidrule(lr){6-7}
	\cmidrule(lr){8-9}
	   32 & 8.482-06 &      & 7.745e-06 &      & 8.910-07 &      & 1.996-06 &     \\
	   64 & 1.477-07 & 5.84 & 1.500e-07 & 5.69 & 9.394-09 & 6.57 & 2.792-08 & 6.16\\
	  128 & 2.398-09 & 5.95 & 3.498e-09 & 5.42 & 1.081-10 & 6.44 & 4.251-10 & 6.04\\
	  256 & 3.759-11 & 6.00 & 9.412e-11 & 5.22 & 1.465-12 & 6.21 & 6.604-12 & 6.01\\
	  512 & 5.862-13 & 6.00 & 2.775e-12 & 5.08 & 2.175-14 & 6.07 & 1.034-13 & 6.00\\
	 1024 & 9.160-15 & 6.00 & 8.508e-14 & 5.03 & 3.344-16 & 6.02 & 1.617-15 & 6.00\\
	\cmidrule(lr){1-1}
	\cmidrule(lr){2-3}
	\cmidrule(lr){4-5}
	\cmidrule(lr){6-7}
	\cmidrule(lr){8-9}
	 theo &          & 5    &          & 5    &          & 6    &          & 6\\
	\bottomrule
	\end{tabular}
	\end{center}
	\end{table}
	We see from Table~\ref{Tab:case2} that the experimental order of convergence
	is $6$, i.e., one order higher than expected. This behavior can be
	explained by a closer look to Lemma~\ref{le:errorJJnPoint}. The proof there
	guarantees in this case that $(v - \JJn v)(t_n^-)=0$ for all $v \in P_5(I_n)$. However,
	due to symmetry reasons, it holds $\int_{I_n} (v-\JJn v)'(t)\,\d t = 0$ for
	all $v \in P_6(I_n)$ which implies that $(v - \JJn v)(t_n^-)=0$ even for
	$v \in P_6(I_n)$. Thus, the convergence order of the limiting term is actually
	better than predicted.
	
	Taking the same setting for $P_{\IIhat}$ but using
	\begin{equation*}
		\textstyle P_{\Ifhat} = \left\{ -\frac{5}{6}, -\frac{13}{23}, \frac{1}{10}, \frac{12}{17}, \frac{4}{5} \right\}\!,
	\end{equation*}
	the convergence order predicted by~\eqref{Eq:ErrEst2} is again
	$\min\{7,6,6,\max\{4,\min\{6,\boldsymbol{5}\}\}\}=5$. The limitation is here also
	caused by the second argument of the inner minimum.
	Table~\ref{Tab:case2} shows under case~\ref{case2a}* that this convergence
	order is obtained in the numerical experiments.
	Note that here the interpolation points just were chosen such that still
	$\rIfIIi{0}=5 > 4 = \rIfII$, i.e., especially
	$\IIhat\big[(\hat{v}-\JJhat \hat{v})'\big] = \IIhat\big[\hat{v}'- \Ifhat( \hat{v}')\big] =0$
	for $\hat{v}(\,\hat{t}\,)=\hat{t}\,^6$, but
	$\int_{-1}^1 (\hat{v}-\JJhat \hat{v})'(\,\hat{t}\,) \,\d\hat{t}\neq 0$ for
	$\hat{v}(\,\hat{t}\,)=\hat{t}\,^6$ which ensures that the estimate of
	Lemma~\ref{le:errorJJnPoint} is sharp.
	\end{case}

	\begin{case} 
	\label{case2b}
	If we set
	\begin{equation*}
		\textstyle
		P_{\IIhat} = \left\{-\frac{3}{4}, -\frac{1}{4}, \frac{1}{4}, \frac{3}{4}\right\}\!,
		\qquad
		P_{\Ifhat} = \left\{-\frac{3}{4}, -\frac{1}{4}, \frac{1}{4}, \frac{3}{4}\right\}\!,
	\end{equation*}
	the expected convergence order is
	$\min\{7,\infty,\infty, \max\{4,\min\{\boldsymbol{6},\infty\}\}\} = 6$ where the
	limitation comes from the first argument of the inner minimum. The numerical
	results given in Table~\ref{Tab:case2} clearly show this convergence order.
	\end{case}
	
	\begin{case} 
	\label{case2c}
	Choosing
	\begin{equation*}
		\textstyle
		P_{\IIhat} = \left\{ -1, -\frac{3}{5}, -\frac{1}{5}, \frac{1}{5}, \frac{3}{5}, 1 \right\}\!,
		\qquad
		\textstyle P_{\Ifhat} = \left\{ -1, -\frac{3}{5}, -\frac{1}{5}, \frac{1}{5}, \frac{3}{5}, 1 \right\}\!,
	\end{equation*}
	results in the convergence order
	$\min\{7,\infty,\infty, \max\{\boldsymbol{6},\min\{\boldsymbol{6},\infty\}\}\}=6$.
	The first argument of the maximum acts as limitation. The numerical results
	in Table~\ref{Tab:case2} show that the expected convergence order is
	obtained. Note that it is not possible that $\rExII+1$ is the only
	limiting term since the structure of~\eqref{Eq:ErrEst2} implies that
	$\min\{r+1,\,\rIfII+2\} \geq \rExII +1 \geq \min\{r,\rIfII+1\}$
	if $\rExII+1$ is limiting. Hence, the integer $\rExII+1$ 
	coincides either with $\min\{r+1,\,\rIfII+2\}$ or $\min\{r,\rIfII+1\}$.
	\end{case}
	\end{subtheorem}
	
	\subsection{Case group 3}
	This group of cases studies the convergence orders in the $L^\infty$ norm
	and the $W^{1,\infty}$ seminorm. The presented choices will show that each
	of the first three expressions in the outer minimum in~\eqref{Eq:ErrEst2}
	can bound the $L^\infty$ convergence order. Moreover, the cases will
	demonstrate that the convergence order in the $W^{1,\infty}$ seminorm can
	be limited by both occuring terms in~\eqref{Eq:ErrEst1}. Again, the
	limiting numbers will be given in boldface.
	
	\begin{subtheorem}{case} 
	\begin{case} 
	\label{case3a}
	The choice
	\begin{equation*}
		\textstyle
		P_{\IIhat} = \text{Gauss($6$)},
		\qquad
		P_{\Ifhat} = \left\{ -1, -\frac{1}{2}, \frac{1}{4}, \frac{3}{4}, 1 \right\}
	\end{equation*}
	results in
	\begin{equation*}
		L^\infty\text{~order} = \min\{7,6,\boldsymbol{5},\max\{12,\min\{6,5\}\}\} = 5,
		\qquad W^{1,\infty}\text{~order} = \min\{6,\boldsymbol{5}\} = 5.
	\end{equation*}
	Hence, the third argument in the outer minimum determines the convergence
	order for the $L^\infty$ norm while the second argument of the minimum
	limits the convergence order of the $W^{1,\infty}$ seminorm. 
	\begin{table}[htb!]
	\begin{center}
	\caption{Errors and convergence orders in different norms for
	the cases~\ref{case3a} and~\ref{case3b}.}
	\label{Tab:case3ab}
	\begin{tabular}{rclclclcl}
	\toprule
	 & \multicolumn{4}{c}{case 3a} &
	   \multicolumn{4}{c}{case 3b} \\
	\cmidrule(lr){2-5}
	\cmidrule(lr){6-9}
	$N$ & $\|e\|_{L^\infty}$ & ord & $\|e'\|_{L^\infty}$ & ord
	& $\|e\|_{L^\infty}$ & ord & $\|e'\|_{L^\infty}$ & ord\\
	\cmidrule(lr){1-1}
	\cmidrule(lr){2-3}
	\cmidrule(lr){4-5}
	\cmidrule(lr){6-7}
	\cmidrule(lr){8-9}
	   32 & 2.306-05 &      & 3.193-04 &      & 1.242-03 &      & 1.717-02 &     \\
	   64 & 3.786-07 & 5.93 & 1.044-05 & 4.94 & 8.223-05 & 3.92 & 2.169-03 & 2.98\\
	  128 & 7.266-09 & 5.70 & 3.411-07 & 4.94 & 5.037-06 & 4.03 & 2.842-04 & 2.93\\
	  256 & 1.716-10 & 5.40 & 1.072-08 & 4.99 & 3.069-07 & 4.04 & 3.581-05 & 2.99\\
	  512 & 4.688-12 & 5.19 & 3.353-10 & 5.00 & 1.886-08 & 4.02 & 4.479-06 & 3.00\\
	 1024 & 1.400-13 & 5.07 & 1.048-11 & 5.00 & 1.168-09 & 4.01 & 5.604-07 & 3.00\\
	\cmidrule(lr){1-1}
	\cmidrule(lr){2-3}
	\cmidrule(lr){4-5}
	\cmidrule(lr){6-7}
	\cmidrule(lr){8-9}
	 theo &          & 5    &          & 5    &          & 4    &          & 3\\
	\bottomrule
	\end{tabular}
	\end{center}
	\end{table}
	The numerical results in Table~\ref{Tab:case3ab} provide the predicted
	convergence orders.
	\end{case}
	
	\begin{case} 
	\label{case3b}
	Setting
	\begin{equation*}
		\textstyle
		P_{\IIhat} = \text{Gauss($6$)},
		\qquad
		P_{\Ifhat} = \text{left Gauss--Radau($3$)}
	\end{equation*}
	gives
	\begin{equation*}
		L^\infty\text{~order} =
		\min\{7,\boldsymbol{4},5,\max\{12,\min\{6,3\}\}\} = 4,
		\qquad
		W^{1,\infty}\text{~order} = \min\{6,\boldsymbol{3}\} = 3.
	\end{equation*}
	The convergence order in the $W^{1,\infty}$ seminorm is again determinded
	by the second argument of the corresponding minimum. The limitation of the
	convergence order of the $L^\infty$ norm is caused by the second term. We
	clearly see from Table~\ref{Tab:case3ab} that the expected convergence
	orders are obtained by the numerical simulations.
	\end{case}
	
	\begin{case} 
	\label{case3c}
	If we take
	\begin{equation*}
		P_{\IIhat} = \text{Gauss($6$)},
		\qquad
		P_{\Ifhat} = \text{Gauss($5$)},
	\end{equation*}
	we get
	\begin{align*}
		L^\infty\text{~order} &= \min\{\boldsymbol{7},8,10,\max\{12,\min\{6,7\}\}\} = 7,
		\qquad 
		W^{1,\infty}\text{~order} = \min\{\boldsymbol{6},7\} = 6,\\
		\ell^\infty\text{~order} &=
		\min\{\boldsymbol{10},\boldsymbol{10},\max\{12,\min\{6,7\}\}\} = 10.
	\end{align*}
	The convergence orders of the $L^\infty$ norm and the $W^{1,\infty}$
	seminorm are limited by the first argument in the corresponding minimum
	expressions. The numerical results in Table~\ref{Tab:case3c} indicate that
	the predicted orders are achieved.
	\begin{table}[htb!]
	\begin{center}
	\caption{Errors and convergence orders in different norms for case~\ref{case3c}.}
	\label{Tab:case3c}
	\begin{tabular}{rclclcl}
	\toprule
	$N$ & $\|e\|_{L^\infty}$ & ord & $\|e'\|_{L^\infty}$ & ord & $\|e\|_{\ell^\infty}$ & ord\\
	\cmidrule(lr){1-1}
	\cmidrule(lr){2-3}
	\cmidrule(lr){4-5}
	\cmidrule(lr){6-7}
	   32 & 7.603-07 &      & 1.669-05 &      & 7.587-10 &      \\
	   64 & 6.524-09 & 6.86 & 2.870-07 & 5.86 & 7.087-13 & 10.06\\
	  128 & 5.181-11 & 6.98 & 4.556-09 & 5.98 & 6.862-16 & 10.01\\
	  256 & 4.068-13 & 6.99 & 7.155-11 & 5.99 & 6.689-19 & 10.00\\
	  512 & 3.181-15 & 7.00 & 1.119-12 & 6.00 & 6.529-22 & 10.00\\
	 1024 & 2.486-17 & 7.00 & 1.749-14 & 6.00 & 6.375-25 & 10.00\\
	\cmidrule(lr){1-1}
	\cmidrule(lr){2-3}
	\cmidrule(lr){4-5}
	\cmidrule(lr){6-7}
	 theo &          & 7    &          & 6    &          & 10\\
	\bottomrule
	\end{tabular}
	\end{center}
	\end{table}
	The additionally presented results in
	the $\ell^\infty$ show also the predicted behavior. Note that all three
	error expressions show the optimal convergence orders which are 
	also obtained if exact integration is used and $\If$ is the identity operator.
	\end{case}
	\end{subtheorem}
	
	\subsection{Case group 4}
	This group of cases studies the superconvergence. Hence, we will restrict
	ourselves to cases where the convergence order in the $\ell^\infty$ norm
	suggested by~\eqref{Eq:ErrSup} is stricly greater than the convergence order
	in the $L^\infty$ norm given by~\eqref{Eq:ErrEst2}. We will show for this situation
	that the first two arguments in the minimum in~\eqref{Eq:ErrSup} and the first
	argument inside the maximum there can limit the $\ell^\infty$ convergence order.
	We remind that the limiting term will be written in boldface.
	\begin{subtheorem}{case} 
	\begin{case} 
	\label{case4a}
	The choice
	\[
	P_{\IIhat} = \text{Gauss($6$)},
	\qquad
	P_{\Ifhat} = \text{Gauss--Lobatto($5$)}
	\]
	leads to
	\begin{align*}
	L^\infty\text{~order} & = \min\{7,\boldsymbol{6},8,\max\{12,\min\{6,5\}\}\} = 6,\\
	\ell^\infty\text{~order} & = \min\{10,\boldsymbol{8},\max\{12,\min\{6,5\}\}\} = 8,
	\end{align*}
	see~\eqref{Eq:ErrEst2} and~\eqref{Eq:ErrSup}. The convergence order in
	$\ell^\infty$ is bounded by the second argument of the minimum expression.
	\begin{table}[htb!]
	\begin{center}
	\caption{Errors and convergence orders in different norms for the cases of
	group 4.}
	\label{Tab:case4}
	\begin{tabular}{rclclclcl}
	\toprule
	 & \multicolumn{4}{c}{case 4a} &
	   \multicolumn{4}{c}{case 4b} \\
	\cmidrule(lr){2-5}
	\cmidrule(lr){6-9}
	$N$ & $\|e\|_{L^\infty}$ & ord & $\|e\|_{\ell^\infty}$ & ord & $\|e\|_{L^\infty}$ & ord & $\|e\|_{\ell^\infty}$ & ord\\
	\cmidrule(lr){1-1}
	\cmidrule(lr){2-3}
	\cmidrule(lr){4-5}
	\cmidrule(lr){6-7}
	\cmidrule(lr){8-9}
	   32 & 1.058-05 &      & 8.552-08 &      & 7.560-07 &      & 5.899-10 &      \\
	   64 & 1.717-07 & 5.94 & 3.348-10 & 8.00 & 6.529-09 & 6.86 & 5.610-13 & 10.04\\
	  128 & 2.835-09 & 5.92 & 1.310-12 & 8.00 & 5.183-11 & 6.98 & 5.452-16 & 10.01\\
	  256 & 4.464-11 & 5.99 & 5.120-15 & 8.00 & 4.068-13 & 6.99 & 5.318-19 & 10.00\\
	  512 & 6.981-13 & 6.00 & 2.005-17 & 8.00 & 3.181-15 & 7.00 & 5.192-22 & 10.00\\
	 1024 & 1.092-14 & 6.00 & 7.833-20 & 8.00 & 2.486-17 & 7.00 & 5.070-25 & 10.00\\
	\cmidrule(lr){1-1}
	\cmidrule(lr){2-3}
	\cmidrule(lr){4-5}
	\cmidrule(lr){6-7}
	\cmidrule(lr){8-9}
	 theo &          & 6    &          & 8    &          & 7    &          & 10\\
	\\[-1ex]
	 & \multicolumn{4}{c}{case 4c} &
	   \multicolumn{4}{c}{case 4d} \\
	\cmidrule(lr){2-5}
	\cmidrule(lr){6-9}
	$N$ & $\|e\|_{L^\infty}$ & ord & $\|e\|_{\ell^\infty}$ & ord & $\|e\|_{L^\infty}$ & ord & $\|e\|_{\ell^\infty}$ & ord\\
	\cmidrule(lr){1-1}
	\cmidrule(lr){2-3}
	\cmidrule(lr){4-5}
	\cmidrule(lr){6-7}
	\cmidrule(lr){8-9}
	   32 & 1.617-06 &      & 4.037-08 &      & 7.603-04 &      & 1.867-04 &     \\
	   64 & 1.387-08 & 6.86 & 1.654-10 & 7.93 & 4.955-05 & 3.94 & 1.132-05 & 4.04\\
	  128 & 1.101-10 & 6.98 & 6.522-13 & 7.99 & 3.219-06 & 3.94 & 7.017-07 & 4.01\\
	  256 & 8.654-13 & 6.99 & 2.565-15 & 7.99 & 2.028-07 & 3.99 & 4.377-08 & 4.00\\
	  512 & 6.759-15 & 7.00 & 1.002-17 & 8.00 & 1.268-08 & 4.00 & 2.735-09 & 4.00\\
	 1024 & 5.283-17 & 7.00 & 3.916-20 & 8.00 & 7.929-10 & 4.00 & 1.710-10 & 4.00\\
	\cmidrule(lr){1-1}
	\cmidrule(lr){2-3}
	\cmidrule(lr){4-5}
	\cmidrule(lr){6-7}
	\cmidrule(lr){8-9}
	 theo &          & 7    &          & 8    &          & 4    &          & 4\\
	\bottomrule
	\end{tabular}
	\end{center}
	\end{table}
	As viewable in Table~\ref{Tab:case4}, the expected convergence orders are
	obtained.
	\end{case}
	
	\begin{case} 
	\label{case4b}
	Setting
	\[
	P_{\IIhat} = \text{Gauss($6$)},
	\qquad
	P_{\Ifhat} = \text{Gauss($6$)},
	\]
	the convergence orders
	\begin{align*}
	L^\infty\text{~order} &=
	\min\{\boldsymbol{7},\infty,\infty,\max\{12,\min\{6,\infty\}\}\} = 7,\\
	\ell^\infty\text{~order} &= \min\{\boldsymbol{10},\infty,\max\{12,\min\{6,\infty\}\}\} = 10 
	\end{align*}
	are expected by our theory. Hence, the convergence order in the
	$\ell^\infty$ norm is limited by the first term inside the minimum
	in~\eqref{Eq:ErrSup}. The numerical results coincide with our predictions.
	\end{case}
	
	\begin{case} 
	\label{case4c}
	Taking 
	\[
	P_{\IIhat} = \text{Gauss($4$)},
	\qquad
	P_{\Ifhat} = \text{Gauss($4$)}
	\]
	provides
	\begin{align*}
	L^\infty\text{~order} &=
	\min\{\boldsymbol{7},\infty,\infty,\max\{8,\min\{6,\infty\}\}\} = 7,\\
	\ell^\infty\text{~order} &= \min\{10,\infty,\max\{\boldsymbol{8},\min\{6,\infty\}\}\} = 8,
	\end{align*}
	compare~\eqref{Eq:ErrEst2} and~\eqref{Eq:ErrSup}. The limitation of the
	$\ell^\infty$ convergence is caused by the first argument of the maximum
	in~\eqref{Eq:ErrSup}. The results in Table~\ref{Tab:case4} show clearly the
	superconvergence since the convergence order in the $\ell^\infty$ norm
	is one higher than the convergence order in the $L^\infty$ norm.
	\end{case}
	
	\begin{case} 
	\label{case4d}
	Choosing
	\[
	P_{\IIhat} = \text{Gauss($6$)},
	\qquad
	P_{\Ifhat} = \text{Gauss($3$)},
	\]
	the estimates~\eqref{Eq:ErrEst2} and~\eqref{Eq:ErrSup} suggest
	\begin{align*}
	L^\infty\text{~order} &= \min\{7,\boldsymbol{4},6,\max\{12,\min\{6,3\}\}\} = 4,\\
	\ell^\infty\text{~order} &= \min\{10,\boldsymbol{6},\max\{12,\min\{6,3\}\}\} = 6.
	\end{align*}
	\end{case}
	However, since the uniform boundedness
	of $\sup_{t\in I}\|U^{(l)}(t)\|$ assumed in Theorem~\ref{th:superconvergence}
	cannot be ensured by Lemma~\ref{le:boundU} due to $\rIf+1 = 3 < 5 = r-1$,
	we actually do not expect any
	superconvergence. These expectations are confirmed by the numerical results
	given in Table~\ref{Tab:case4}. They show that for both the $L^\infty$ and
	the $\ell^\infty$ norm convergence order $4$ is obtained.
	\end{subtheorem}
	
	\subsection{Summary}
	The experimentally obtained and theoretically predicted
	convergence orders for all cases and all considered norms are collected
	in Table~\ref{Tab:allcases}. The experimental orders of
	convergence were calculated using the results obtained for $256$ and
	$512$ time steps.
	\begin{table}[htb!]
	\begin{center}
	\caption{Errors and convergence orders in different norms for all cases.}
	\label{Tab:allcases}
	\begin{tabular}{lrrrrrr}
	\toprule
	 & \multicolumn{2}{c}{$\|e\|_{L^\infty}$} &
	   \multicolumn{2}{c}{$\|e'\|_{L^\infty}$} &
	   \multicolumn{2}{c}{$\|e\|_{\ell^\infty}$} \\
	\cmidrule(lr){2-3}
	\cmidrule(lr){4-5}
	\cmidrule(lr){6-7}
	\multicolumn{1}{c}{case} & eoc & theo & eoc & theo & eoc & theo\\
	\cmidrule(lr){1-1}
	\cmidrule(lr){2-3}
	\cmidrule(lr){4-5}
	\cmidrule(lr){6-7}
	\ref{case1} & 3.04 & 3 & 3.00 & 3 & 3.01 & 3 \\
	\ref{case2a} & 6.00 & 5 & 5.00 & 5 & 6.00 & 5 \\
	\ref{case2a}* & 5.08 & 5 & 5.00 & 5 & 5.00 & 5 \\
	\ref{case2b} & 6.07 & 6 & 6.00 & 6 & 6.00 & 6 \\
	\ref{case2c} & 6.00 & 6 & 5.99 & 6 & 6.00 & 6 \\
	\ref{case3a} & 5.19 & 5 & 5.00 & 5 & 5.02 & 5 \\
	\ref{case3b} & 4.02 & 4 & 3.00 & 3 & 4.12 & 4 \\
	\ref{case3c} & 7.00 & 7 & 6.00 & 6 & 10.00 & 10 \\
	\ref{case4a} & 6.00 & 6 & 5.00 & 5 & 8.00 & 8 \\
	\ref{case4b} & 7.00 & 7 & 6.00 & 6 & 10.00 & 10 \\
	\ref{case4c} & 7.00 & 7 & 6.00 & 6 & 8.00 & 8 \\
	\ref{case4d} & 4.00 & 4 & 3.00 & 3 & 4.00 & 4 \\
	\bottomrule
	\end{tabular}
	\end{center}
	\end{table}


	\bibliographystyle{plain}
	\bibliography{vtd_ode_seuBiblio}
	
\end{document}